\DeclareFontFamily{OT1}{pzc}{}
\DeclareFontShape{OT1}{pzc}{m}{it}{<-> s * [1.10] pzcmi7t}{}
\DeclareMathAlphabet{\mathpzc}{OT1}{pzc}{m}{it}
\newcommand{\kibitz}[2]{\ifnum\Comments=1\textcolor{#1}{#2}\fi}
\newcommand\Item[1][]{%
  \ifx\relax#1\relax  \item \else \item[#1] \fi
  \abovedisplayskip=0pt\abovedisplayshortskip=0pt~\vspace*{-\baselineskip}}
\newcommand{\GG}[1]{}
\newtheoremstyle{normal}% name
{2ex}               % Space above, empty = `usual value'
{3ex}               % Space below
{}                  % Body font
{}                  % Indent amount (empty = no indent, \parindent = para indent)
\newtheoremstyle{italic}% name
{2ex}%      Space above, empty = `usual value'
{3ex}%      Space below
{\itshape}% Body font
{}%         Indent amount (empty = no indent, \parindent = para indent)
\theoremstyle{normal}
\newtheorem{definition}{Definition}[section]
\newtheorem{remark}[definition]{Remark}
\newtheorem{example}[definition]{Example}
\newtheorem{condition}[definition]{Condition}
\theoremstyle{italic}
\newtheorem{theorem}[definition]{Theorem}
\newtheorem{lemma}[definition]{Lemma}
\newtheorem{proposition}[definition]{Proposition}
\newtheorem{corollary}[definition]{Corollary}
\newcommand\R{\mathbb{R}}
\DeclareMathOperator*{\esssup}{ess\,sup}
\begin{document}

\title{Laws of large numbers for Hayashi-Yoshida-type functionals}

\author{Ole Martin and Mathias Vetter\thanks{Christian-Albrechts-Universit\"at zu Kiel, Mathematisches Seminar, Ludewig-Meyn-Str.\ 4, 24118 Kiel, Germany.
{E-mail:} martin@math.uni-kiel.de/vetter@math.uni-kiel.de} \bigskip \\
{Christian-Albrechts-Universit\"at zu Kiel}
}

\maketitle

\begin{abstract}
In high-frequency statistics and econometrics sums of functionals of increments of stochastic processes are commonly used and statistical inference is based on the asymptotic behaviour of these sums as the mesh of the observation times tends to zero. Inspired by the famous Hayashi-Yoshida estimator for the quadratic covariation process based on two asynchronously observed stochastic processes we investigate similar sums based on increments of two asynchronously observed stochastic processes for general functionals. We find that our results differ from corresponding results in the setting of equidistant and synchronous observations which has been well studied in the literature. Further we observe that in the setting of asynchronous observations the asymptotic behaviour is not only determined by the nature of the functional but also depends crucially on the asymptotics of the observation scheme.  
\end{abstract}

%\medskip
 \textit{Keywords and Phrases:} Asynchronous observations; law of large numbers; high-frequency statistics; It\^o semimartingale

\smallskip

 \textit{AMS Subject Classification:} 60F99, 60G51 (primary); 62G05, 62M09 (secondary)

%\tableofcontents

\section{Introduction}
\def\theequation{1.\arabic{equation}}
\setcounter{equation}{0}

In the field of high-frequency statistics inference on properties of a multidimensional stochastic process $(X_t)_{t \geq 0}$ in continuous time is based on discrete observations $X_{t_{i,n}}$, and asymptotics are studied as the mesh of the observation times $t_{i,n}$, $i \in \mathbb{N}_0$, tends to zero. An important class of statistics in this field are sums of some functional $f$ evaluated at the increments of the stochastic process $X$ over the observation intervals, so 
\begin{align}\label{intro1}
\sum_{i:t_{i,n}\leq T} f(X_{t_{i,n}}-X_{t_{i-1,n}}).
\end{align}  
Such sums occur frequently, both as statistics whose asymptotic behaviour is used for inference and as estimators for asymptotic variances in central limit theorems. For example, if $X$ is a semimartingale and if $f(x)=x_k x_m$ holds it is well known that \eqref{intro1} converges to the quadratic covariation $[X^{(k)},X^{(m)}]_T$ as the mesh of the observation times tends to zero, where $X^{(l)}$ denotes the $l$-th component of $X$; compare e.g.\ Theorem 23 in \cite{Pro04}. For this reason the understanding of the asymptotic behaviour of such sums is of great interest to statisticians working in high-frequency statistics. 

The asymptotics of \eqref{intro1} in the setting of synchronous observation times are well understood. An overview of relevant results can be found in \cite{JacPro12}. However, in applications where a multidimensional stochastic process is investigated different components $X^{(l)}$ of $X$ are usually observed at different times $t_{i,n}^{(l)}$. As methods designed for synchronous data are widely available, people often artificially synchronize the data to be able to use existing results; compare e.g.\ the concept of refresh times in \cite{Barnetal11} and \cite{ASFaXi10}. Not only does this lead to a reduction of the number of used data points and therefore to less efficient methods, but it might also lead to inconsistent estimation as has been shown empirically in \cite{epp79} and has been demonstrated mathematically in \cite{HayYos05}. Therefore it is advantegeous to work with the asynchronous observations directly for which analogous expressions to \eqref{intro1} are needed which solely rely on the increments 
$$X^{(l)}_{t_{i,n}^{(l)}}-X^{(l)}_{t_{i-1,n}^{(l)}}, \quad l=1,\ldots,d.$$ 
In the following we restrict ourselves to the case $d=2$ and observe a bivariate It\^o semimartingales $X$. In order to work directly with asynchronous observations the expression
\begin{align}\label{intro2}
\sum_{i,j:t_{i,n}^{(1)} \vee t_{j,n}^{(2)} \leq T} f(X^{(1)}_{t_{i,n}^{(1)}}-X^{(1)}_{t_{i-1,n}^{(1)}},X^{(2)}_{t_{j,n}^{(2)}}-X^{(2)}_{t_{j-1,n}^{(2)}})\mathds{1}_{\{(t_{i-1,n}^{(1)},t_{i,n}^{(1)}] \cap  (t_{j-1,n}^{(2)},t_{j,n}^{(2)}] \neq\emptyset\}}
\end{align}
was introduced in \cite{HayYos05} for the specific choice $f(x_1,x_2)=x_1x_2$, and they have shown in the continuous case that it converges in probability to $[X^{(1)},X^{(2)}]_T$ as the mesh of the observation times tends to zero. In the case of additional jumps the same result is shown in \cite{BibVet15}, and corresponding central limit theorems are given in \cite{HayYos08} and \cite{BibVet15} as well. In \cite{MarVet18a, MarVet18b} statistical tests for the presence of common jumps in two asynchronously observed stochastic processes are developed based on \eqref{intro2} for $f(x_1,x_2)=(x_1)^2(x_2)^2$. The purpose of this paper is to study the asymptotics of sums of the form \eqref{intro2} for general functions $f:\mathbb{R}^2 \rightarrow \mathbb{R}$ and general asynchronous observation schemes. We allow the process $X$ to be an arbitrary It\^o semimartingale, which is a very general model for stochastic processes, and the observation times $t_{i,n}^{(l)}$, $i \in \mathbb{N}_0$, to be modeled by arbitrary increasing sequences of stopping times for $l=1,2$. 

Our results are surprisingly different compared to the convergence of \eqref{intro1} in the setting of synchronous observation times. Of course, there are some similarities, namely that the limiting variables only depend on the pure jump part of $X$ (with the exception of the quadratic covariation case $f(x_1,x_2)=x_1x_2$) and that they are of the form $\sum_{s \leq T} f(\Delta X_s^{(1)},\Delta X_s^{(2)})$. Also, we need that $f(x_1,x_2)$ vanishes sufficiently fast as $(x_1,x_2)\rightarrow 0$. However, the situation in the asynchronous setting is much more restrictive in both aspects: In general, we obtain convergence of \eqref{intro2} only for functions $f$ where the limit consists solely of common jumps, and $f$ not only has to vanish much faster in a neighbourhood of zero than in the synchronous setting, but it also needs to vanish around the axes of $\R^2$. The conditions on $f$ for \eqref{intro2} to converge can be weakened if we apply further restrictions to the observation scheme which proves that the convergence of \eqref{intro2} in the asynchronous setting does not only depend on the function $f$ but also on the asymptotic behaviour of the observation scheme. These so-called non-normalized functionals \eqref{intro2} may be used to derive inference about the structure of the common jumps of $X^{(1)}$ and $X^{(2)}$. 

In the second part of this paper we derive inference on the common structure of the continuous parts of $X^{(1)}$ and $X^{(2)}$, for which we investigate normalized functionals of the form
\begin{align}\label{intro3}
n^{p/2-1}\sum_{i,j:t_{i,n}^{(1)} \vee t_{j,n}^{(2)} \leq T} f(X^{(1)}_{t_{i,n}^{(1)}}-X^{(1)}_{t_{i-1,n}^{(1)}},X^{(2)}_{t_{j,n}^{(2)}}-X^{(2)}_{t_{j-1,n}^{(2)}})\mathds{1}_{\{(t_{i-1,n}^{(1)},t_{i,n}^{(1)}] \cap  (t_{j-1,n}^{(2)},t_{j,n}^{(2)}] \neq\emptyset\}},
\end{align}
$p \geq 0$. Here $n^{-1}$ represents the rate by which the length of the \glqq average\grqq \space observation interval decreases as $n$ tends to infinity. Contrary to the jump part the continuous martingale part of $X$ is not scale-invariant but the increments scale with the square root of the length of the observation interval. Therefore we normalize the sum with the factor $n^{p/2-1}$, where $p$ depends on the function $f$, in order to obtain convergence to a limit which depends on the continuous martingale part. As in the setting of synchronous observation times we achieve convergence for positively homogeneous functions $f$. However we are only able to state the explicit form of the limit for specific positively homogeneous functions $f$ as it is in general not possible to disentagle the contribution of the continuous martingale part and the contribution of the asymptotics of the observation scheme in the limit.

The remainder of this paper is organized as follows: In Section \ref{sec:framework} we introduce the mathematical model for the process $X$ and the observation schemes and discuss necessary structural conditions. In Section \ref{sec:nonnormFunc} we investigate under which conditions on $f$ and the observation scheme we obtain convergence of the non-normalized functionals \eqref{intro2}. In Section \ref{sec:normFunc} we similarly examine the asymptotics of the normalized functionals \eqref{intro3}. Throughout the paper we draw comparisons to the corresponding results in the setting of synchronous observation times and try to point out where additional challenges arise due to the asynchronicity of the observation scheme and how to deal with them. In Section \ref{sec:outlook} we discuss directions of future research. All proofs are gathered in Section \ref{sec:proofs}.

\section{Framework and motivation}\label{sec:framework}
\def\theequation{2.\arabic{equation}}
\setcounter{equation}{0}

Throughout this paper we consider the following model for the process and the observation times: Let $X =(X^{(1)},X^{(2)})$ be a two-dimensional It\^o semimartingale on $(\Omega,\mathcal{F},\mathbb{P})$ of the form
\begin{multline}
\label{ItoSemimart}
X_t = X_0 + \int \limits_0^t b_s ds + \int \limits_0^t \sigma_s dW_s + \int \limits_0^t \int \limits_{\R^2} \delta(s,z)\mathds{1}_{\{\|\delta(s,z)\|\leq 1\}} (\mu - \nu)(ds,dz) \\
+ \int \limits_0^t \int \limits_{\R^2}  \delta(s,z) \mathds{1}_{\{\|\delta(s,z)\|> 1\}} \mu(ds,dz),
\end{multline}
where $W$ is a two-dimensional standard Brownian motion, $\mu$ is a Poisson random measure on $\mathbb{R}^+ \times\mathbb{R}^2$, whose predictable compensator satisfies \mbox{$\nu(ds,dz)=ds \otimes \lambda(dz)$} for some $\sigma$-finite measure $\lambda$ on $\mathbb{R}^2$ endowed with the Borelian $\sigma$-algebra. $b$ is a two-dimensional adapted process, $\sigma$ is a $2 \times 2$ adapted process of the form
\begin{align*} 
\sigma_s= \begin{pmatrix}
\sigma_s^{(1)} &0
\\ \rho_s \sigma_s^{(2)} &\sqrt{1-\rho_s^2} \sigma_s^{(2)}
\end{pmatrix}
\end{align*}
for non-negative adapted processes $\sigma_s^{(1)}$, $\sigma_s^{(2)}$ and an adapted process $\rho_s$ with values in the interval $[-1,1]$. $\delta$ is a two-dimensional predictable function on $\Omega \times \mathbb{R}^+ \times \mathbb{R}^2$. We write $\Delta X_s=X_s-X_{s-}$ with $X_{s-}=\lim_{t \nearrow s} X_t$ for a possible jump of $X$ in $s$. By $\| \cdot \|$ we will always denote the Euclidean norm.

The processes $X^{(l)}$, $l=1,2$, are observed at times $t_{i,n}^{(l)}$, $l=1,2$, and we denote the observation scheme by
\[
\pi_n =\big\{\big(t_{i,n}^{(1)} \big)_{i \in \mathbb{N}_0},\big(t_{i,n}^{(2)} \big)_{i \in \mathbb{N}_0}  \big\}, \quad n \in \mathbb{N},
\]
where $\big(t_{i,n}^{(l)}\big)_{i \in \mathbb{N}_0},~ l=1,2,$ are increasing sequences of stopping times with $t_{0,n}^{(l)}=0$. 
\begin{figure}[b]\label{fig:obs_sch}
\centering
\hspace{-0.6cm}
\begin{tikzpicture}
\draw (0,1.75) -- (11,1.75)
      (0,-0.75) -- (11,-0.75)
      (0,1.5) -- (0,2)
		(1.9,1.5) -- (1.9,2)
		(5,1.5) -- (5,2)
		(7.3,1.5) -- (7.3,2)
		
		(10.8,1.5) -- (10.8,2)
		(0,-0.5) -- (0,-1)
		(2.3,-0.5) -- (2.3,-1)
		(5.7,-0.5) -- (5.7,-1)
		(8,-0.5) -- (8,-1)
		
		(10.3,-0.5) -- (10.3,-1);
\draw[dashed] (11,1.75) -- (12,1.75)
      (11,-0.75) -- (12,-0.75);
\draw[very thick] (9.5,-1.4) -- (9.5,0.25)
      (9.5,0.8) -- (9.5,2.4);
\draw	(0,0.5) node{$t_{0,n}^{(1)}=t_{0,n}^{(2)}=0$}
		(1.9,1) node{$t_{1,n}^{(1)}$}
		(5,1) node{$t_{2,n}^{(1)}$}
		(7.3,1) node{$t_{3,n}^{(1)}$}
		(11,1) node{$t_{4,n}^{(1)}$}
		(9.5,0.5) node{\textbf{$T$}}
		(2.3,0) node{$t_{1,n}^{(2)}$}
		(5.7,0) node{$t_{2,n}^{(2)}$}
		(8,0) node{$t_{3,n}^{(2)}$}
		(10.3,0) node{$t_{4,n}^{(2)}$};
\draw   (0,1.75) node[left,xshift=-0pt]{$X^{(1)}$}
(0,-0.75) node[left,xshift=-0pt]{$X^{(2)}$};
\draw[decorate,decoration={brace,amplitude=12pt}]
	(0,2)--(1.9,2) node[midway, above,yshift=10pt,]{$\leq|\pi_n|_T$};\draw[decorate,decoration={brace,amplitude=12pt}]
	(1.9,2)--(5,2) node[midway, above,yshift=10pt,]{$\leq|\pi_n|_T$};\draw[decorate,decoration={brace,amplitude=12pt}]
	(5,2)--(7.3,2) node[midway, above,yshift=10pt,]{$\leq|\pi_n|_T$};\draw[decorate,decoration={brace,amplitude=12pt}]
	(7.3,2)--(9.5,2) node[midway, above,yshift=10pt,]{$\leq|\pi_n|_T$};	
\draw[decorate,decoration={brace,amplitude=12pt}]
	(9.5,-1)--(8,-1) node[midway, below,yshift=-10pt,]{$\leq|\pi_n|_T$};
	\draw[decorate,decoration={brace,amplitude=12pt}]
	(8,-1)--(5.7,-1) node[midway, below,yshift=-10pt,]{$\leq|\pi_n|_T$};
	\draw[decorate,decoration={brace,amplitude=12pt}]
	(5.7,-1)--(2.3,-1) node[midway, below,yshift=-10pt,]{$\leq|\pi_n|_T$};
	\draw[decorate,decoration={brace,amplitude=12pt}]
	(2.3,-1)--(0,-1) node[midway, below,yshift=-10pt,]{$\leq|\pi_n|_T$};
\end{tikzpicture}
\caption{A realization of the observation scheme $\pi_n$ restricted to $[0,T]$.}
\end{figure}
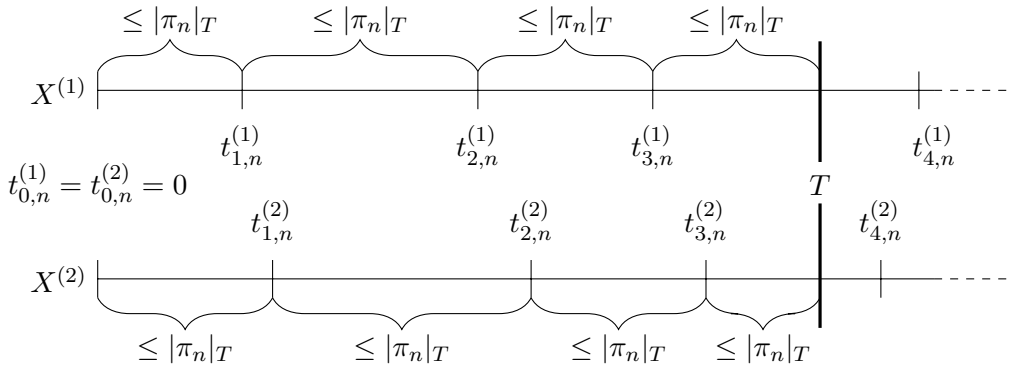
By
\[
|\pi_n|_T=\sup \big\{t_{i,n}^{(l)} \wedge T -t_{i-1,n}^{(l)} \wedge T \big|i\geq 1,~ l=1,2   \big\}
\]
we denote the mesh of the observation times up to $T$. All our test statistics are based on the increments
\[
\Delta_{i,n}^{(l)} X= X_{t_{i,n}^{(l)}}-X_{t_{i-1,n}^{(l)}}, \quad i \geq 1, \quad l=1,2,
\]
and we denote by $\mathcal{I}_{i,n}^{(l)}=\big(t_{i-1,n}^{(l)},t_{i,n}^{(l)}\big],~l=1,2,$ the corresponding observation intervals. Further we denote by $|A|$ the Lebesgue measure of a set $A \subset [0,\infty)$.

\begin{definition}\label{def_exo}
Let $\mathcal{S}=\sigma(\pi_n, n \in \mathbb{N})$ denote the $\sigma$-algebra generated by the observation scheme and $\mathcal{X}=\sigma(X,b,\sigma,\delta,W,\mu)$ denote the $\sigma$-algebra generated by the process $X$ and its components. We call an observation scheme $(\pi_n)_{n \in \mathbb{N}}$ \textit{exogenous} if the observation scheme and the process $X$ are independent, i.e.\ if $\mathcal{S}$ and $\mathcal{X}$ are independent. \qed
\end{definition}

Throughout this paper we impose the following structural assumptions on the It\^o semimartingale \eqref{ItoSemimart} and the observation scheme.

\begin{condition}\label{cond_consistency}
The process $b_t$ is locally bounded and the processes $\sigma_t^{(1)},\sigma_t^{(2)},\rho_t$ are c\`adl\`ag. Furthermore, it holds $\|\delta(\omega,t,z)\| \leq \Gamma_t(\omega) \gamma(z)$ for some locally bounded process $\Gamma_t$ and some deterministic bounded function $\gamma$ which satisfies $\int ( \gamma(z)^2 \wedge 1)\lambda(dz) < \infty.$ The sequence of observation schemes $(\pi_n)_n$ fulfills $|\pi_n|_T \overset{\mathbb{P}}{\longrightarrow} 0$. \qed
\end{condition}

The assumptions made in Condition \ref{cond_consistency} on the components of the process $X$ are not very restrictive and appear in similar form elsewhere in the literature; compare e.g.\ assumption (H) in \cite{JacPro12} or \cite{JacTod09}. These assumptions are fulfilled in most applications e.g.\ in the field of mathematical finance. The assumption $|\pi_n|_T \overset{\mathbb{P}}{\longrightarrow} 0$ on the observation scheme is a minimal requirement to be able to infer properties of the paths $t \mapsto X_t(\omega)$, $t \in [0,T]$, in the limit as $n \rightarrow \infty$. The investigation of properties of the process $X$ based on observations whose mesh decreases to zero characterizes the field of high-frequency statistics.

\section{Non-normalized functionals}\label{sec:nonnormFunc}
\def\theequation{3.\arabic{equation}}
\setcounter{equation}{0}

First note that when considering functionals of the form \eqref{intro1} in the setting of asynchronous observation times it is not straightforward anymore for which pairs of increments $(\Delta_{i,n}^{(1)}X^{(1)},\Delta_{j,n}^{(2)}X^{(2)})$ the evaluation of the function $f$ should be included in the sum. The idea utilized by \cite{HayYos05} is to include $f(\Delta_{i,n}^{(1)}X^{(1)},\Delta_{j,n}^{(2)}X^{(2)})$ if and only if the observation intervals $\mathcal{I}_{i,n}^{(1)}$, $\mathcal{I}_{j,n}^{(2)}$ overlap. In this case a consistent estimator for the quadratic covariation of $[X^{(1)},X^{(2)}]_T$ is obtained by using the function $f(x_1,x_2)=x_1 x_2$ also in the setting of asynchronous observations, i.e.\ they showed
\begin{align}\label{hayyos_est}
\sum_{i,j:t_{i,n}^{(1)}\vee t_{j,n}^{(2)} \leq T} \Delta_{i,n}^{(1)}X^{(1)} \Delta_{j,n}^{(2)}X^{(2)} \mathds{1}_{\{\mathcal{I}_{i,n}^{(1)} \cap \mathcal{I}_{j,n}^{(2)} \neq \emptyset\}} \overset{\mathbb{P}}{\longrightarrow} [X^{(1)},X^{(2)}]_T
\end{align}
in the case of a continuous It\^o semimartingale $X$. The extension to processes including jumps was later given in \cite{BibVet15}. The structure of the sum is illustrated in Figure \ref{hayyos_overlap}.

\begin{figure}[b]
\centering
\hspace{-0.6cm}
\begin{tikzpicture}
\draw   (0.5,1) -- (11,1)
		(0.5,0.7) -- (0.5,1.3)
		(2.1,0.7) -- (2.1,1.3)
		(5.5,0.7) -- (5.5,1.3)
		(7,0.7) -- (7,1.3)
		(8.9,0.7) -- (8.9,1.3)
		(11,0.7) -- (11,1.3)
		(0.5,-1) -- (10.5,-1)
		(0.5,-0.7) -- (0.5,-1.3)
		(3.8,-0.7) -- (3.8,-1.3)
		(5.5,-0.7) -- (5.5,-1.3)
		(10.5,-0.7) -- (10.5,-1.3);
\draw[dashed,blue] (1.3,1) -- (2.15,-1) -- (3.8,1) -- (4.65,-1) 
		(7.2,-1) -- (6.25,1)
		(7.2,-1) -- (7.95,1)
		(7.2,-1) -- (9.95,1);
\draw[dashed] (11,1)--(12.5,1)
		(10.5,-1) -- (12.5,-1);

\draw[decorate,decoration={brace,amplitude=12pt}]
	(0.5,1.35)--(2.1,1.35) node[midway, above,yshift=10pt,]{$| \mathcal{I}_{1,n}^{(1)}|$};
\draw[decorate,decoration={brace,amplitude=12pt}]
	(2.1,1.35)--(5.5,1.35) node[midway, above,yshift=10pt,]{$| \mathcal{I}_{2,n}^{(1)}|$};
\draw[decorate,decoration={brace,amplitude=12pt}]
	(5.5,1.35)--(7,1.35) node[midway, above,yshift=10pt,]{$| \mathcal{I}_{3,n}^{(1)}|$};
\draw[decorate,decoration={brace,amplitude=12pt}]
	(7,1.35)--(8.9,1.35) node[midway, above,yshift=10pt,]{$| \mathcal{I}_{4,n}^{(1)}|$};
\draw[decorate,decoration={brace,amplitude=12pt}]
	(8.9,1.35)--(11,1.35) node[midway, above,yshift=10pt,]{$| \mathcal{I}_{5,n}^{(1)}|$};
\draw[decorate,decoration={brace,amplitude=12pt}]
	(10.5,-1.35)--(5.5,-1.35) node[midway, below,yshift=-10pt,]{$| \mathcal{I}_{3,n}^{(2)}|$};
\draw[decorate,decoration={brace,amplitude=12pt}]
	(5.5,-1.35)--(3.8,-1.35) node[midway, below,yshift=-10pt,]{$| \mathcal{I}_{2,n}^{(2)}|$};
\draw[decorate,decoration={brace,amplitude=12pt}]
	(3.8,-1.35)--(0.5,-1.35) node[midway, below,yshift=-10pt,]{$| \mathcal{I}_{1,n}^{(2)}|$};
	
\draw[decorate,decoration={brace,amplitude=8pt}]
	(2.1,0.65)--(0.5,0.65) node[midway, below,yshift=-10pt,]{\tiny{$\big| \mathcal{I}_{1,n}^{(1)}\cap \mathcal{I}_{1,n}^{(2)}\big|$}};	
	\draw[decorate,decoration={brace,amplitude=8pt}]
	(3.8,0.65)--(2.1,0.65) node[midway, below,yshift=-10pt,]{\tiny{$\big| \mathcal{I}_{2,n}^{(1)}\cap \mathcal{I}_{1,n}^{(2)}\big|$}};
	\draw[decorate,decoration={brace,amplitude=8pt}]
	(5.5,0.65)--(3.8,0.65) node[midway, below,yshift=-10pt,]{\tiny{$\big| \mathcal{I}_{2,n}^{(1)}\cap \mathcal{I}_{2,n}^{(2)}\big|$}};
	\draw[decorate,decoration={brace,amplitude=8pt}]
	(7,0.65)--(5.5,0.65) node[midway, below,yshift=-10pt,]{\tiny{$\big| \mathcal{I}_{3,n}^{(1)}\cap \mathcal{I}_{3,n}^{(2)}\big|$}};
	\draw[decorate,decoration={brace,amplitude=8pt}]
	(8.9,0.65)--(7,0.65) node[midway, below,yshift=-10pt,]{\tiny{$\big| \mathcal{I}_{4,n}^{(1)}\cap \mathcal{I}_{3,n}^{(2)}\big|$}};
	\draw[decorate,decoration={brace,amplitude=8pt}]
	(10.5,0.65)--(8.9,0.65) node[midway, below,yshift=-10pt,]{\tiny{$\big| \mathcal{I}_{5,n}^{(1)}\cap \mathcal{I}_{3,n}^{(2)}\big|$}};
	
\draw (0.5,1) node[left,xshift=-2pt]{$X^{(1)}$}
(0.5,-1) node[left,xshift=-2pt]{$X^{(2)}$};
\end{tikzpicture}
\caption{All products of increments of $X^{(1)}$ and $X^{(2)}$ over intersecting intervals enter the estimation of $[X^{(1)},X^{(2)}]_T$.}\label{hayyos_overlap}
\end{figure}
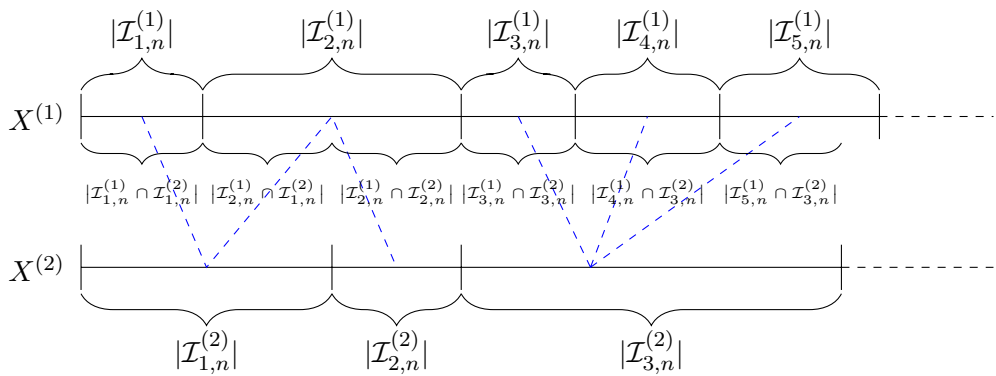

In the style of this famous \textit{Hayashi-Yoshida estimator for the quadratic covariation} we define 
\[
V(f,\pi_n)_T=\sum_{i,j:t_{i,n}^{(1)} \vee t_{j,n}^{(2)}\leq T} f(\Delta_{i,n}^{(1)} X^{(1)},\Delta_{j,n}^{(2)} X^{(2)} )\mathds{1}_{\{\mathcal{I}_{i,n}^{(1)} \cap \mathcal{I}_{j,n}^{(2)}\neq \emptyset \}}
\]
for functions $f:\R^2 \rightarrow \R$. We will see that these functionals converge to similar limits as the functionals \eqref{intro1} in the setting of synchronous observation times for a large class of functions $f$, and not only for $f(x_1,x_2)=x_1x_2$ as in the case of the Hayashi-Yoshida estimator. 

Further we define
\[
V^{(l)}(g,\pi_n)_T=\sum_{i:t_{i,n}^{(l)}\leq T} g(\Delta_{i,n}^{(l)} X^{(l)}), \quad l=1,2,
\]
for functions $g:\R \rightarrow \R$. We will also state an asymptotic result for $V^{(l)}(g,\pi_n)_T$ to compare the results in the setting of asynchronously observed bivariate processes to those in simpler settings.

To describe the limits of the functionals $V(f,\pi_n)_T$ and $V^{(l)}(g,\pi_n)_T$ we denote
\begin{align*}
&B(f)_T=\sum_{s \leq T} f(\Delta X^{(1)}_s,\Delta X^{(2)}_s),
\\&B^*(f)_T=\sum_{s \leq T} f(\Delta X^{(1)}_s,\Delta X^{(2)}_s)\mathds{1}_{\{\Delta X^{(1)}_s  \Delta X^{(2)}_s \neq 0\}},
\\&B^{(l)}(g)_T=\sum_{s \leq T} g(\Delta X^{(l)}_s),~~~l=1,2,
\end{align*}
for functions $f:\R^2 \rightarrow \R$ and $g:\R \rightarrow \R$ for which the sums are well-defined.\\

Using this notation we are now able to state the results. In the setting of synchronous observation times 
\begin{align}\label{synch_times}
t_{i,n}=t_{i,n}^{(1)}=t_{i,n}^{(2)},\quad  i \in \mathbb{N}_0,~n\in \mathbb{N},
\end{align}
the functional $V(f,\pi_n)_T$ coincides with the classical statistic \eqref{intro1} and hence the convergence of $V(f,\pi_n)_T$ in this situation follows from Theorem 3.3.1 of \cite{JacPro12} which we state below.

\begin{theorem}\label{general_nonnormFunc_synch}
Suppose that the observation scheme is synchronous and that Condition \ref{cond_consistency} holds. Then we have 
\begin{align}\label{synch_cons}
V(f,\pi_n)_T \overset{\mathbb{P}}{\longrightarrow} B(f)_T
\end{align}
for all continuous functions $f:\R^2 \rightarrow \R$ with $f(x)=o(\|x\|^2)$ as $x \rightarrow 0$.
\end{theorem}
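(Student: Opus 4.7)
In the synchronous setting $t_{i,n}^{(1)}=t_{i,n}^{(2)}=t_{i,n}$, so the overlap indicator $\ind_{\{\mathcal{I}_{i,n}^{(1)}\cap\mathcal{I}_{j,n}^{(2)}\neq\emptyset\}}$ reduces the double sum to the diagonal and
\[
V(f,\pi_n)_T=\sum_{i:t_{i,n}\leq T} f(\Delta_{i,n}^{(1)}X^{(1)},\Delta_{i,n}^{(2)}X^{(2)}).
\]
My plan follows the standard \emph{big jump / small jump} decomposition. A routine localization first allows one to strengthen Condition \ref{cond_consistency} to uniform boundedness of $b_t,\sigma_t,\Gamma_t$ and to $\int \gamma(z)^2\lambda(dz)<\infty$; since the conclusion is a convergence in probability, localization is harmless.

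For $\eps>0$ I write $X=X(\eps)+Y(\eps)$, where $X(\eps)$ is the compound Poisson process collecting the finitely many jumps of $X$ on $[0,T]$ with $\|\Delta X_s\|>\eps$, at jump times $\tau_1,\ldots,\tau_{N(\eps)}$, and $Y(\eps)$ is an It\^o semimartingale whose jumps are bounded by $\eps$. With probability tending to one each $\tau_k$ lies in the interior of a distinct observation interval $\mathcal{I}_{i_k(n),n}$, on which $\Delta_{i_k(n),n}X=\Delta X_{\tau_k}+\Delta_{i_k(n),n}Y(\eps)$. Since $Y(\eps)$ has no jump at $\tau_k$ (the whole jump of $X$ at $\tau_k$ belongs to $X(\eps)$) and $|\pi_n|_T\pto 0$, one gets $\Delta_{i_k(n),n}Y(\eps)\pto 0$, so by continuity of $f$
\[
\sum_{k=1}^{N(\eps)} f(\Delta_{i_k(n),n}^{(1)}X^{(1)},\Delta_{i_k(n),n}^{(2)}X^{(2)})\pto \sum_{s\leq T}f(\Delta X_s^{(1)},\Delta X_s^{(2)})\ind_{\{\|\Delta X_s\|>\eps\}}.
\]
The right-hand side converges a.s.\ to $B(f)_T$ as $\eps\downarrow 0$, by dominated convergence using $f(x)=o(\|x\|^2)$ together with $\sum_{s\leq T}\|\Delta X_s\|^2<\infty$.

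It remains to show that the contribution of the remaining intervals (those containing no $\tau_k$), where $\Delta_{i,n}X=\Delta_{i,n}Y(\eps)$, is negligible. Given $\eta>0$, pick $\delta\in(0,\eps]$ such that $|f(x)|\leq\eta\|x\|^2$ for $\|x\|\leq\delta$. Under the strengthened Condition \ref{cond_consistency} the standard increment estimate $\Exp[\|\Delta_{i,n}Y(\eps)\|^2\mid\mathcal{F}_{t_{i-1,n}}]\leq K(t_{i,n}-t_{i-1,n})$ holds with a deterministic constant $K$, so the contribution on $\{\|\Delta_{i,n}Y(\eps)\|\leq\delta\}$ is bounded in expectation by $\eta KT$. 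For the complementary event one writes $|f(x)|\leq\omega(\|x\|)\|x\|^2$ with $\omega$ continuous and $\omega(0)=0$ (which exists because $f$ is continuous with $f(x)=o(\|x\|^2)$), so that this contribution is bounded by $\omega(M)\cdot\sum_i\|\Delta_{i,n}Y(\eps)\|^2$ on the event $\{\max_i\|\Delta_{i,n}Y(\eps)\|\leq M\}$; the sum is $O_{\Prob}(1)$ by the $L^2$-bound, and $\max_i\|\Delta_{i,n}Y(\eps)\|\pto 0$ as $n\to\infty$ (for $\eps$ fixed), while $\omega(M)\to 0$ as $M\to 0$. Taking successive limits $n\to\infty$, $\eta\downarrow 0$, $\eps\downarrow 0$ completes the argument.

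The main obstacle is precisely the last control of the \emph{small jump remainder}: the pointwise quadratic bound on $f$ only holds near the origin, so one has to combine the uniform $L^2$-estimate on the increments of $Y(\eps)$ with the fact that $\max_i\|\Delta_{i,n}Y(\eps)\|$ becomes small — which requires the localized boundedness of the characteristics in a crucial way. Once this is established, the rest is a routine assembly of the big-jump limit and the vanishing remainder.
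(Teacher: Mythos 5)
The paper never proves this statement itself: it imports it verbatim from Theorem 3.3.1 of Jacod and Protter, so your task was effectively to reconstruct that proof, and your skeleton --- localization, a big-jump/small-jump splitting at level $\eps$, a pathwise treatment of the finitely many big jumps, and an $L^2$ control of the remainder via a bound $|f(x)|\le\omega(\|x\|)\|x\|^2$ with $\omega$ nondecreasing and $\omega(0+)=0$ --- is exactly the right one and is the same mechanism the paper itself uses later (e.g.\ in the proof of Lemma \ref{lemma_vanish}). The big-jump half of your argument is correct as written.

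The gap is in the control of the small-jump remainder. You assert that $\max_i\|\Delta_{i,n}Y(\eps)\|\pto 0$ as $n\to\infty$ for fixed $\eps$, and you use this to drive $\omega(M)$ to zero. This is false whenever $X$ has infinitely many jumps: $Y(\eps)$ still carries every jump of size up to $\eps$, and the observation interval straddling such a jump has an increment of roughly that size no matter how fine the grid, so $\max_i\|\Delta_{i,n}Y(\eps)\|$ stays bounded away from $0$ in probability. The correct statement is only $\limsup_{n}\max_i\|\Delta_{i,n}Y(\eps)\|\le\sup_{s\le T}\|\Delta Y(\eps)_s\|\le\eps$ almost surely, and this follows from the c\`adl\`ag modulus-of-continuity argument (the same device used in the proof of Lemma \ref{lemma_vanish}), not from the boundedness of the characteristics as you suggest. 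For the same reason your split at level $\delta\le\eps$ leaves a complementary event $\{\|\Delta_{i,n}Y(\eps)\|>\delta\}$ that does not become negligible as $n\to\infty$. The repair is standard and cheap: on the event $\{\max_i\|\Delta_{i,n}Y(\eps)\|\le 2\eps\}$, whose probability tends to one for fixed $\eps$, the entire small-jump contribution is bounded by $\omega(2\eps)\sum_i\|\Delta_{i,n}Y(\eps)\|^2=\omega(2\eps)\,O_{\Prob}(1)$, and this is sent to zero in the final limit $\eps\downarrow 0$ rather than in an inner limit in $M$. With that modification (and the routine extension of the $L^2$ increment estimate to stopping-time observation grids, which you use implicitly) your proof closes.
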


Actually the statement in Theorem 3.3.1 of \cite{JacPro12} holds for general $d$-dimensional It\^o semimartingales and all functions ${f:\mathbb{R}^d \rightarrow \R}$ with ${f(x)=o(\|x\|^2)}$ for any $d \in \mathbb{N}$. The case $d=1$ then yields the convergence for the functionals $V^{(l)}(g,\pi_n)_T$ stated in the following corollary. 

\begin{corollary}\label{nonnormFunc_onedim}
Under Condition \ref{cond_consistency} we have 
\begin{align}\label{onedim_cons}
V^{(l)}(g,\pi_n)_T \overset{\mathbb{P}}{\longrightarrow} B^{(l)}(g)_T
\end{align}
for all continuous functions $g:\R \rightarrow \R$ with $g(x)=o(x^2)$ as $x \rightarrow 0$.
\end{corollary}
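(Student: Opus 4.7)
The plan is to simply apply Theorem 3.3.1 of \cite{JacPro12} to the one-dimensional component process $X^{(l)}$. The key observation is that $V^{(l)}(g,\pi_n)_T$ involves only a single process and a single sequence of observation times, so there is no genuine two-dimensional or asynchronous structure to handle; the statement is a direct univariate specialization of the synchronous result quoted in Theorem \ref{general_nonnormFunc_synch}.

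First I would verify that the component $X^{(l)}$ is itself a one-dimensional It\^o semimartingale satisfying the analogue of Condition \ref{cond_consistency}. Writing out \eqref{ItoSemimart} coordinatewise, $X^{(l)}$ has drift $b^{(l)}_s$, diffusion coefficient given by the $l$-th row of $\sigma_s$, and jump coefficient $\delta^{(l)}(s,z)$, the $l$-th component of $\delta$. All boundedness and path-regularity assumptions imposed on $(b,\sigma,\rho,\delta,\Gamma,\gamma)$ in Condition \ref{cond_consistency} are inherited by the one-dimensional components, so $X^{(l)}$ fits the framework of \cite{JacPro12} in dimension one.

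Next I would check the observation scheme side. The sequence $\bigl(t_{i,n}^{(l)}\bigr)_{i\in\mathbb{N}_0}$ consists of increasing stopping times with $t_{0,n}^{(l)}=0$, and its mesh up to $T$ is bounded above by $|\pi_n|_T$, which by Condition \ref{cond_consistency} tends to zero in probability. Hence the assumption on the one-dimensional observation mesh in Theorem 3.3.1 of \cite{JacPro12} is satisfied.

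With these two verifications in place, the $d=1$ version of Theorem 3.3.1 of \cite{JacPro12}, applied to the process $X^{(l)}$ and the observation times $\bigl(t_{i,n}^{(l)}\bigr)_{i\in\mathbb{N}_0}$, yields exactly the claimed convergence \eqref{onedim_cons} for every continuous $g:\R\to\R$ with $g(x)=o(x^2)$ as $x\to 0$. There is no substantive obstacle here: the corollary is included to make explicit what the quoted synchronous result delivers in the trivial univariate case, for later comparison with the genuinely two-dimensional asynchronous results in the sections that follow.
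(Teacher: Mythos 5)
Your proposal is correct and matches the paper's own argument: the corollary is obtained exactly by applying the $d=1$ case of Theorem 3.3.1 of \cite{JacPro12} to the component $X^{(l)}$ observed along $\bigl(t_{i,n}^{(l)}\bigr)_{i\in\mathbb{N}_0}$, whose mesh is dominated by $|\pi_n|_T\overset{\mathbb{P}}{\longrightarrow}0$. The verifications you spell out (that the coordinatewise coefficients inherit the assumptions of Condition \ref{cond_consistency}) are exactly what the paper leaves implicit.
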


The following theorem states the most general result which can be obtained if the convergence \eqref{cons_22} is supposed to hold for arbitrary It\^o semimartingales and any asynchronous observation scheme.

\begin{theorem}\label{general_22}
Under Condition \ref{cond_consistency} we have
\begin{align}\label{cons_22}
V(f,\pi_n)_T \overset{\mathbb{P}}{\longrightarrow} B^*(f)_T
\end{align}
for all continuous functions $f: \mathbb{R}^2 \rightarrow \mathbb{R}$ with $f(x,y)=O(x^2y^2)$ as $|xy|\rightarrow 0$. 
\end{theorem}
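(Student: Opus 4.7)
My strategy is a jump-threshold decomposition argument adapted to the asynchronous setting. After a standard \emph{localization} reducing $b$, $\sigma$, and $\Gamma$ (and hence $X$ on $[0,T]$) to bounded processes, I fix $\eta>0$ and write $X = X(\eta) + X'(\eta)$, where $X(\eta)_t = \int_0^t\!\int_{\{\|\delta(s,z)\|>\eta\}}\delta(s,z)\,\mu(ds,dz)$ has only finitely many jumps on $[0,T]$, while $X'(\eta)$ (drift $+$ Brownian part $+$ compensated small jumps) obeys the classical moment bound $\Exp[|\Delta X'(\eta)^{(l)}_{\mathcal{I}}|^p] \leq C_p(|\mathcal{I}|^{p/2} + \phi_p(\eta)|\mathcal{I}|)$ for $p\geq 2$, with $\phi_p(\eta)\to 0$ as $\eta\to 0$ when $p>2$.

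For $n$ so large that each observation interval contains at most one jump of $X(\eta)$ and distinct big-jump sites are more than $2|\pi_n|_T$ apart, I classify each overlapping pair $(i,j)$ by whether $X^{(1)}(\eta)$ jumps in $\mathcal{I}_{i,n}^{(1)}$ and whether $X^{(2)}(\eta)$ jumps in $\mathcal{I}_{j,n}^{(2)}$. When \emph{both} jump, the pair is forced by a common jump site $s$, is unique, and the two increments converge to $\Delta X^{(l)}_s$; by continuity of $f$ this part contributes $B^*_\eta(f)_T := \sum_{s\leq T,\;\min_l|\Delta X_s^{(l)}|>\eta} f(\Delta X^{(1)}_s,\Delta X^{(2)}_s)$, which tends to $B^*(f)_T$ as $\eta\to 0$ by dominated convergence (the omitted small common jumps satisfy $|f|\leq C(\Delta X^{(1)}_s)^2(\Delta X^{(2)}_s)^2$, summable a.s.\ since $\sum_s (\Delta X^{(l)}_s)^2 < \infty$). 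When \emph{only one side} jumps, the big-jump side involves only finitely many indices, while the other factor is killed by $\sum_{j:\,\mathcal{I}_{j,n}^{(2)}\cap\mathcal{I}_{i,n}^{(1)}\neq\emptyset} \Exp[(\Delta_{j,n}^{(2)} X'(\eta))^2] \leq C(|\mathcal{I}_{i,n}^{(1)}|+2|\pi_n|_T) = O(|\pi_n|_T)$.

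The delicate case is when \emph{neither} side has a big jump. Using the hypothesis $f(x,y)=O(x^2 y^2)$, I split $|f(x,y)| \leq C x^2 y^2\mathds{1}_{|xy|<\delta} + M\mathds{1}_{|xy|\geq\delta}$ (with $M$ the bound of $f$ on the compact where the localized increments live) and reduce the problem to showing
\[
\textstyle\sum_{(i,j):\,\mathcal{I}_{i,n}^{(1)}\cap\mathcal{I}_{j,n}^{(2)}\neq\emptyset}\Exp\bigl[(\Delta_{i,n}^{(1)} X'(\eta))^2(\Delta_{j,n}^{(2)} X'(\eta))^2\bigr] \longrightarrow 0;
\]
the tail $\mathds{1}_{|xy|\geq\delta}$ is then handled by Chebyshev applied to the same expectation. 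For the Brownian contribution, conditioning on the volatility filtration and applying Isserlis's formula yields $\Exp[A^2 B^2]\leq C(|\mathcal{I}_{i,n}^{(1)}|\,|\mathcal{I}_{j,n}^{(2)}| + |\mathcal{I}_{i,n}^{(1)}\cap\mathcal{I}_{j,n}^{(2)}|^2)$; the small-jump and drift contributions are controlled analogously, picking up factors $\phi_p(\eta)$. I then apply the two asynchronous combinatorial identities
\begin{align*}
\sum_{(i,j):\,\text{overlap}}|\mathcal{I}_{i,n}^{(1)}|\,|\mathcal{I}_{j,n}^{(2)}| \leq 3|\pi_n|_T(T+|\pi_n|_T), \qquad \sum_{(i,j):\,\text{overlap}}|\mathcal{I}_{i,n}^{(1)}\cap\mathcal{I}_{j,n}^{(2)}|^2 \leq |\pi_n|_T\,T,
\end{align*}
both consequences of the elementary observation that for fixed $i$ the overlapping $\mathcal{I}_{j,n}^{(2)}$'s fit inside an interval of length $|\mathcal{I}_{i,n}^{(1)}|+2|\pi_n|_T$, together with $\sum_{(i,j)}|\mathcal{I}_{i,n}^{(1)}\cap\mathcal{I}_{j,n}^{(2)}|\leq T$.

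\textbf{Expected main obstacle.} The real work is precisely this no-big-jump case: a single observation interval on one side can overlap arbitrarily many on the other, and the brute-force count of overlapping pairs is not controlled by $|\pi_n|_T$. The proof succeeds because the combinatorial identities above exploit the length structure rather than the count of intervals, and because the $O(x^2 y^2)$ decay of $f$ — strictly stronger than the synchronous $o(\|x\|^2)$ of Theorem~\ref{general_nonnormFunc_synch} — is what makes the resulting double sum summable at all. This matches the informal comment in the introduction that, in the asynchronous regime, $f$ must vanish both rapidly at the origin and around the coordinate axes.
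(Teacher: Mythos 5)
Your overall architecture (localization, isolate big jumps, control the remainder through second/fourth moments combined with the overlap identities $\sum_{i,j}|\mathcal{I}_{i,n}^{(1)}||\mathcal{I}_{j,n}^{(2)}|\mathds{1}_{\{\text{overlap}\}}\leq 3|\pi_n|_T T$ and $\sum_{i,j}|\mathcal{I}_{i,n}^{(1)}\cap\mathcal{I}_{j,n}^{(2)}|^2\leq|\pi_n|_T T$) is close in spirit to the paper, which however truncates the \emph{function} rather than the jump measure: it sets $f_\rho(x,y)=f(x,y)\psi(|xy|/\rho)$, proves the result first for functions vanishing on $\{|xy|<\rho\}$ (Lemma \ref{lemma_vanish}, a pathwise argument), and then controls $V(f,\pi_n)_T-V(f_\rho,\pi_n)_T$ through the bound $2K_\rho x^2y^2\mathds{1}_{\{|xy|<\rho\}}$, citing the proof of (A.6) in \cite{MarVet18a} for the $B(q)+C+M(q)$ part and giving a separate pathwise argument for $N(q)$. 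The crucial point is that the paper's cut-off is in the \emph{product} $|xy|$, which is symmetric and matches the structure of $B^*(f)_T$.

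Your component-wise jump threshold does not, and this is where there is a genuine gap: common jumps whose two components lie on opposite sides of the threshold, i.e.\ times $s$ with $|\Delta X_s^{(1)}|>\eta$ but $0<|\Delta X_s^{(2)}|\leq\eta$. These belong to $B^*(f)_T$ (and are excluded from your $B^*_\eta(f)_T$, which requires $\min_l|\Delta X_s^{(l)}|>\eta$), yet in your classification the corresponding pair $(i,j)$ falls into the ``only one side has a big jump'' bucket, whose contribution you claim is $O(|\pi_n|_T)$. The estimate $\sum_{j:\mathcal{I}_{j,n}^{(2)}\cap\mathcal{I}_{i,n}^{(1)}\neq\emptyset}\mathbb{E}[(\Delta_{j,n}^{(2)}X'(\eta))^2]\leq C(|\mathcal{I}_{i,n}^{(1)}|+2|\pi_n|_T)$ is an \emph{unconditional} moment bound, but the indices $(i,j)$ you sum over are random (they track the big-jump times of $X^{(1)}$), and exactly at those times the small-jump part of $X^{(2)}$ is not of order $|\mathcal{I}_{j,n}^{(2)}|^{1/2}$: conditionally on the big jump of $X^{(1)}$ occurring in the overlap, $\Delta_{j,n}^{(2)}X'(\eta)\to\Delta X_s^{(2)}\neq 0$. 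So for fixed $\eta$ this bucket converges to $\sum_s f(\Delta X_s^{(1)},\Delta X_s^{(2)})\mathds{1}_{\{|\Delta X_s^{(1)}|>\eta\geq|\Delta X_s^{(2)}|>0\}}$, not to $0$; your argument silently discards a part of the limit (it only reappears by accident when $\eta\to 0$, and you have no uniform-in-$n$ error bound justifying that exchange). Your decomposition is also internally inconsistent ($X(\eta)$ is cut by $\|\delta\|>\eta$, the classification is per component, $B^*_\eta$ uses $\min_l|\Delta X_s^{(l)}|>\eta$), and relatedly the displayed claim that $\sum_{(i,j)}\mathbb{E}[(\Delta_{i,n}^{(1)}X'(\eta))^2(\Delta_{j,n}^{(2)}X'(\eta))^2]\to 0$ as $n\to\infty$ is false for fixed $\eta$ when $X$ has common small jumps: the overlap term contributes an amount of order $T\int\gamma^4\mathds{1}_{\{\gamma\leq\eta\}}\lambda(dz)$ independent of $n$, so the whole argument has to be organized as $\limsup_n$ followed by $\eta\to 0$, with care also in the mixed Brownian/small-jump terms, where a naive Cauchy--Schwarz across the two factors produces $\sum_{i,j}|\mathcal{I}_{i,n}^{(1)}||\mathcal{I}_{j,n}^{(2)}|^{1/2}\mathds{1}_{\{\text{overlap}\}}$, which is not bounded under Condition \ref{cond_consistency} (cf.\ Example \ref{ex_min_req2b}); one must first split at the overlap and condition. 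The product-truncation route of the paper avoids all of these issues at once.
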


As for the convergence in Theorem \ref{general_nonnormFunc_synch} in the setting of synchronous observation times we need that $f(x,y)$ vanishes as $(x,y) \rightarrow 0$ also in the setting of asynchronous observation times. However in the asynchronous setting we further need ${f(x_k,y_k)\rightarrow 0}$ also for sequences $(x_k,y_k)_{k \in \mathbb{N}}$ which do not converge to zero, but which fulfill ${|x_k y_k| \rightarrow 0}$. Hence the condition on $f$ needed to obtain convergence of $V(f,\pi_n)_T$ in the asynchronous setting is stronger compared to the corresponding condition in the synchronous setting. Further we observe that in the asynchronous setting the limit only consists of common jumps of $X^{(1)}$ and $X^{(2)}$ i.e.\ jumps with $\Delta X^{(1)}_s \neq 0 \neq \Delta X^{(2)}_s$. The following example illustrates the need for the stronger condition as well as why we only consider functions $f$ which yield a limit that consists only of common jumps.

\begin{example}\label{ex_min_req1}
Consider the function $f_{3,0}(x,y)=x^3$, which fulfills $f_{3,0}(x,y)\rightarrow 0$ as $(x,y)\rightarrow 0$ but not as $|xy|\rightarrow 0$, and the observation scheme given by $t_{i,n}^{(1)}=i/n$ and $t_{i,n}^{(2)}=i/(2n)$. Then
\begin{align*}
V(f_{3,0},\pi_n)_T=2 \sum_{i:t_{i,n}^{(1)}\leq T} (\Delta_{i,n}^{(1)} X^{(1)})^3 \overset{\mathbb{P}}{\longrightarrow} 2 B(f_{3,0})_T
\end{align*}
where the convergence is due to Corollary \ref{nonnormFunc_onedim}. However for the standard synchronous observation scheme $t_{i,n}^{(1)}=t_{i,n}^{(2)}=i/n$ we have $V(f_{3,0},\pi_n)_T \overset{\mathbb{P}}{\longrightarrow} B(f_{3,0})_T$ also due to Corollary \ref{nonnormFunc_onedim}. Hence the limit here depends on the observation scheme. If we further consider the observation scheme with $t_{i,n}^{(1)}=i/n$ and
\begin{align*}
t_{i,n}^{(2)}=\begin{cases}
i/n , &n \text{~even},
\\ i/(2n),& n  \text{ odd},
\end{cases}
\end{align*}
then $V(f_{3,0},\pi_n)_T$ does not converge at all unless $B(f_{3,0})_T=0$, as one subsequence converges to $B(f_{3,0})_T$ and the other one to $2B(f_{3,0})_T$. Hence there cannot exist a convergence result for $V(f_{3,0},\pi_n)_T$ which holds for any It\^o semimartingale $X$ and any sequence of observation schemes $\pi_n$, $ n \in \mathbb{N}$. 

If we consider instead a function $f(x,y)$ that vanishes as $|xy| \rightarrow 0$ such a behaviour cannot occur because idiosyncratic jumps do not contribute in the limit as e.g.\ for $\Delta X^{(1)}_s \neq 0$ and $\Delta X^{(2)}_s =0$ we have 
\begin{align*}
\sup_{(i,j):s \in \mathcal{I}_{i,n}^{(1)}, \mathcal{I}_{i,n}^{(1)} \cap \mathcal{I}_{j,n}^{(2)} \neq \emptyset} |\Delta_{i,n}^{(1)}X^{(1)} \Delta_{j,n}^{(2)} X^{(2)}|\overset{\mathbb{P}}{\longrightarrow} 0.
\end{align*}
If on the other hand there is a common jump at time $s$ there is only one summand $f(\Delta_{i,n}^{(1)}X^{(1)},\Delta_{j,n}^{(2)}X^{(2)})$ such that $s \in \mathcal{I}_{i,n}^{(1)}$ and $s \in \mathcal{I}_{j,n}^{(2)}$. Hence common jumps only enter the limit once.

This example shows that the assumption that $f(x,y)$ vanishes as $|xy|\rightarrow 0$ is needed to filter out the contribution of idiosyncratic jumps. These jumps may enter $V(f,\pi_n)_T$ multiple times, where the multiplicity by which they occur may depend on $n$ and $\omega$ and therefore may prevent $V(f,\pi_n)_T$ from converging. \qed
\end{example}

Let us now consider the order by which the function $f(x,y)$ has to decrease as $(x,y) \rightarrow 0$ or, respectively, $|xy| \rightarrow 0$. We observe that in the asynchronous setting the function $f$ has to decrease quadratically in both $x$ and $y$ while in the synchronous setting it only has to decrease quadratically in $(x,y)$.  Adding this condition to the requirement that $f(x_k y_k)$ has to vanish for any sequence with $|x_k y_k| \rightarrow 0$ further diminishes the class of functions $f$ for which $V(f,\pi_n)_T$ converges in the asynchronous setting compared to the synchronous one. The need for this stronger condition on $f$ is due to the fact that the lengths of the observation intervals of $X^{(1)}$ and $X^{(2)}$ may decrease with different rates in the asynchronous setting which is illustrated in the following example.

\begin{example}\label{ex_min_req2}
Let $X^{(1)}_t=\mathds{1}_{\{t \geq U\}}$ for $U \sim \mathcal{U}[0,1]$ and $X^{(2)}$ be a standard Brownian motion independent of $U$. The observation schemes are given by $t_{i,n}^{(1)}=i/n$ and $t_{i,n}^{(2)}=i/n^{1+\gamma}$ with $\gamma>0$. Then for $f(x,y)=|x|^{p_1} |y|^{p_2}$ as illustrated in Figure \ref{obs_freq} we have
\begin{align*}
V(f,\pi_n)_1&= \sum_{i=\lfloor n^{1+\gamma}(\lceil  n U \rceil-1) /n\rfloor/n^{1+\gamma}+1}^{\lceil n^{1+\gamma}\lceil  n U \rceil /n\rceil/n^{1+\gamma}} \big|X^{(2)}_{i/n^{1+\gamma}}-X^{(2)}_{(i-1)/n^{1+\gamma}}  \big|^{p_2}
\\& \geq \sum_{i=1}^{\lceil n^\gamma \rceil} \big|n^{-(1+\gamma)/2}Z_i^n \big|^{p_2}
\\& = n^{-(1+\gamma){p_2}/2+\gamma}
\Big( n^{-\gamma}\sum_{i=1}^{\lceil n^\gamma \rceil} \big|Z_i^n \big|^{p_2} \Big)
\end{align*}
where the $Z_i^n:=n^{(1+\gamma)/2}\Delta_{\lfloor n^{1+\gamma}(\lceil  n U \rceil-1) /n\rfloor/n^{1+\gamma} +i/n^{1+\gamma},n}^{(2)}X^{(2)}$, $i=1,\ldots \lceil n^\gamma \rceil$, are i.i.d.\ standard normal random variables for each $n \in \mathbb{N}$. Hence $V(f,\pi_n)_1$ diverges for $p_2 <2$ if 
\begin{align*}
\gamma > \frac{p_2}{2-{p_2}}
\end{align*}
because the expression in parantheses converges in probability to $\mathbb{E}[|Z|^{p_2}]$, $Z \sim \mathcal{N}(0,1)$, by the law of large numbers. Here we are able to find a suitably large $\gamma$ explicitly because the ${p_2}$-variations of a Brownian motion are infinite for ${p_2}<2$. But we also have $B^*(f)_1=0$ in this setting because $X^{(2)}$ is continuous. Hence \eqref{cons_22} cannot hold for $f(x,y)=|x|^{p_1} |y|^{p_2}$, any It\^o semimartingale of the form \eqref{ItoSemimart} and any observation scheme which fulfills Condition \ref{cond_consistency} if $p_1 \wedge p_2 < 2$. \qed

\begin{figure}[bt]
\centering
\hspace{-0.6cm}
\begin{tikzpicture}
\draw[dashed] (0.5,1) -- (2.5,1)
			(10.5,1) -- (12,1)
			(0.5,-1) -- (1.5,-1)
			(11,-1) -- (12,-1)
			(7,-1.2) -- (7,1.2);
\draw (3,-1.3) -- (3,-0.7)
(4.5,-1.3) -- (4.5,-0.7)
(6,-1.3) -- (6,-0.7)
(7.5,-1.3) -- (7.5,-0.7)
(9,-1.3) -- (9,-0.7)
(10.5,-1.3) -- (10.5,-0.7)
(3.7,1.3) -- (3.7,0.7)
	  (9.3,1.3) -- (9.3,0.7);
\draw (2.5,1) -- (10.5,1)
	  (1.5,-1) -- (11,-1);
\draw (3.7,1.7) node{$\frac{\lceil nU \rceil -1}{n}$}
(9.3,1.7) node{$\frac{\lceil nU \rceil }{n}$}
(7,1.5) node{$U$}
		(3,-1.7) node{$\frac{\lfloor n^{1+\gamma}(\lceil nU \rceil -1)/n\rfloor}{n^{1+\gamma}}$}
		(10.5,-1.7) node{$\frac{\lceil n^{1+\gamma}\lceil nU \rceil /n\rceil}{n^{1+\gamma}}$};	
\draw (0.5,1) node[left,xshift=-2pt]{$X^{(1)}$}
(0.5,-1) node[left,xshift=-2pt]{$X^{(2)}$};
\draw[decorate,decoration={brace,amplitude=12pt}]
	(6,-1.35)--(4.5,-1.35) node[midway, below,yshift=-12pt,]{$1/n^{1+\gamma}$};
	\draw[decorate,decoration={brace,amplitude=12pt}]
	(9.3,0.65)--(3.7,0.65) node[midway, below,yshift=-12pt,]{$1/n$};
\end{tikzpicture}
\caption{Observation times of $X^{(1)}$ and $X^{(2)}$ around the jump time $U$.}\label{obs_freq}
\end{figure}
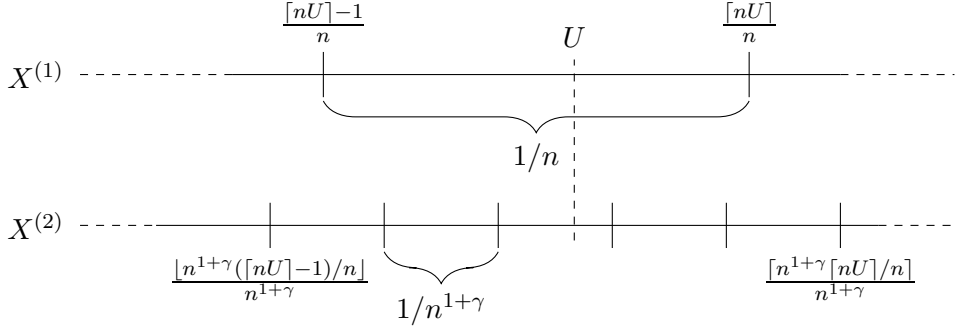
\end{example}

Example \ref{ex_min_req2} shows that the convergence \eqref{cons_22} fails for functions $f(x,y)=|x|^{p_1} |y|^{p_2}$ with $p_1,p_2 <2$ in combination with observation schemes where the observation frequency for one process increases much faster as $n \rightarrow\infty$ than the observation frequency of the other process. If we consider only observation schemes where such a behaviour is prohibited, we can also obtain the convergence in \eqref{cons_22} for functions  $f(x,y)=|x|^{p_1} |y|^{p_2}$ with $p_1,p_2 <2$.

First, we state a result in the case of exogenous observation times introduced in Definition \ref{def_exo}, i.e.\ random observation times that do not depend on the process $X$ or its components.

\begin{theorem}\label{general_1plusBeta}
Assume that Condition \ref{cond_consistency} holds and that the observation scheme is exogenous. Further let $p_1, p_2>0$ with $p_1+p_2 \geq 2$. If we have
\begin{align}\label{cons_cond_1plusBeta}
\sum_{i,j:t_{i,n}^{(1)} \vee t_{j,n}^{(2)}\leq T} 
\big|\mathcal{I}_{i,n}^{(1)} \big|^{\frac{p_1}{2}\wedge 1} \big|\mathcal{I}_{j,n}^{(2)} \big|^{\frac{p_2}{2}\wedge 1}
\mathds{1}_{\{\mathcal{I}_{i,n}^{(1)} \cap \mathcal{I}_{j,n}^{(2)}\neq \emptyset \}}=O_\mathbb{P}(1)
\end{align}
as $n \rightarrow \infty$ it holds
\begin{align}\label{cons_1plusBeta}
V(f,\pi_n)_T \overset{\mathbb{P}}{\longrightarrow} B^*(f)_T
\end{align}
for all continuous functions $f:\R^2 \rightarrow \R$ with $f(x,y)=o(|x|^{p_1}|y|^{p_2})$ as $|xy|\rightarrow 0$. 
\end{theorem}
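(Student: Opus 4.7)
The plan is to refine the argument behind Theorem \ref{general_22} by exploiting the stronger vanishing hypothesis on $f$, the summability condition \eqref{cons_cond_1plusBeta}, and the independence of $X$ from $\mathcal{S}$ afforded by exogeneity. After a standard localization reducing to bounded $b$, $\sigma$, $\delta$, I truncate the jumps by writing $X = Y(\alpha) + Z(\alpha)$, where $Z(\alpha)$ collects the finitely many jumps of size $>\alpha$ on $[0,T]$ (at times $S_1,\ldots, S_N$) and $Y(\alpha)$ is an It\^o semimartingale with jumps bounded by $\alpha$. Accordingly, $V(f,\pi_n)_T$ decomposes into contributions indexed by whether $\mathcal{I}^{(1)}_{i,n}$ and $\mathcal{I}^{(2)}_{j,n}$ contain jump times of $Z(\alpha)$.

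Pairs whose overlap contains a common jump time $S_k$ with $\Delta X^{(1)}_{S_k}\Delta X^{(2)}_{S_k} \neq 0$ yield, by continuity of $f$ and $|\pi_n|_T\to 0$, the term $f(\Delta X^{(1)}_{S_k},\Delta X^{(2)}_{S_k})$; summing and then sending $\alpha\to 0$ gives $B^*(f)_T$. For the remaining pairs (idiosyncratic big jump in only one component, or no big jumps at all), fix $\eta>0$ and choose $\delta>0$ so that $|xy|<\delta$ implies $|f(x,y)|\leq \eta |x|^{p_1}|y|^{p_2}$. I split each summand along $\mathds{1}_{\{|\Delta^{(1)}\Delta^{(2)}|<\delta\}}$ and its complement: on the complement the event has vanishing probability (controlled via Markov's inequality together with the marginal moment bounds $\Exp[|\Delta^{(l)}_{i,n}Y|^q\mid \mathcal{F}_{t^{(l)}_{i-1,n}}] \leq C |\mathcal{I}^{(l)}_{i,n}|^{q/2\wedge 1}$), and boundedness of $f$ on compacts makes this piece $o_\mathbb{P}(1)$. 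On the main event one uses $|f|\leq \eta |\Delta^{(1)}|^{p_1}|\Delta^{(2)}|^{p_2}$, and, by conditioning on $\mathcal{S}$ via exogeneity, the task reduces to the product-moment bound
\[
\Exp\big[|\Delta^{(1)}_{i,n}Y|^{p_1}\,|\Delta^{(2)}_{j,n}Y|^{p_2} \,\big|\, \mathcal{S}\big] \leq C\, |\mathcal{I}^{(1)}_{i,n}|^{p_1/2\wedge 1} |\mathcal{I}^{(2)}_{j,n}|^{p_2/2\wedge 1},
\]
after which \eqref{cons_cond_1plusBeta} bounds the expected remaining contribution by $O_\mathbb{P}(\eta)$, and $\eta\to 0$ finishes the estimate. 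The idiosyncratic-jump blocks are handled the same way, with $|\Delta^{(1)}|^{p_1}$ replaced by a bounded constant determined by the jump size, and the sum taken over $j$ overlapping the unique interval $\mathcal{I}^{(1)}_{i_k,n}$.

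The main obstacle is establishing the product-moment bound above. Since $\Delta^{(1)}_{i,n}Y$ and $\Delta^{(2)}_{j,n}Y$ share the driving Brownian and Poisson noise, a naive Cauchy--Schwarz or H\"older yields the exponent $(p_l\wedge 1)/2$ or $p_l/(p_1+p_2)$ of $|\mathcal{I}^{(l)}_{i,n}|$, both of which are too weak when $p_l>1$. The remedy is to partition $\mathcal{I}^{(1)}_{i,n}\cup\mathcal{I}^{(2)}_{j,n}$ into the three natural sub-intervals (before, during, and after the overlap $O = \mathcal{I}^{(1)}_{i,n}\cap\mathcal{I}^{(2)}_{j,n}$) and split each increment along this partition. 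The two non-overlap pieces of $Y^{(1)}$ and $Y^{(2)}$ are supported on disjoint intervals and are conditionally uncorrelated given the past filtration, so their contribution to the product moment factorizes and is bounded by the product of the individual moment estimates, producing exactly the required exponents. The overlap pieces are controlled via their joint distribution with $|O|\leq \min(|\mathcal{I}^{(1)}_{i,n}|,|\mathcal{I}^{(2)}_{j,n}|)$; the hypothesis $p_1+p_2\geq 2$ enters precisely here, as it guarantees that the combined decay rate $|O|^{(p_1+p_2)/2}$ of the shared-noise contribution is dominated by $|\mathcal{I}^{(1)}_{i,n}|^{p_1/2\wedge 1}|\mathcal{I}^{(2)}_{j,n}|^{p_2/2\wedge 1}$, closing the bound.
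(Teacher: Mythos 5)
Your overall architecture (truncate big jumps, identify $B^*(f)_T$ from the common big jumps, and reduce the rest to showing that $\sum_{i,j}|\Delta^{(1)}_{i,n}X^{(1)}|^{p_1}|\Delta^{(2)}_{j,n}X^{(2)}|^{p_2}\mathds{1}_{\{\mathcal{I}^{(1)}_{i,n}\cap\mathcal{I}^{(2)}_{j,n}\neq\emptyset\}}$ is tight, so that the $o(|x|^{p_1}|y|^{p_2})$ hypothesis kills the remainder) is the same as the paper's. The gap is in the step you yourself single out as the main obstacle: the product-moment bound
\begin{align*}
\Exp\big[|\Delta^{(1)}_{i,n}Y|^{p_1}|\Delta^{(2)}_{j,n}Y|^{p_2}\,\big|\,\mathcal{S}\big]\leq C\,|\mathcal{I}^{(1)}_{i,n}|^{\frac{p_1}{2}\wedge 1}|\mathcal{I}^{(2)}_{j,n}|^{\frac{p_2}{2}\wedge 1}
\end{align*}
is false in general, and your justification for the overlap term is where it breaks. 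You assert that the shared-noise contribution on $O=\mathcal{I}^{(1)}_{i,n}\cap\mathcal{I}^{(2)}_{j,n}$ decays like $|O|^{(p_1+p_2)/2}$; that rate is correct only for the continuous martingale part. For the (truncated) jump part --- which survives in $Y(\alpha)$ as the compensated small-jump martingale, and which cannot be removed since the whole theorem is about jump limits --- the available estimates \eqref{elem_ineq_M}, \eqref{elem_ineq_X} cap the exponent at $\frac{p}{2}\wedge 1$, so the overlap contribution scales \emph{linearly} in $|O|$: if $X^{(1)},X^{(2)}$ have common (arbitrarily small) jumps and $\mathcal{I}^{(1)}_{i,n}=\mathcal{I}^{(2)}_{j,n}$ has length $h$, then the left-hand side is of order $h$, while your right-hand side is of order $h^{(\frac{p_1}{2}\wedge1)+(\frac{p_2}{2}\wedge1)}$, whose exponent exceeds $1$ whenever $p_1+p_2>2$ (and also when one $p_l>2$); the bound survives only in the boundary case $p_1+p_2=2$ with $p_1,p_2\leq 2$, so in particular it fails for the motivating case $p_1=p_2=2$. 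Consequently the subsequent step, in which you feed every pair $(i,j)$ through \eqref{cons_cond_1plusBeta}, does not go through.

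The paper's proof avoids exactly this: it never tries to dominate the shared part by $|\mathcal{I}^{(1)}_{i,n}|^{\frac{p_1}{2}\wedge1}|\mathcal{I}^{(2)}_{j,n}|^{\frac{p_2}{2}\wedge1}$. Instead it regroups along the merged observation times $T^n_k$ and splits each increment into a piece before $T^n_{k-1}$, the common piece over $(T^n_{k-1},T^n_k]$, and a piece after $T^n_k$. The common-piece products are bounded via Muirhead, $2|a|^{p_1}|b|^{p_2}\leq|a|^{p_1+p_2}+|b|^{p_1+p_2}$, and \eqref{elem_ineq_X} with exponent $\frac{p_1+p_2}{2}\wedge1=1$ (this is where $p_1+p_2\geq2$ enters), and then summed over the \emph{disjoint} merged intervals, giving a separate $O(T)$ term that never passes through \eqref{cons_cond_1plusBeta}; only the genuinely disjoint cross-pieces, which do factorize conditionally on $\mathcal{S}$, are bounded by products of interval-length powers and controlled by \eqref{cons_cond_1plusBeta}. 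If you want to keep your organization by pairs $(i,j)$, the repair is the same idea: bound the overlap-against-overlap contribution of each pair by $K|O_{i,j}|$ (not by the product of interval-length powers) and use that the overlaps $O_{i,j}$ for distinct pairs are disjoint, so $\sum_{i,j}|O_{i,j}|\leq T$; as written, however, your key lemma is not valid and the proof has a genuine gap.
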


In the boundary case $p_1+p_2=2$ in Theorem \ref{general_1plusBeta} we achieve convergence for all functions $f$ that are for $|xy|\rightarrow 0$ dominated by the function $|x|^{p_1}|y|^{p_2}$ which is of order $p_1+p_2=2$. Hence Theorem \ref{general_1plusBeta} allows to achieve the convergence in \eqref{cons_1plusBeta} for functions $f$ which are dominated by functions of the same order as the dominating function $\|(x,y)\|^2$ in the synchronous case in Theorem \ref{general_nonnormFunc_synch}. However, the requirement that $f(x,y)$ vanishes as $|xy|\rightarrow 0$ cannot be relaxed because this assumption is as illustrated in Example \ref{ex_min_req1} fundamentally necessary due to the asynchronous nature of the observation scheme.\\

As in the synchronous setting we cannot have the general convergence in \eqref{cons_1plusBeta} for functions $f$ which do not fulfill $f(x,y)=O(\|(x,y)\|^2)$ as $(x,y) \rightarrow 0$, because in this case $B^*(f)$ might not be well defined. An indication for this fact is also given by the observation that condition \eqref{cons_cond_1plusBeta} can never be fulfilled if $p_1+p_2<2$ and $|\pi_n|_T \rightarrow 0$ as shown in the following remark.

\begin{remark}\label{remark_order2}
Suppose that we have $p_1+p_2<2$ and $|\pi_n|_T \rightarrow 0$. In this situation we obtain the following estimate for the left hand side of \eqref{cons_cond_1plusBeta}, using $p_l/2 \wedge 1=p_l/2$, $l=1,2$, and the inequality $\sum_{i=1}^N {a_i}^p \geq (\sum_{i=1}^N {a_i})^p$, which holds for all $N \in \mathbb{N}$, $a_i \geq 0$, $p \in [0,1)$:
\begin{align*}
&\sum_{i,j:t_{i,n}^{(1)} \vee t_{j,n}^{(2)}\leq T} 
|\mathcal{I}_{i,n}^{(1)} |^{\frac{p_1}{2}\wedge 1} |\mathcal{I}_{j,n}^{(2)} |^{\frac{p_2}{2}\wedge 1} \mathds{1}_{\{\mathcal{I}_{i,n}^{(1)} \cap \mathcal{I}_{j,n}^{(2)}\neq \emptyset \}}
\\&~~~~~\geq \sum_{i:t_{i,n}^{(1)} \leq T} 
|\mathcal{I}_{i,n}^{(1)}|^{\frac{p_1}{2}} \Big(\sum_{j:t_{j,n}^{(2)}\leq T} |\mathcal{I}_{j,n}^{(2)} |\mathds{1}_{\{\mathcal{I}_{i,n}^{(1)} \cap \mathcal{I}_{j,n}^{(2)}\neq \emptyset \}}\Big)^{\frac{p_2}{2}}
\\&~~~~~\geq \sum_{i:t_{i,n}^{(1)} \leq T} 
|\mathcal{I}_{i,n}^{(1)}|^{\frac{p_1}{2}} |\mathcal{I}_{i,n}^{(1)}|^{\frac{p_2}{2}}-O((|\pi_n|_T)^{\frac{p_1+p_2}{2}})
\\&~~~~~\geq (|\pi_n|_T)^{\frac{p_1+p_2}{2}-1}T-O((|\pi_n|_T)^{\frac{p_1+p_2}{2}}).
\end{align*}
Here the expression in the last line converges in probability to infinity due to ${p_1+p_2<2}$ and $|\pi_n|_T \overset{\mathbb{P}}{\longrightarrow} 0$.

Suppose $p_1 \wedge p_2\geq 2$ holds. Then we have
\begin{align*}
&\sum_{i,j:t_{i,n}^{(1)} \vee t_{j,n}^{(2)}\leq T} 
|\mathcal{I}_{i,n}^{(1)} |^{\frac{p_1}{2}\wedge 1} |\mathcal{I}_{j,n}^{(2)} |^{\frac{p_2}{2}\wedge 1} \mathds{1}_{\{\mathcal{I}_{i,n}^{(1)} \cap \mathcal{I}_{j,n}^{(2)}\neq \emptyset \}}
\\&~~~~~~~~~~~~~~~~\leq K\sum_{i,j:t_{i,n}^{(1)} \vee t_{j,n}^{(2)}\leq T} 
|\mathcal{I}_{i,n}^{(1)} | |\mathcal{I}_{j,n}^{(2)} | \mathds{1}_{\{\mathcal{I}_{i,n}^{(1)} \cap \mathcal{I}_{j,n}^{(2)}\neq \emptyset \}}
\leq 3 K |\pi_n|_T T.
\end{align*}
Hence Condition \eqref{cons_cond_1plusBeta} is by Condition \ref{cond_consistency} always fulfilled in the setting of Theorem \ref{general_22}. \qed
\end{remark}

\begin{example}\label{ex_min_req2b}
Let $p \in [1,2)$ and consider the deterministic sampling scheme given by $t_{i,n}^{(1)}=i/n$ and $t_{i,n}^{(2)}=i/n^{1+\gamma}$ from Example \ref{ex_min_req2} with $\gamma=\gamma(p)=\frac{2p-2}{2-p}$. In this case it holds
\begin{align*}
&\sum_{i,j:t_{i,n}^{(1)} \vee t_{j,n}^{(2)}\leq T} 
\big(\big|\mathcal{I}_{i,n}^{(1)} \big| \big|\mathcal{I}_{j,n}^{(2)} \big|\big)^{\frac{p}{2}}
\mathds{1}_{\{\mathcal{I}_{i,n}^{(1)} \cap \mathcal{I}_{j,n}^{(2)}\neq \emptyset \}} = Tn^{1+\gamma(p)} \Big(\frac{1}{n}\frac{1}{n^{1+\gamma(p)}}\Big)^{\frac{p}{2}}(1+o(1))
\\&~~~~~~~=Tn^{1+\gamma(p)-(2+\gamma(p))\frac{p}{2}}(1+o(1))=O(1).
\end{align*}
Hence if we want \eqref{cons_1plusBeta} to hold for all functions $f$ with $f(x,y)=o(|xy|^p)$ we can allow for observation schemes where the observation frequencies differ by a factor of up to $n^{\gamma(p)}$ where $\gamma(p)$ increases in $p$. For $p=1$ we have $\gamma(1)=0$ and for $p \rightarrow 2$ we have $\gamma(p)\rightarrow \infty$. \qed
\end{example}

In general we observe that if the $o(|x|^{p_1}|y|^{p_2})$-restriction on $f$ is less restrictive, then the restriction \eqref{cons_cond_1plusBeta} on the observation scheme has to be more restrictive, and vice versa. Here the abstract criterion \eqref{cons_cond_1plusBeta} characterizing the allowed classes of observation schemes can be related, as illustrated in Example \ref {ex_min_req2b}, to the asymptotics of the ratio of the observation frequencies of the two processes. Hence if the observation frequency of one process increases much faster than the observation frequency of the other process we obtain the convergence in \eqref{cons_1plusBeta} only for a small class of functions $f$.

\begin{example}
Condition \eqref{cons_cond_1plusBeta} is fulfilled for $p_1=p_2=1$ in the case where the observation times $\{t_{i,n}^{(l)}:i \in \mathbb{N}\}$, $l=1,2$, are given by the jump times of two independent time-homogeneous Poisson processes with intensities $n\lambda_1,n \lambda_2$. Indeed the arguments used in Lemma 6.5 of \cite{MarVet18b} yield 
\begin{align*}
\sum_{i,j:t_{i,n}^{(1)} \vee t_{j,n}^{(2)}\leq t} 
\big|\mathcal{I}_{i,n}^{(1)} \big|^{\frac{1}{2}} \big|\mathcal{I}_{j,n}^{(2)} \big|^{\frac{1}{2}}
\mathds{1}_{\{\mathcal{I}_{i,n}^{(1)} \cap \mathcal{I}_{j,n}^{(2)}\neq \emptyset \}} \overset{\mathbb{P}}{\longrightarrow} c t,~~ t \geq 0 ,
\end{align*}
for some positive real number $c>0$. Further note that if \eqref{cons_cond_1plusBeta} is fulfilled for $p_1,p_2\geq 1$ it is clearly also fulfilled for $p_1'\geq p_1,p_2'\geq p_2$. \qed
\end{example}

Next, we give a result that may also be applied in a setting with endogenous observation times.

\begin{theorem}\label{general_1plusBeta_end1}
Assume that Condition \ref{cond_consistency} holds. If for all $\varepsilon >0$ there exists some $N_\varepsilon \in \mathbb{N}$ with
\begin{align}\label{cons_cond_1plusBeta_end1}
&\limsup_{n \rightarrow \infty} \mathbb{P}\Big(\sup_{i:t_{i,n}^{(l)} \leq T} \sum_{j \in \mathbb{N}} 
\mathds{1}_{\{\mathcal{I}_{i,n}^{(l)} \cap \mathcal{I}_{j,n}^{(3-l)}\neq \emptyset \}}> N_\varepsilon,~l=1,2 \Big)< \varepsilon,
\end{align}
then it holds
\begin{align}\label{cons_1plusBeta_end1}
V(f,\pi_n)_T \overset{\mathbb{P}}{\longrightarrow} B^*(f)_T
\end{align}
for all continuous functions $f:\R^2 \rightarrow \R$ such that $f(x,y)=o(|x|^{p_1}|y|^{p_2})$ as $|xy|\rightarrow 0$ for some $p_1,p_2 \geq 0$ with $p_1+p_2=2$. 
\end{theorem}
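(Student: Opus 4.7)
I would first localize by the usual stopping-time argument so that $b,\sigma,\Gamma$ and the path of $X$ are uniformly bounded on $[0,T]$. For a small truncation level $\alpha>0$ split $X = X(\alpha) + X''(\alpha)$, where $X''(\alpha)$ collects the finitely many jumps of $X$ of norm exceeding $\alpha$ on $[0,T]$, with jump times $S_1,\dots,S_N$, while $X(\alpha)$ is an It\^o semimartingale with jumps bounded by $\alpha$. Fix $\varepsilon>0$ and pick $N_\varepsilon$ from \eqref{cons_cond_1plusBeta_end1}; work on
\[
A_n^\varepsilon = \Bigl\{\sup_{i:t_{i,n}^{(l)}\leq T} \sum_{j} \mathds{1}_{\{\mathcal{I}_{i,n}^{(l)}\cap\mathcal{I}_{j,n}^{(3-l)}\neq\emptyset\}} \leq N_\varepsilon,\ l=1,2\Bigr\},
\]
which satisfies $\limsup_n \mathbb{P}((A_n^\varepsilon)^c)<\varepsilon$. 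For each $k$ let $i_n^{(l)}(k)$ be the a.s.\ unique index with $S_k\in\mathcal{I}_{i_n^{(l)}(k),n}^{(l)}$, and split $V(f,\pi_n)_T = V_n^{\mathrm{jump}} + V_n^{\mathrm{bulk}}$ according to whether a pair $(i,j)$ involves one of the distinguished indices or not; on $A_n^\varepsilon$, $|\mathcal{J}_n^{\mathrm{jump}}|\leq 2NN_\varepsilon$.

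\textbf{Jump contribution.} For a common jump at $S_k$ the diagonal pair $(i_n^{(1)}(k),i_n^{(2)}(k))$ always lies in $\mathcal{J}_n^{\mathrm{jump}}$, and continuity of $f$ together with the fact that the non-jump part of each relevant increment vanishes in probability gives a contribution converging to $f(\Delta X_{S_k}^{(1)},\Delta X_{S_k}^{(2)})$. For an idiosyncratic jump of $X''(\alpha)$ at $S_k$, say only in coordinate $1$, each of the at most $N_\varepsilon$ companion pairs $(i_n^{(1)}(k),j)$ has $|\Delta_{j,n}^{(2)}X^{(2)}|\overset{\mathbb{P}}{\longrightarrow}0$, hence $|xy|\to 0$, and the $o$-condition on $f$ makes the contribution vanish; the symmetric case is handled analogously. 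Summing over the finitely many $S_k$, $V_n^{\mathrm{jump}}$ converges in probability to the restriction of $B^*(f)_T$ to jumps of size $>\alpha$, which approaches $B^*(f)_T$ as $\alpha\downarrow 0$ by dominated convergence (using $p_1+p_2=2$ and $\sum_s\|\Delta X_s\|^2<\infty$ to guarantee summability of $\sum_s|\Delta X_s^{(1)}|^{p_1}|\Delta X_s^{(2)}|^{p_2}$).

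\textbf{Bulk estimate.} Given $\eta>0$ pick $\delta_\eta>0$ with $|f(x,y)|\leq \eta|x|^{p_1}|y|^{p_2}$ for $|xy|\leq\delta_\eta$. Since $X(\alpha)$ has jumps $\leq\alpha$ and bounded coefficients, a standard modulus-of-continuity estimate yields $\lim_{\alpha\downarrow 0}\limsup_n\mathbb{P}(\sup_i|\Delta_{i,n}^{(l)}X(\alpha)^{(l)}|>\sqrt{\delta_\eta})=0$. On the bulk the increments of $X$ coincide with those of $X(\alpha)$, so for $\alpha$ small and $n$ large every pair in $\mathcal{J}_n^{\mathrm{bulk}}$ satisfies $|\Delta_{i,n}^{(1)}X^{(1)}\Delta_{j,n}^{(2)}X^{(2)}|\leq\delta_\eta$ with probability at least $1-\varepsilon$. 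Young's inequality $|x|^{p_1}|y|^{p_2}\leq \tfrac{p_1}{2}|x|^2+\tfrac{p_2}{2}|y|^2$ (valid because $p_1+p_2=2$) together with the overlap-count bound on $A_n^\varepsilon$ then gives
\[
|V_n^{\mathrm{bulk}}|\leq \eta N_\varepsilon\Bigl(\tfrac{p_1}{2}\sum_i|\Delta_{i,n}^{(1)}X(\alpha)^{(1)}|^2+\tfrac{p_2}{2}\sum_j|\Delta_{j,n}^{(2)}X(\alpha)^{(2)}|^2\Bigr)=O_\mathbb{P}(\eta N_\varepsilon),
\]
because the realised variances converge to $[X(\alpha)^{(l)},X(\alpha)^{(l)}]_T$. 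Taking iterated limits $n\to\infty$, then $\eta\downarrow 0$, then $\alpha\downarrow 0$, then $\varepsilon\downarrow 0$ concludes the proof.

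\textbf{Main obstacle.} The delicate step is the bulk estimate: without exogeneity (cf.\ Theorem \ref{general_1plusBeta}) one cannot condition on the observation scheme and invoke moment inequalities over $\pi_n$. The overlap-count bound \eqref{cons_cond_1plusBeta_end1} must be applied pathwise, and the key algebraic input is Young's inequality in the boundary case $p_1+p_2=2$, which collapses the doubly-indexed bulk sum into two singly-indexed quadratic-variation sums. Combined with the uniform-oscillation control for $X(\alpha)$, this is what allows the $o(|x|^{p_1}|y|^{p_2})$-condition on $f$ to suffice in the general, possibly endogenous, setting.
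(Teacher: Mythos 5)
Your proposal is sound and its decisive ingredient is exactly the one the paper uses for this theorem: on the high-probability event where each observation interval of one process overlaps at most $N_\varepsilon$ intervals of the other, the inequality $|x|^{p_1}|y|^{p_2}\leq \tfrac{p_1}{2}x^2+\tfrac{p_2}{2}y^2$ (valid precisely because $p_1+p_2=2$) collapses the doubly-indexed sum into $N_\varepsilon$ times two realised variances, which converge to the quadratic variations --- a purely pathwise argument that needs no exogeneity. The packaging around this core differs from the paper's: the paper truncates the \emph{function}, setting $f_\rho(x,y)=f(x,y)\psi(|xy|/\rho)$, handles $V(f_\rho,\pi_n)_T\to B^*(f_\rho)_T$ by Lemma \ref{lemma_vanish}, controls $B^*(f)-B^*(f_\rho)$ by Muirhead's inequality, and then only needs $Y(n)=\sum_{i,j}|\Delta_{i,n}^{(1)}X^{(1)}|^{p_1}|\Delta_{j,n}^{(2)}X^{(2)}|^{p_2}\mathds{1}_{\{\mathcal{I}_{i,n}^{(1)}\cap\mathcal{I}_{j,n}^{(2)}\neq\emptyset\}}=O_\mathbb{P}(1)$, so that the factor $K_\rho\to0$ does the rest; you instead truncate the \emph{process} into big jumps plus remainder and split the index set into jump pairs and bulk pairs, re-deriving the identification of the jump limit directly (your treatment of diagonal, companion and idiosyncratic pairs is essentially the content of Lemma \ref{lemma_vanish} specialised to this situation). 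Your route is self-contained and slightly more explicit about where the $o(|x|^{p_1}|y|^{p_2})$-condition enters the bulk; the paper's route is shorter because it recycles the scheme of Theorem \ref{general_1plusBeta} and does not need any uniform-increment control for the small-jump part.

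One point needs repair: the order of your iterated limits. Your bulk bound is only valid on the event that all bulk products satisfy $|\Delta_{i,n}^{(1)}X^{(1)}\Delta_{j,n}^{(2)}X^{(2)}|\leq\delta_\eta$, and your own uniform-oscillation statement gives this only in the regime $\lim_{\alpha\downarrow0}\limsup_n$, i.e.\ with $\alpha$ sent to zero (or chosen of order $\sqrt{\delta_\eta}$) for \emph{fixed} $\eta$. Taking $n\to\infty$, then $\eta\downarrow0$, then $\alpha\downarrow0$, as you write, breaks this: for fixed $\alpha$ and $\delta_\eta<\alpha^2$ the probability that some increment of $X(\alpha)$ exceeds $\sqrt{\delta_\eta}$ does not vanish. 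Swapping the two middle limits (or choosing $\alpha\leq\sqrt{\delta_\eta}/2$ from the start) fixes this without affecting the jump part, whose $n\to\infty$, $\alpha\downarrow0$ limits do not involve $\eta$; with that correction the argument goes through.
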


Here \eqref{cons_cond_1plusBeta_end1} ensures that as $n$ tends to infinity the maximal number of observations of the process $X^{(3-l)}$ during one observation interval of $X^{(l)}$ is bounded. This yields that the ratio of the observation frequencies of the two processes is also bounded as $n \rightarrow \infty$ and cannot tend to infinity as in Example \ref{ex_min_req2}.

\begin{example}
Consider the case where $X^{(1)}$ and $X^{(2)}$ are observed alternately. In this case we have $$\sum_{j \in \mathbb{N}} 
\mathds{1}_{\{\mathcal{I}_{i,n}^{(l)} \cap \mathcal{I}_{j,n}^{(3-l)}\}} \leq 2$$
for $l=1,2$ and all $i,n$. 

Note that, although this goes along with a data reduction, the statistician may always use only a subset of all available observations and hence is able to turn the real observation scheme for example into an obervation scheme where the processes are observed alternately. One way to achieve this is the following: Start with the first observation time $t_{i,n}^{(1)}$ of $X^{(1)}$ and set $\tilde{t}_{1,n}^{(1)}=t_{1,n}^{(1)}$, then take the smallest observation time of $X^{(2)}$ larger than $\tilde{t}_{i,n}^{(1)}$ and set $\tilde{t}_{1,n}^{(2)}=\inf\{t_{i,n}^{(2)}|t_{i,n}^{(2)}>\tilde{t}_{1,n}^{(1)}\}$. Further set $\tilde{t}_{2,n}^{(1)}=\inf\{t_{i,n}^{(1)}|t_{i,n}^{(1)}>\tilde{t}_{1,n}^{(2)}\}$ and define recursively the new observation scheme $\tilde{\pi}_n$ by continuing this procedure in the natural way. \qed 
\end{example}

\section{Normalized functionals}\label{sec:normFunc}
\def\theequation{4.\arabic{equation}}
\setcounter{equation}{0}

In Section \ref{sec:nonnormFunc} we have seen that the functional $V(f,\pi_n)_T$ converges to a limit which depends only on the jump part of $X$ for functions $f$ that decay sufficiently fast in a neighbourhood of zero. This is necessary because we need that for such functions $f$ the contribution of the continuous part in $V(f,\pi_n)_T$ becomes asymptotically negligible. The jump part has the property that the magnitude of its increments remains constant as $|\pi_n|_T \rightarrow 0$ while for the continuous martingale part $$C_t =\int_0^t \sigma_s dW_s,~~t \geq 0,$$ the magnitude of the \glqq \textit{normalized}\grqq\ increment $|\mathcal{I}_{i,n}^{(l)}|^{-1/2}\Delta_{i,n}^{(l)} C^{(l)}$ remains constant. Hence if we would like to learn something about the continuous part of $X$ it is reasonable to look at functionals of the normalized increments $|\mathcal{I}_{i,n}^{(l)}|^{-1/2}\Delta_{i,n}^{(l)} X^{(l)}$.

As an illustration for the upcoming results consider the toy example
\begin{align}\label{example_2dim_BM}
X_t^{toy}=\sigma W_t
\end{align}
where the volatility matrix
\begin{align*}
\sigma= \begin{pmatrix}
\sigma^{(1)} &0
\\ \rho \sigma^{(2)} &\sqrt{1-\rho^2} \sigma^{(2)}
\end{pmatrix}
\end{align*}
is constant in time with $\sigma^{(1)},\sigma^{(2)} > 0$ and $\rho \in [-1,1]$. Suppose the observation scheme is exogenous as in Definition \ref{def_exo} and synchronous. Then we have with the notation from \eqref{synch_times}
\begin{align}\label{local_norm_1d}
 \sum_{i:t_{i,n} \leq T} |\mathcal{I}_{i,n}|f(|\mathcal{I}_{i,n}|^{-1/2} \Delta_{i,n} X^{toy}) =  \sum_{i:t_{i,n} \leq T} |\mathcal{I}_{i,n}|f(\sigma Z_i^n) \overset{\mathbb{P}}{\longrightarrow} T \mathbb{E}[f(\sigma Z)]
\end{align}
because of $|\pi_n|_T \overset{\mathbb{P}}{\longrightarrow} 0$, where $Z$ and $Z_i^n=|\mathcal{I}_{i,n}|^{-1/2}\Delta_{i,n} W$, $i \in \mathbb{N}_0$, are i.i.d.\ two-dimensional standard normal random variables for each $n \in \mathbb{N}$. Functionals of this form are discussed in Section 14.2 of \cite{JacPro12}. Two straightforward generalizations of this approach to the setting of asynchronous observation times lead to functionals of the form 
\begin{gather}\label{local_norm_2d}
 ~~\sum_{i,j:t_{i,n}^{(1)} \vee t_{j,n}^{(2)} \leq T} (|\mathcal{I}_{i,n}^{(1)}||\mathcal{I}_{j,n}^{(2)}|)^{1/2}f(|\mathcal{I}_{i,n}^{(1)}|^{-1/2} \Delta_{i,n}^{(1)} X^{(1)},|\mathcal{I}_{j,n}^{(2)}|^{-1/2}\Delta_{j,n}^{(2)} X^{(2)})\mathds{1}_{\{\mathcal{I}_{i,n}^{(1)} \cap \mathcal{I}_{j,n}^{(2)}\neq \emptyset\}}~~~~~~~  \raisetag{32pt}
\end{gather}
and\begin{gather}\label{local_norm_2d_2}
 ~~\sum_{i,j:t_{i,n}^{(1)} \vee t_{j,n}^{(2)} \leq T} |\mathcal{I}_{i,n}^{(1)} \cap \mathcal{I}_{j,n}^{(2)}| f(|\mathcal{I}_{i,n}^{(1)}|^{-1/2} \Delta_{i,n}^{(1)} X^{(1)},|\mathcal{I}_{j,n}^{(2)}|^{-1/2}\Delta_{j,n}^{(2)} X^{(2)})\mathds{1}_{\{\mathcal{I}_{i,n}^{(1)} \cap \mathcal{I}_{j,n}^{(2)}\neq \emptyset\}}.~~~~~~~  \raisetag{32pt}
\end{gather}
Here the main difference compared to the functional from \eqref{local_norm_1d} in the synchronous setting is that the law of $(|\mathcal{I}_{i,n}^{(1)}|^{-1/2} \Delta_{i,n}^{(1)} X^{(1)},|\mathcal{I}_{j,n}^{(2)}|^{-1/2}\Delta_{j,n}^{(2)} X^{(2)})$ is in general not independent of the observation scheme $\pi_n$. This property is due to the fact that e.g.\ the correlation of $|\mathcal{I}_{i,n}^{(1)}|^{-1/2} \Delta_{i,n}^{(1)} X^{{toy},(1)}$ and $|\mathcal{I}_{j,n}^{(2)}|^{-1/2}\Delta_{j,n}^{(2)} X^{{toy},(2)}$ equals
\begin{align}\label{overlap_depend}
\rho \frac{|\mathcal{I}_{i,n}^{(1)} \cap \mathcal{I}_{j,n}^{(2)}|}{|\mathcal{I}_{i,n}^{(1)}|^{1/2}|\mathcal{I}_{j,n}^{(2)}|^{1/2}}
\end{align}
as the increments of $X^{{toy},(1)}$ and $X^{{toy},(2)}$ are correlated only over the overlapping part ${\mathcal{I}_{i,n}^{(1)} \cap \mathcal{I}_{j,n}^{(2)}}$. This difference is also the main reason why it is more difficult to derive convergence results for normalized functionals as we will see later on. Further, regarding \eqref{local_norm_2d} the quantity
\begin{align*}
\sum_{i,j:t_{i,n}^{(1)} \vee t_{j,n}^{(2)} \leq T} (|\mathcal{I}_{i,n}^{(1)}||\mathcal{I}_{j,n}^{(2)}|)^{1/2}\mathds{1}_{\{\mathcal{I}_{i,n}^{(1)} \cap \mathcal{I}_{j,n}^{(2)}\neq \emptyset\}}
\end{align*}
might diverge for $|\pi_n|_T \rightarrow 0$ as has been shown in Example \ref{ex_min_req2b}. 

Due to these observations we will pick another approach where we use a global normalization instead of locally normalizing each $\Delta_{i,n}^{(l)}X^{(l)}$ with $|\mathcal{I}_{i,n}^{(l)}|^{1/2}$. Precisely, we have to assume that the \glqq average\grqq\ observation frequency increases with rate $n$. We then look at functionals of the form
\begin{align}\label{Vbar_mot}
\sum_{i,j:t_{i,n}^{(1)} \vee t_{j,n}^{(2)} \leq T} n^{-1}f(n^{1/2} \Delta_{i,n}^{(1)} X^{(1)},n^{1/2}\Delta_{j,n}^{(2)} X^{(2)})\mathds{1}_{\{\mathcal{I}_{i,n}^{(1)} \cap \mathcal{I}_{j,n}^{(2)}\neq \emptyset\}}. 
\end{align} 
Such functionals also appear to occur more naturally in applications; compare \cite{MarVet18a, MarVet18b}. 

The most common functions for which the functionals \eqref{Vbar_mot} are studied are the power functions ${g_p(x)=x^p}$, $\overline{g}_p=|x|^p$ and $f_{(p_1,p_2)}=x_1^{p_1}x_2^{p_2}$, ${\overline{f}_{(p_1,p_2)}=|x_1|^{p_1}|x_2|^{p_2}}$ where $p,p_1,p_2 \geq 0$. Those functions are members of the following more general classes of functions; compare Section 3.4.1 in \cite{JacPro12}.
\begin{definition}\label{def_pos_hom}
A function $f:\mathbb{R}^d \rightarrow \mathbb{R}$ is \textit{called positively homogeneous of degree $p\geq 0$}, if $f(\lambda x)=\lambda^p f(x)$ for all $x \in \mathbb{R}^d$ and $\lambda\geq 0$. Further $f$ is \textit{called positively homogeneous with degree $p_i \geq 0$ in the $i$-th argument} if the function $ x \mapsto f(x_1, \ldots, x_{i-1},x,x_{i+1}, \ldots ,x_d)$ is positively homogeneous of degree $p_i$ for any choice of $(x_1, \ldots, x_{i-1},x_{i+1}, \ldots ,x_d)\in \mathbb{R}^{d-1}$. \qed
\end{definition}

If the function $f$ is positively homogeneous with degree $p_1$ in the first argument and with degree $p_2$ in the second argument, \eqref{Vbar_mot} becomes
\begin{align*}
n^{(p_1+p_2)/2-1}\sum_{i,j:t_{i,n}^{(1)} \vee t_{j,n}^{(2)} \leq T} f( \Delta_{i,n}^{(1)} X^{(1)},\Delta_{j,n}^{(2)} X^{(2)})\mathds{1}_{\{\mathcal{I}_{i,n}^{(1)} \cap \mathcal{I}_{j,n}^{(2)}\neq \emptyset\}}. 
\end{align*} 
As we are going to derive results only for such functions we denote by
\begin{align}\label{def_norm_func_2d}
\overline{V}(p,f,\pi_n)_T=n^{p/2-1}\sum_{i,j:t_{i,n}^{(1)}\vee t_{j,n}^{(2)} \leq T} f(\Delta_{i,n}^{(1)}X^{(1)},\Delta_{j,n}^{(2)}X^{(2)}) \mathds{1}_{\{\mathcal{I}_{i,n}^{(1)} \cap \mathcal{I}_{j,n}^{(2)} \neq \emptyset\}},
\end{align}
$f:\R^2 \rightarrow \R$, the functional whose asymptotics we are going to study in this section. Further we set
\begin{align}\label{def_norm_func_1d}
\overline{V}^{(l)}(p,g,\pi_n)_T=n^{p/2-1}\sum_{i:t_{i,n}^{(l)}\leq T} g(\Delta_{i,n}^{(l)}X^{(l)}),~l=1,2,
\end{align}
for functions $g:\R \rightarrow \R$. As in Section \ref{sec:nonnormFunc} we will also derive an asymptotic result for $\overline{V}^{(l)}(p,g,\pi_n)_T$ to compare the results in the setting of asynchronously observed bivariate processes to those in simpler settings. 

To describe the limits of the normalized functionals \eqref{def_norm_func_2d} and \eqref{def_norm_func_1d} in the upcoming results we need to introduce some notation. Denote by
\begin{align*}
m_\Sigma(h)=\mathbb{E}[h(Z)],~Z\sim N(0,\Sigma),
\end{align*}
the expectation of a function $h:\mathbb{R}^d \rightarrow \mathbb{R}$ evaluated at a $d$--dimensional centered normal distributed random variable with covariance matrix $\Sigma$. Further we define the expressions
\begin{equation}\label{defintion_Gp_Hp}
\begin{aligned}
&G^{(l),n}_p(t)=n^{p/2-1}\sum_{i:t_{i,n}^{(l)}\leq t} \big|\mathcal{I}_{i,n}^{(l)}\big|^{p/2}, 
\\&G_{p_1,p_2}^n(t)=n^{(p_1+p_2)/2-1}\sum_{i,j:t_{i,n}^{(1)} \vee t_{j,n}^{(2)}\leq t} |\mathcal{I}_{i,n}^{(1)}|^{p_1/2}|\mathcal{I}_{j,n}^{(2)}|^{p_2/2}
\mathds{1}_{\{\mathcal{I}_{i,n}^{(1)}\cap \mathcal{I}_{j,n}^{(2)} \neq \emptyset\}},
\\&H_{k,m,p}^n(t)=n^{p/2-1}\sum_{i,j:t_{i,n}^{(1)} \vee t_{j,n}^{(2)}\leq t}|\mathcal{I}_{i,n}^{(1)} \setminus \mathcal{I}_{j,n}^{(2)}|^{k/2}|\mathcal{I}_{j,n}^{(2)} \setminus \mathcal{I}_{i,n}^{(1)}|^{m/2} 
\\ &~~~~~~~~~~~~~~~~~~~~~~~~~~~~~~~~~~~~~\times|\mathcal{I}_{i,n}^{(1)}\cap\mathcal{I}_{j,n}^{(2)}|^{(p-(k+m))/2}
\mathds{1}_{\{\mathcal{I}_{i,n}^{(1)}\cap \mathcal{I}_{j,n}^{(2)} \neq \emptyset\}},
\end{aligned}\end{equation}
whose limits, if they exist, will occur in the limits of $\overline{V}^{(l)}(p,g,\pi_n)_T$, $\overline{V}(p,f,\pi_n)_T$.

We start with a result for $\overline{V}(p,f,\pi_n)_T$ under the restriction that the observations are synchronous as in \eqref{synch_times}. If we apply a function $f:\mathbb{R}^2 \rightarrow \mathbb{R}$ which is positively homogeneous of degree $p$ to our toy example \eqref{example_2dim_BM} we get 
\begin{align*}
\mathbb{E}[\overline{V}(p,f,\pi_n)_T|\mathcal{S}]&= n^{p/2-1}\sum_{i: t_{i,n} \leq T} |\mathcal{I}_{i,n}|^{p/2} \mathbb{E}[f(|\mathcal{I}_{i,n}|^{-1/2}\Delta_{i,n}X^{toy})|\mathcal{S}]
\\&= m_{\sigma \sigma^*}(f)G_p^{(1),n}(T)
\end{align*}
where $\mathcal{S}$ denotes the $\sigma$-algebra generated by $\{\pi_n:n \in \mathbb{N}\}$. Therefore it appears to be a necessary condition that $G_p^{(1),n}(T)$ converges in order for $\overline{V}(p,f,\pi_n)_T$ to converge as well. This reasoning also carries over to the case of non-constant $\sigma_s$ via an approximation of $\sigma_s$ by piecewise constant stochastic processes. We then obtain the following result which covers the whole class of positively homogeneous functions $f: \mathbb{R}^2 \rightarrow \mathbb{R}$.

\begin{theorem}\label{theo_norm_2d_sync}
Let $p \geq 0$ and suppose that Condition \ref{cond_consistency} is fulfilled, and that the observation scheme is exogenous and synchronous. Further assume that
\begin{align}\label{theo_norm_2d_sync_cond}
G^{(1),n}_p(t)=G^{(2),n}_p(t) \overset{\mathbb{P}}{\longrightarrow} G_p(t),~~~ t \in [0,T],
\end{align}
for a (possibly random) continuous function $G_p:[0,T]\rightarrow \mathbb{R}_{\geq 0}$ and that we have one of the following two conditions:
\begin{itemize}
\item[a)] $p \in [0,2)$,
\item[b)] $p \geq 2$ and $X$ is continuous.
\end{itemize}
Then for all continuous positively homogeneous functions $f:\mathbb{R}^2 \rightarrow \mathbb{R}$ of degree $p$ it holds that
\begin{align*}
\overline{V}(p,f,\pi_n)_T \overset{\mathbb{P}}{\longrightarrow} \int_0^T m_{c_s}(f) dG_p(s)
\end{align*}
where $c_s=\sigma_s\sigma_s^*$.
\end{theorem}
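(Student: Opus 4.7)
The plan is to reduce the double sum to a synchronous one-dimensional sum, strip off drift and jump contributions, approximate $\sigma$ by a piecewise-constant process, and finally apply a conditional law of large numbers that exploits the exogeneity of $\pi_n$.

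Under synchronicity one has $\mathcal{I}_{i,n}^{(1)}=\mathcal{I}_{i,n}^{(2)}=\mathcal{I}_{i,n}$, so the overlap indicator forces $i=j$ and
\begin{align*}
\overline{V}(p,f,\pi_n)_T=n^{p/2-1}\sum_{i:t_{i,n}\leq T}f\bigl(\Delta_{i,n}^{(1)}X^{(1)},\Delta_{i,n}^{(2)}X^{(2)}\bigr).
\end{align*}
By a standard localization I may assume that $b$, $\sigma$ and $\Gamma$ are uniformly bounded. Continuity and positive homogeneity of degree $p$ yield $|f(x)|\leq K\|x\|^p$ and, for $p\geq 1$, $|f(x)-f(y)|\leq K(\|x\|^{p-1}+\|y\|^{p-1})\|x-y\|$ (a H\"older-type bound handles $p<1$). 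Writing $X=X_0+B+C+J$ with $C_t=\int_0^t\sigma_s dW_s$, standard moment estimates combined with $|\pi_n|_T\to 0$ and $G^{(1),n}_p(T)=O_{\Prob}(1)$ show that the drift and the drift/$C$ cross-terms contribute $o_{\Prob}(1)$. In case (a), $p<2$ gives $n^{p/2-1}\to 0$, so the finitely many non-negligible jumps and their cross-terms with $C$ are also negligible; in case (b) $J\equiv 0$ by hypothesis. Hence it suffices to work with $X=C$.

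Next, I fix $m\in\mathbb{N}$ and a deterministic partition $0=u_0<\cdots<u_K=T$ of mesh at most $1/m$, and set $\sigma^{(m)}_s=\sigma_{u_{k-1}}$ on $[u_{k-1},u_k)$, $C^{(m)}_t=\int_0^t\sigma^{(m)}_s dW_s$. The Burkholder--Davis--Gundy inequality together with the Lipschitz-type bound on $f$ shows that replacing $C$ by $C^{(m)}$ introduces an error controlled by $\sup_{s\leq T}\|\sigma_s-\sigma^{(m)}_s\|^{1\wedge p}$ times an $O_{\Prob}(1)$ factor, which vanishes as $m\to\infty$ since $\sigma$ is c\`adl\`ag. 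On each interval $(u_{k-1},u_k]$ positive homogeneity of $f$ yields
\begin{align*}
n^{p/2-1}\sum_{i:t_{i,n}\in(u_{k-1},u_k]}f\bigl(\Delta_{i,n}C^{(m)}\bigr)=n^{p/2-1}\sum_{i:t_{i,n}\in(u_{k-1},u_k]}|\mathcal{I}_{i,n}|^{p/2}f\bigl(\sigma_{u_{k-1}}Z_i^n\bigr),
\end{align*}
with $Z_i^n=|\mathcal{I}_{i,n}|^{-1/2}\Delta_{i,n}W$. Exogeneity of $\pi_n$ implies that, conditionally on $\mathcal{S}$ and $\sigma_{u_{k-1}}$, the $Z_i^n$ are i.i.d.\ $\mathcal{N}(0,I_2)$, so $\Exp[f(\sigma_{u_{k-1}}Z_i^n)\mid\mathcal{S},\sigma_{u_{k-1}}]=m_{c_{u_{k-1}}}(f)$.

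A conditional law of large numbers, whose conditional variance is uniformly dominated because boundedness of $\sigma$ gives $f(\sigma_{u_{k-1}}Z_i^n)$ moments of all orders, then yields
\begin{align*}
n^{p/2-1}\sum_{i:t_{i,n}\in(u_{k-1},u_k]}|\mathcal{I}_{i,n}|^{p/2}\bigl[f(\sigma_{u_{k-1}}Z_i^n)-m_{c_{u_{k-1}}}(f)\bigr]\overset{\Prob}{\longrightarrow}0
\end{align*}
for each $k$. Summing over $k$ and identifying $\sum_k m_{c_{u_{k-1}}}(f)[G^{(1),n}_p(u_k)-G^{(1),n}_p(u_{k-1})]$ via \eqref{theo_norm_2d_sync_cond} as a Riemann--Stieltjes sum converging to $\int_0^T m_{c_s}(f)\,dG_p(s)$ (by continuity of $G_p$, of $\Sigma\mapsto m_\Sigma(f)$, and right-continuity of $\sigma$), and finally letting $m\to\infty$, gives the claim. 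The main obstacle will be to make the $m\to\infty$ passage uniform in $n$: this is precisely where the dichotomy between (a) and (b) enters, since for $p\geq 2$ any surviving jump contribution would obstruct this uniformity, forcing $X$ to be continuous in that regime.
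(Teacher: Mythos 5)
Your overall strategy coincides with the paper's: localize, reduce to the continuous martingale part $C$, discretize $\sigma$ in time, apply a conditional law of large numbers using exogeneity of $\pi_n$, and identify the limit via Riemann--Stieltjes sums against $G_p^n$. The reduction to $X=C$, the blockwise LLN and the identification of the limit are all workable as you describe them. However, there is one genuine gap in the $\sigma$-discretization step. You claim that replacing $C$ by $C^{(m)}$ produces an error controlled by $\sup_{s\leq T}\|\sigma_s-\sigma^{(m)}_s\|^{1\wedge p}$ times an $O_{\mathbb{P}}(1)$ factor, ``which vanishes as $m\to\infty$ since $\sigma$ is c\`adl\`ag.'' This is false: Condition \ref{cond_consistency} only assumes $\sigma$ c\`adl\`ag, so $\sigma$ may jump, and for a discontinuous c\`adl\`ag path the uniform discretization error $\sup_{s\leq T}\|\sigma_s-\sigma^{(m)}_s\|$ does \emph{not} tend to zero (take $\sigma_s=\mathds{1}_{\{s\geq\tau\}}$ with $\tau$ not a partition point; the sup equals $1$ for every $m$). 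This is precisely the difficulty that Proposition \ref{lemma_norm_sync_r} of the paper is built to handle: one must split off the finitely many jumps of $\sigma$ exceeding a threshold $\delta$, bound their contribution by $N$ times the modulus of continuity of $G_p^n$ over intervals of length $2^{-r}+|\pi_n|_T$ (which vanishes because $G_p$ is continuous), and control the remaining oscillation by $\delta$ plus an integrated, rather than uniform, error. Your argument as written only covers the case of continuous (or uniformly approximable) $\sigma$.

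A second, more minor imprecision: in case (a) you dispose of the jump part of $X$ by noting that $n^{p/2-1}\to 0$ kills ``the finitely many non-negligible jumps.'' This handles the large-jump part $N(q)$, but the compensated small-jump martingale $M(q)$ has infinitely many jumps and is not negligible for that reason alone; one needs the estimate $\mathbb{E}\big[\|M(q)_{s+t}-M(q)_s\|^p\big]\leq K_p t^{p/2\wedge 1}(e_q)^{p/2\wedge 1}$ with $e_q\to 0$ as $q\to\infty$ (inequality \eqref{elem_ineq_M}), combined with a H\"older argument as in Proposition \ref{lemma_norm_sync}. Relatedly, the dichotomy between (a) and (b) enters in this reduction to $X=C$, not --- as you suggest at the end --- in making the $m\to\infty$ passage uniform in $n$; that passage is obstructed by the jumps of $\sigma$, not of $X$, and requires the same care in both cases.
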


As a corollary, we directly obtain the following convergence result for the functionals $V^{(l)}(p,g,\pi_n)_T$. 

\begin{corollary}\label{theo_norm_1d}
Let $l=1$ or $l=2$, $p \geq 0$, and suppose that Condition \ref{cond_consistency} is fulfilled and that the observation scheme is exogenous. Further assume that
\begin{align}\label{theo_norm_1d_cond}
G^{(l),n}_p(t) \overset{\mathbb{P}}{\longrightarrow} G_p^{(l)}(t),~~~ t \in [0,T],
\end{align}
for a (possibly random) continuous function $G_p^{(l)}:[0,T]\rightarrow \mathbb{R}_{\geq 0}$ and that we have one of the following two conditions:
\begin{itemize}
\item[a)] $p \in [0,2)$,
\item[b)] $p \geq 2$ and $X^{(l)}$ is continuous.
\end{itemize}
Then for all positively homogeneous functions $g:\mathbb{R} \rightarrow \mathbb{R}$ of degree $p$ it holds that
\begin{align*}
\overline{V}^{(l)}(p,g,\pi_n)_T \overset{\mathbb{P}}{\longrightarrow} m_1(g)\int_0^T (\sigma_s^{(l)})^p dG_p^{(l)}(s).
\end{align*}
\end{corollary}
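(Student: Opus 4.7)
To prove Corollary \ref{theo_norm_1d}, my plan is to reduce it to Theorem \ref{theo_norm_2d_sync} by embedding the one-dimensional functional into a two-dimensional synchronous setting. The key observation is that, under synchronous observations, the indicator $\mathds{1}_{\{\mathcal{I}_{i,n}^{(1)} \cap \mathcal{I}_{j,n}^{(2)}\neq \emptyset\}}$ in $\overline{V}(p,f,\pi_n)_T$ forces $i=j$, so the double sum collapses to exactly the single sum defining $\overline{V}^{(l)}(p,g,\pi_n)_T$.

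Concretely, I would enlarge the probability space to support a one-dimensional standard Brownian motion $B$ independent of $(X,\mathcal{S})$, and then consider, in case $l=1$, the two-dimensional It\^o semimartingale $\tilde X=(X^{(1)},B)$ together with the synchronous observation scheme $\tilde\pi_n$ defined by $\tilde t_{i,n}^{(1)}=\tilde t_{i,n}^{(2)}=t_{i,n}^{(1)}$ (and analogously for $l=2$). Then $\tilde X$ satisfies Condition \ref{cond_consistency}, its diffusion matrix $\tilde\sigma_s$ has first diagonal entry $\sigma_s^{(l)}$ and second diagonal entry $1$, and $\tilde\pi_n$ is exogenous and synchronous with the same mesh as $\pi_n$. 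Moreover, because $|\tilde{\mathcal{I}}_{i,n}^{(1)}|=|\tilde{\mathcal{I}}_{i,n}^{(2)}|=|\mathcal{I}_{i,n}^{(l)}|$, the hypothesis \eqref{theo_norm_2d_sync_cond} reduces to the assumed convergence \eqref{theo_norm_1d_cond} with limit $G_p^{(l)}$; in case b) the process $\tilde X$ is continuous because $B$ is continuous and $X^{(l)}$ is continuous by assumption.

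I then apply Theorem \ref{theo_norm_2d_sync} to $\tilde X$, $\tilde\pi_n$, and the auxiliary function $\tilde f\colon\mathbb{R}^2\to\mathbb{R}$ defined by $\tilde f(x_1,x_2)=g(x_1)$ (or $\tilde f(x_1,x_2)=g(x_2)$ for $l=2$). This $\tilde f$ is positively homogeneous of degree $p$ and continuous: positive homogeneity with $p>0$ together with $g(0)=0$ forces continuity of $g$ at the origin, and the case $p=0$ reduces to an elementary counting argument. Since under the synchronous scheme $\tilde\pi_n$ the functional $\overline V(p,\tilde f,\tilde\pi_n)_T$ coincides pathwise with $\overline V^{(l)}(p,g,\pi_n)_T$, the theorem yields
\begin{align*}
\overline{V}^{(l)}(p,g,\pi_n)_T \;\overset{\mathbb{P}}{\longrightarrow}\; \int_0^T m_{\tilde c_s}(\tilde f)\,dG_p^{(l)}(s),
\end{align*}
with $\tilde c_s=\tilde\sigma_s\tilde\sigma_s^*$. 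Since $\tilde f$ depends only on its first argument, $m_{\tilde c_s}(\tilde f)=\mathbb{E}[g(\sigma_s^{(l)} Z)]$ for $Z\sim\mathcal{N}(0,1)$, and positive homogeneity together with $\sigma_s^{(l)}\geq 0$ turns this into $(\sigma_s^{(l)})^p m_1(g)$, matching the asserted limit.

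The main obstacle is purely cosmetic: all probabilistic work is absorbed into Theorem \ref{theo_norm_2d_sync}, and what remains is to check carefully that the artificial two-dimensional construction inherits every hypothesis (exogeneity, synchronicity, continuity in case b), convergence of $G_p^{(l),n}$) and that the marginal normal expectation correctly isolates the factor $(\sigma_s^{(l)})^p$. No new estimate or approximation argument beyond those already underlying Theorem \ref{theo_norm_2d_sync} is required.
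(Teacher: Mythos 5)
Your proposal is correct and takes essentially the same route as the paper: both reduce the corollary to Theorem \ref{theo_norm_2d_sync} by embedding $X^{(l)}$ into an artificial bivariate, synchronously observed It\^o semimartingale and checking that exogeneity, condition \eqref{theo_norm_2d_sync_cond} and the continuity/homogeneity requirements carry over. The only (immaterial) difference is the choice of embedding: the paper duplicates the component, taking $\widetilde{X}=(X^{(l)},X^{(l)})^*$ with $f(x_1,x_2)=\text{sgn}(g(x_1))|g(x_1)g(x_2)|^{1/2}$, whereas you adjoin an independent Brownian motion and use $f(x_1,x_2)=g(x_1)$, which avoids the degenerate correlation $\rho\equiv 1$ at the cost of an enlargement of the probability space.
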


\begin{remark}
In Chapter 14 of \cite{JacPro12} synchronous observation schemes of the form
\begin{align*}
t_{i,n}=t_{i-1,n}+\theta^n_{t_{i-1,n}} \varepsilon_{i,n}
\end{align*}
are investigated where $\theta^n=(\theta^n_t)_{t \geq 0}$ is a strictly positive process which is adapted to the filtration $(\mathcal{F}_t^n)_{t \geq 0}$, where $\mathcal{F}_t^{n}$ denotes the smallest $\sigma$-algebra containing the filtration $(\mathcal{F}_t)_{t \geq 0}$ with respect to which $X$ is defined and which has the property that all $t_{i,n}$ are $(\mathcal{F}_t^n)_{t \geq 0}$-stopping times. $\varepsilon_{i,n}$ is supposed to be an i.i.d.\ sequence of positive random variables in $i$ for fixed $n$. The $\varepsilon_{i,n}$ are independent of the process $X$ and its components. If $n \theta^n$ converges in u.c.p.\ to some $(\mathcal{F}_t)_{t\geq 0}$-adapted process $\theta$ and the moments $\mathbb{E}[(\varepsilon_{i,n})^{p/2}]=\kappa_{p/2}^n$ converge to some $\kappa_{p/2} < \infty$ then Lemma 14.1.5 in \cite{JacPro12} yields
\begin{align*}
G_p^n(t) \overset{\mathbb{P}}{\longrightarrow} \kappa_{p/2} \int_0^t (\theta_s)^{p/2-1}ds=:G_p(t) \quad \forall t \in [0,T],
\end{align*}
and using Theorem 14.2.1 in \cite{JacPro12} we conclude
\begin{align*}
\overline{V}(p,f,\pi_n)_T \overset{\mathbb{P}}{\longrightarrow} \kappa_{p/2}\int_0^T m_{c_s}(f)  (\theta_s)^{p/2-1} ds= \int_0^T m_{c_s}(f) dG_p(s)
\end{align*}
under the assumptions on $f$ and $X$ made in Theorem \ref{theo_norm_2d_sync}. The assumptions on the observation scheme in \cite{JacPro12} are weaker than the assumptions made in this paper in the sense that the observation scheme does not have to be exogenous, but there are still strong restrictions on the law of $t_{i,n}-t_{i-1,n}$. $\theta_{t_{i-1,n}}^n$ is known in advance at time $t_{i-1,n}$  and  $(t_{i,n}-t_{i-1,n})/\theta_{t_{i,n}}^n=\varepsilon_{i,n}$ is an exogenous random variable whose law is independent of the other observation times. On the other hand our assumptions allow for observation schemes which do not fulfill the assumptions made in Chapter 14 of \cite{JacPro12}. This is due to the fact that we need no analogon to the i.i.d.\ property of the $\varepsilon_{i,n}$. $G_p^n$ in general already converges to a linear function if the ratio $\mathbb{E}[n^{p/2}|\mathcal{I}_{i,n}|^{p/2}]/\mathbb{E}[n|\mathcal{I}_{i,n}|]$ remains constant; compare Lemma 6.4 in \cite{MarVet18b}. Asynchronous observation schemes are not considered in \cite{JacPro12}. \qed
\end{remark}

In the case of asynchronous observation times it is more difficult to derive results similar to Theorem \ref{theo_norm_2d_sync}. For functions $f$ which are like $f_{(p_1,p_2)}$, $\overline{f}_{(p_1,p_2)}$ positively homogeneous with degree $p_1$ in the first argument and with degree $p_2$ in the second argument it holds that
\begin{multline*}
\mathbb{E}[\overline{V}(p_1+p_2,f,\pi_n)_T|\mathcal{S}]
 =n^{(p_1+p_2)/2-1}\sum_{i,j: t_{i,n}^{(1)} \vee t_{j,n}^{(2)} \leq T} |\mathcal{I}_{i,n}^{(1)}|^{p_1/2}|\mathcal{I}_{j,n}^{(2)}|^{p_2/2} 
\\\times\mathbb{E}[f(|\mathcal{I}_{i,n}^{(1)}|^{-1/2}\Delta_{i,n}^{(1)}X^{(1)},|\mathcal{I}_{j,n}^{(2)}|^{-1/2}\Delta_{j,n}^{(2)}X^{(2)})|\mathcal{S}]
\mathds{1}_{\{\mathcal{I}_{i,n}^{(1)} \cap \mathcal{I}_{j,n}^{(2)} \neq \emptyset \}}.
\end{multline*}
However unlike in the case of synchronous observation times the law of $$(|\mathcal{I}_{i,n}^{(1)}|^{-1/2}\Delta_{i,n}^{(1)}X^{(1)},|\mathcal{I}_{j,n}^{(2)}|^{-1/2}\Delta_{j,n}^{(2)}X^{(2)})$$ is in general not independent of $\pi_n$ as explained in \eqref{overlap_depend}. The process $X^{toy}$ from \eqref{example_2dim_BM} has the simplest form of all processes for which the functionals discussed in this section yield a non-trivial limit and hence it makes sense to first investigate conditions which grant convergence of $\overline{V}(f,\pi_n)_T$ for $X^{toy}$. Also, the arguments used in the proof of Theorem \ref{theo_norm_2d_sync} rely on an approximation of $\sigma_s$ by piecewise constant processes in time, which then makes it possible to use results for processes like $X^{toy}$ with a constant $\sigma_s$. In particular, we need that $f(\Delta_{i,n}^{(1)}X^{toy,(1)},\Delta_{j,n}^{(2)}X^{toy,(2)})$ factorizes into a term that depends only on $\mathcal{S}$ and a term that is independent of $\mathcal{S}$. This technique of proof can be extended to the asynchronous setting whenever we can find a fixed natural number $N \in \mathbb{N}$ and functions $g_k, h_k$ for $k=1,\ldots,N$ such that we can write
\begin{align}\label{sum_factor}
\mathbb{E}[f(\Delta_{i,n}^{(1)}X^{toy,(1)},\Delta_{j,n}^{(2)}X^{toy,(2)})|\mathcal{S}]
=\sum_{k=1}^N g_k(|\mathcal{I}_{i,n}^{(1)}|,|\mathcal{I}_{j,n}^{(2)}|,|\mathcal{I}_{j,n}^{(2)}\cap \mathcal{I}_{j,n}^{(2)}|)h_k(\sigma^{(1)},\sigma^{(2)},\rho)~~~~~~
\end{align}
in our toy example, because in that case we have
\begin{multline*}
\mathbb{E}[\overline{V}(p_1+p_2,f,\pi_n)_T|\mathcal{S}]
 =\sum_{k=1}^K  h_k(\sigma^{(1)},\sigma^{(2)},\rho)\\ \times n^{(p_1+p_2)/2-1}\sum_{i,j: t_{i,n}^{(1)} \vee t_{j,n}^{(2)} \leq T} g_k(|\mathcal{I}_{i,n}^{(1)}|,|\mathcal{I}_{j,n}^{(2)}|,|\mathcal{I}_{j,n}^{(2)}\cap \mathcal{I}_{j,n}^{(2)}|)\mathds{1}_{\{\mathcal{I}_{i,n}^{(1)} \cap \mathcal{I}_{j,n}^{(2)} \neq \emptyset \}}
\end{multline*}
where the right hand side converges if we assume that the expression in the last line converges as $n \rightarrow \infty$ for each $k=1,\ldots,K$. 

It is in general not possible to find a representation of the form \eqref{sum_factor} for an arbitrary function $f$ which is positively homogeneous in both arguments. However, there are two interesting cases where such a representation is available. The first case is when $X^{toy,(1)}$ and $X^{toy,(2)}$ are uncorrelated, i.e.\ if $\rho \equiv 0$ on $[0,T]$, because then
\begin{align*}
\mathbb{E}[f(\Delta_{i,n}^{(1)}X^{toy,(1)},\Delta_{j,n}^{(2)}X^{toy,(2)})|\mathcal{S}]
=(\sigma^{(1)} )^{p_1}(\sigma^{(2)})^{p_2}\mathbb{E}[f(Z,Z')]|\mathcal{I}_{i,n}^{(1)}|^{p_1/2}|\mathcal{I}_{j,n}^{(2)}|^{p_2/2}
\end{align*}
holds for independent standard normal random variables $Z,Z'$. The result obtained in this case is stated in Theorem \ref{theo_norm_2d_uncor}. In the second case we consider the functions $f_{(p_1,p_2)}$ with $p_1,p_2 \in \mathbb{N}_0$. Indeed it holds  
\begin{multline*}
\hspace{-0.3cm} f(\Delta_{i,n}^{(1)}X^{{toy},(1)},\Delta_{j,n}^{(2)}X^{{toy},(2)}) \overset{\mathcal{L}_\mathcal{S}}{=} (\sigma^{(1)} )^{p_1}(\sigma^{(2)})^{p_2}(|\mathcal{I}_{i,n}^{(1)}\setminus \mathcal{I}_{j,n}^{(2)}|^{1/2} Z_1+|\mathcal{I}_{i,n}^{(1)}\cap \mathcal{I}_{j,n}^{(2)}|^{1/2} Z_2)^{p_1}
\\ \times (|\mathcal{I}_{i,n}^{(1)}\cap \mathcal{I}_{j,n}^{(2)}|^{1/2}( \rho Z_2+\sqrt{1-\rho^2} Z_3)+|\mathcal{I}_{j,n}^{(2)}\setminus \mathcal{I}_{i,n}^{(1)}|^{1/2} Z_4)^{p_2} 
\end{multline*}
where $\overset{\mathcal{L}_\mathcal{S}}{=}$ denotes equality of the $\mathcal{S}$-conditional law and $Z_1,\ldots,Z_4$ are i.i.d.\ standard normal random variables. Here the right hand side can be brought into the form \eqref{sum_factor} using the multinomial theorem. The result obtained in this case is stated in Theorem \ref{theo_norm_2d_integer}. Unfortunately, for the functions $\overline{f}_{(p_1,p_2)}$ there exists no similar representation: however, for even $p_1,p_2$ we have $f_{(p_1,p_2)}=\overline{f}_{(p_1,p_2)}$ and Theorem \ref{theo_norm_2d_integer} also applies there.

\begin{theorem}\label{theo_norm_2d_uncor}
Let $p_1,p_2\geq 0$ and suppose that Condition \ref{cond_consistency} is fulfilled, that the observation scheme is exogenous and that we have $\rho \equiv 0$ on $[0,T]$. Further assume that
\begin{align}\label{theo_norm_2d_uncor_cond}
G_{p_1,p_2}^n(t) \overset{\mathbb{P}}{\longrightarrow} G_{p_1,p_2}(t),~ t \in [0,T],
\end{align}
for a (possibly random) continuous function $G_{p_1,p_2}:[0,T]\rightarrow \mathbb{R}_{\geq 0}$ and that we have one of the following two conditions:
\begin{itemize}
\item[a)] $p_1+p_2 \in [0,2)$,
\item[b)] $p_1+p_2 \geq 2$ and $X$ is continuous.
\end{itemize}
Then for all continuous functions $f:\mathbb{R}^2 \rightarrow \mathbb{R}$ which are positively homogeneous of degree $p_1$ in the first argument and positively homogeneous of degree $p_2$ in the second argument it holds that
\begin{align}\label{theo_norm_2d_uncor_cons}
\overline{V}(p_1+p_2,f,\pi_n)_T \overset{\mathbb{P}}{\longrightarrow} m_{I_2}(f)\int_0^T (\sigma_s^{(1)})^{p_1} (\sigma_s^{(2)})^{p_2} d{G_{(p_1,p_2)}}(s).
\end{align}
Here $I_2$ denotes the two-dimensional identity matrix.
\end{theorem}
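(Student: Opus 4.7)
The approach follows the three-step blueprint of Theorem \ref{theo_norm_2d_sync}: reduce to continuous $X$ with bounded coefficients via jump truncation and localisation, approximate $\sigma$ by a piecewise constant process on a deterministic grid, and compute $\mathcal{S}$-conditional moments on each piece. The essential simplification under $\rho\equiv 0$ is that $W^{(1)}$ and $W^{(2)}$ are independent, so together with exogeneity the scaled increments $|\mathcal{I}_{i,n}^{(1)}|^{-1/2}\Delta_{i,n}^{(1)}W^{(1)}$ and $|\mathcal{I}_{j,n}^{(2)}|^{-1/2}\Delta_{j,n}^{(2)}W^{(2)}$ are $\mathcal{S}$-conditionally independent standard normals even when the intervals overlap. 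This produces the single-summand factorisation \eqref{sum_factor} with $g_1(a,b,c)=a^{p_1/2}b^{p_2/2}$ and $h_1(\sigma^{(1)},\sigma^{(2)},\rho)=m_{I_2}(f)(\sigma^{(1)})^{p_1}(\sigma^{(2)})^{p_2}$ alluded to in the discussion preceding the statement.

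In case (b), $X$ is continuous by assumption. In case (a), joint positive homogeneity of $f$ of degree $p_1+p_2<2$ together with continuity gives $|f(x,y)|\leq C\|(x,y)\|^{p_1+p_2}$ on compact sets, so each jump of $X$ contributes at most $O(n^{(p_1+p_2)/2-1})$ to $\overline{V}(p_1+p_2,f,\pi_n)_T$, the number of overlapping interval pairs containing a given time being bounded; the small-jumps part is treated analogously via the compensator bound in Condition \ref{cond_consistency}. Both vanish since $p_1+p_2<2$. Standard localisation then allows us to assume $b,\sigma^{(1)},\sigma^{(2)},\Gamma$ are bounded, after which the drift contributes $O(n^{(p_1+p_2)/2-1}|\pi_n|_T^{(p_1+p_2)/2}G_{p_1,p_2}^n(T))=o_\mathbb{P}(1)$. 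For the grid approximation, fix $0=u_0<\cdots<u_M=T$ of mesh $\eta$ and set $\sigma^\eta_s=\sigma_{u_{k-1}}$ on $[u_{k-1},u_k)$; c\`adl\`ag continuity of $\sigma$ together with continuity of $f$ and the homogeneity bound ensures the replacement error in $\overline{V}$ is arbitrarily small in probability, uniformly in $n$, as $\eta\to 0$, by \eqref{theo_norm_2d_uncor_cond}. Writing $f(u,v)=|u|^{p_1}|v|^{p_2}f(\mathrm{sgn}(u),\mathrm{sgn}(v))$ by positive homogeneity and using the conditional independence on each constant-$\sigma$ piece gives
\begin{align*}
\mathbb{E}\bigl[f(\Delta_{i,n}^{(1)}X^{\eta,(1)},\Delta_{j,n}^{(2)}X^{\eta,(2)})\,\big|\,\mathcal{S}\bigr]=m_{I_2}(f)(\sigma^{(1)}_{u_{k-1}})^{p_1}(\sigma^{(2)}_{u_{k-1)}})^{p_2}|\mathcal{I}_{i,n}^{(1)}|^{p_1/2}|\mathcal{I}_{j,n}^{(2)}|^{p_2/2}.
\end{align*}
Summing over $(i,j)$ and letting $\eta\to 0$ yields the stated limit for $\mathbb{E}[\overline{V}(p_1+p_2,f,\pi_n)_T\mid\mathcal{S}]$.

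The main obstacle is upgrading convergence of the $\mathcal{S}$-conditional mean to convergence of $\overline{V}(p_1+p_2,f,\pi_n)_T$ itself, i.e.\ establishing an $\mathcal{S}$-conditional law of large numbers. Thanks to independence of Brownian increments over disjoint intervals together with $W^{(1)}\perp W^{(2)}$, the $\mathcal{S}$-conditional covariance of $f(\Delta_{i,n}^{(1)}X^\eta,\Delta_{j,n}^{(2)}X^\eta)$ and $f(\Delta_{i',n}^{(1)}X^\eta,\Delta_{j',n}^{(2)}X^\eta)$ vanishes whenever $i\neq i'$ \emph{and} $j\neq j'$, leaving only diagonal and boundedly-many neighbouring terms. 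Using the Gaussian moment bound $\mathbb{E}[|f(\Delta X^\eta)|^2\mid\mathcal{S}]\leq C|\mathcal{I}_{i,n}^{(1)}|^{p_1}|\mathcal{I}_{j,n}^{(2)}|^{p_2}$ and extracting a factor $|\pi_n|_T^{(p_1+p_2)/2}$ via $|\mathcal{I}|^{p_l/2}\leq|\pi_n|_T^{p_l/2}$ in one half of the exponents, the sum of conditional variances is dominated by a constant multiple of $n^{(p_1+p_2)/2-1}|\pi_n|_T^{(p_1+p_2)/2}G_{p_1,p_2}^n(T)$, which is $o_\mathbb{P}(1)$ under \eqref{theo_norm_2d_uncor_cond} and $|\pi_n|_T\to 0$. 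A Chebyshev argument then concludes.
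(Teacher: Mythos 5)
Your overall architecture (reduce to the continuous part, freeze $\sigma$ on a deterministic grid, compute $\mathcal{S}$-conditional moments, conclude by a conditional LLN) matches the paper's, but the step that carries the whole weight — upgrading the conditional mean to convergence of $\overline{V}$ itself — rests on a claim that is false in general. You assert that the $\mathcal{S}$-conditional covariance of $f(\Delta_{i,n}^{(1)}X^{\eta,(1)},\Delta_{j,n}^{(2)}X^{\eta,(2)})$ and the $(i',j')$-term vanishes whenever $i\neq i'$ and $j\neq j'$. After the piecewise-constant approximation the frozen values $\sigma_{u_{k-1}}$ are still \emph{random} and $\mathcal{F}_{u_{k-1}}$-measurable; conditionally on $\mathcal{S}$ alone, terms lying in different blocks are correlated through the path of $\sigma$ (and through the Brownian increments that drive $\sigma$), and you cannot instead condition on the whole path of $\sigma$ without destroying the Gaussianity of the increments, since $\sigma$ and $W$ are dependent. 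The same confusion shows in your display for $\mathbb{E}[f(\cdot)\mid\mathcal{S}]$: its right-hand side is not $\mathcal{S}$-measurable, so it can only be an $\mathcal{S}\vee\mathcal{F}_{u_{k-1}}$-conditional expectation, and only for pairs whose intervals lie inside one block (pairs straddling a grid point need the separate correction the paper absorbs into $\sup_{|t-s|\leq 3|\pi_n|_T}|G^n_{p_1,p_2}(t)-G^n_{p_1,p_2}(s)|$). The paper's way out is exactly the device you are missing: block sums $\xi_k^n$ conditioned on $\mathcal{G}_{k-1}^n=\sigma(\mathcal{F}_{(k-1)T/2^{r_n}}\cup\mathcal{S})$ and the martingale-type Lemma 2.2.12 of \cite{JacPro12}, with the conditional second moments bounded by $K\,G^n_{p_1,p_2}(T)\sup_{|u-s|\leq T2^{-r_n}+|\pi_n|_T}|G^n_{p_1,p_2}(u)-G^n_{p_1,p_2}(s)|$, which vanishes by uniform convergence of the nondecreasing $G^n_{p_1,p_2}$ to a continuous limit. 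Note also that even if the uncorrelatedness held, your variance bound $n^{(p_1+p_2)/2-1}|\pi_n|_T^{(p_1+p_2)/2}G^n_{p_1,p_2}(T)=(n|\pi_n|_T)^{(p_1+p_2)/2-1}|\pi_n|_T\,G^n_{p_1,p_2}(T)$ need not be $o_\mathbb{P}(1)$ in case (b) with $p_1+p_2>2$, because nothing in the assumptions controls $n|\pi_n|_T$; and the pairs sharing one index ($i=i'$, $j\neq j'$) are not ``boundedly many neighbours''.

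The preliminary reduction in case (a) has a second genuine gap: you argue that a jump contributes $O(n^{(p_1+p_2)/2-1})$ because ``the number of overlapping interval pairs containing a given time is bounded''. It is not — this is precisely the phenomenon of Example \ref{ex_min_req2}: a single jump of $X^{(1)}$ at $s\in\mathcal{I}^{(1)}_{i,n}$ enters one summand for every interval $\mathcal{I}^{(2)}_{j,n}$ intersecting $\mathcal{I}^{(1)}_{i,n}$, and that number may diverge; moreover each such summand carries a factor $|\Delta_{j,n}^{(2)}X^{(2)}|^{p_2}$ that must be integrated, not counted. The paper's Proposition \ref{lemma_norm_sync} spends most of its effort exactly here, splitting $X-C$ into $B(q)+M(q)$ and $N(q)$ and using H\"older with exponents tuned to $p_1,p_2$ (e.g.\ $p'=2/(2-p_2)$, and a $\delta$-shifted exponent for the big-jump term) so that every contribution is dominated by $(|\pi_n|_T^{\alpha}+e_q^{\alpha})\,\widetilde{G}^{(d_1,d_2),n}_{p_1,p_2}(T)$ for some $\alpha>0$; ``treated analogously via the compensator bound'' does not substitute for this. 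So both the jump-elimination step and the concentration step need to be replaced by the paper's arguments (or equivalents) before the proof is complete.
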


\begin{theorem}\label{theo_norm_2d_integer}
Let $p_1,p_2 \in \mathbb{N}_0$ and suppose that Condition \ref{cond_consistency} is fulfilled and that the observation scheme is exogenous. Define
\begin{align*}
L(p_1,p_2)= \{ (k,l,m) \in (2 \mathbb{N}_0)^3:k \leq p_1 , l+m \leq p_2 , p_1+p_2-(k+l+m) \in 2 \mathbb{N}_0\}.
\end{align*}
Assume that for all $k,m \in \mathbb{N}_0$ for which an $l\in \mathbb{N}_0$ exists with $(k,l,m) \in L(p_1,p_2)$ there exist (possibly random) continuous functions $H_{(k,m,p_1+p_2)}:[0,T]\rightarrow \mathbb{R}_{\geq 0}$ which fulfill
\begin{align}\label{theo_norm_2d_integer_cond}
H_{k,m,p_1+p_2}^n(t)
\overset{\mathbb{P}}{\longrightarrow} H_{k,m,p_1+p_2}(t),~~~t \in [0,T].
\end{align}
Further we assume that we have one of the following two conditions:
\begin{itemize}
\item[a)] $p_1+p_2 \in \{0,1\}$,
\item[b)] $p_1+p_2 \geq 2$ and $X$ is continuous.
\end{itemize}
 Then
\begin{multline}\label{theo_norm_2d_integer_cons}
\overline{V}(p_1+p_2,f_{(p_1,p_2)},\pi_n)_T 
\\ \overset{\mathbb{P}}{\longrightarrow} \int_0^T (\sigma_s^{(1)})^{p_1} (\sigma_s^{(2)})^{p_2}\Big(\sum_{(k,l,m) \in L(p_1,p_2)} \binom{p_1}{k}\binom{p_2}{l,m}
 m_1(x^k) m_1(x^l)m_1(x^m)\\ \times m_1(x^{p_1+p_2-(k+l+m)}) (1-\rho_s^2)^{l/2}(\rho_s)^{p_2-(l+m)} d{H_{k,m,p_1+p_2}}(s)\Big)
\end{multline}
holds. Here $\binom{p_2}{l,m}$ stands for the multinomial coefficient $p_2!/(l!m!(p_2-l-m)!)$.
\end{theorem}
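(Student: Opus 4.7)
The plan is to follow the strategy outlined in the paragraphs preceding the theorem, exploiting the explicit multinomial expansion of $f_{(p_1,p_2)}$ that is available precisely because $p_1,p_2$ are non-negative integers. The overall architecture mirrors the proofs of Theorems~\ref{theo_norm_2d_sync} and~\ref{theo_norm_2d_uncor}: localization, reduction to the toy example~\eqref{example_2dim_BM}, and then identification of the limit via $\mathcal{S}$-conditional expectations plus a variance bound.

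First I would apply the standard localization so that $b$, $\sigma$, and $\Gamma$ may be assumed bounded. In case (a), $p_1+p_2\in\{0,1\}$; when $(p_1,p_2)=(0,0)$ the functional coincides with $H_{0,0,0}^n(T)$ and the claim reduces to~\eqref{theo_norm_2d_integer_cond}, whereas $(p_1,p_2)\in\{(1,0),(0,1)\}$ forces $L(p_1,p_2)=\emptyset$ so that both sides converge to zero, and for the left-hand side this follows from a Burkholder-type $L^2(\mathcal{S})$-bound combined with $|\pi_n|_T\overset{\mathbb{P}}{\longrightarrow} 0$. In case (b), $X$ is continuous, the drift $\int_0^\cdot b_s\,ds$ contributes increments of order $|\mathcal{I}_{\cdot,n}^{(l)}|$ and is therefore negligible after multiplication by $n^{(p_1+p_2)/2-1}$, so one may replace $X$ by $\int_0^\cdot\sigma_s\,dW_s$; by exogeneity of $\pi_n$ all subsequent computations are carried out conditionally on $\mathcal{S}$.

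I then approximate $\sigma_s$ and $\rho_s$ by piecewise constant processes on a deterministic partition $0=u_0<\dots<u_R=T$, using their c\`adl\`ag property and the boundedness obtained by localization; the observation intervals straddling some $u_r$ are at most $2R$ in number and are easily bounded. On each sub-interval the process behaves like the toy example~\eqref{example_2dim_BM} with constant volatility, and the representation
\begin{align*}
f_{(p_1,p_2)}(\Delta_{i,n}^{(1)}X^{toy,(1)},\Delta_{j,n}^{(2)}X^{toy,(2)})
& \overset{\mathcal{L}_\mathcal{S}}{=} (\sigma^{(1)})^{p_1}(\sigma^{(2)})^{p_2}(\alpha_1 Z_1+\gamma Z_2)^{p_1}\\
&\quad\times(\gamma\rho Z_2+\gamma\sqrt{1-\rho^2}Z_3+\alpha_2 Z_4)^{p_2}
\end{align*}
applies, with $\alpha_1=|\mathcal{I}_{i,n}^{(1)}\setminus\mathcal{I}_{j,n}^{(2)}|^{1/2}$, $\gamma=|\mathcal{I}_{i,n}^{(1)}\cap\mathcal{I}_{j,n}^{(2)}|^{1/2}$, $\alpha_2=|\mathcal{I}_{j,n}^{(2)}\setminus\mathcal{I}_{i,n}^{(1)}|^{1/2}$ and $Z_1,\dots,Z_4$ i.i.d.\ standard normal. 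Expanding both brackets by the multinomial theorem, taking $\mathcal{S}$-conditional expectations, and using independence of the $Z_i$, the exponents $k,l,m$ of $Z_1,Z_3,Z_4$ and the resulting exponent $p_1+p_2-(k+l+m)$ of $Z_2$ must all be even for the corresponding term to survive, which is precisely the defining condition of $L(p_1,p_2)$. The remaining combinatorial and trigonometric factors are $\binom{p_1}{k}\binom{p_2}{l,m}(1-\rho^2)^{l/2}\rho^{p_2-(l+m)}$, while the length factors $\alpha_1^k\alpha_2^m\gamma^{p_1+p_2-(k+m)}$, together with the normalization $n^{(p_1+p_2)/2-1}$ and the sum over $(i,j)$, reproduce exactly $H_{k,m,p_1+p_2}^n(T)$, whose convergence is the hypothesis~\eqref{theo_norm_2d_integer_cond}. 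Summing over the partition and passing its mesh to zero then identifies the integral on the right-hand side of~\eqref{theo_norm_2d_integer_cons}.

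The main obstacle is that the preceding analysis only identifies the $\mathcal{S}$-conditional first moment. To upgrade to convergence in probability I must control the conditional variance: expanding the square produces pairs of index quadruples $((i,j),(i',j'))$, and those whose index intervals are pairwise disjoint contribute only the product of first moments, whereas the diagonal and partially overlapping pairs can be estimated by sums analogous to $H^n_{\cdot,\cdot,\cdot}$ of total degree $p_1+p_2+2$ in the normalized lengths, hence of smaller order relative to the first-moment terms, using $|\pi_n|_T\overset{\mathbb{P}}{\longrightarrow} 0$. Making this combinatorial estimate sharp in the asynchronous setting, where the number of overlapping pairs may depend nontrivially on the observation scheme and the increments of $X^{(1)}$ and $X^{(2)}$ over overlapping intervals are correlated in the manner described by~\eqref{overlap_depend}, is the most delicate step of the proof; once it is in place, $L^2(\mathcal{S})$-convergence combined with the first-moment calculation yields the stated convergence in probability.
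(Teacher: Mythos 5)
Your identification of the limit follows the paper's route almost exactly: localization, elimination of drift and jumps (the paper packages this as Proposition \ref{lemma_norm_sync}, valid since $p_1+p_2<2$ in case (a) and $X$ is continuous in case (b)), piecewise-constant approximation of $\sigma$ and $\rho$ (Proposition \ref{lemma_norm_sync_r} and the discretization \eqref{discret_r}), and then the decomposition of the Brownian increments over $\mathcal{I}_{i,n}^{(1)}\setminus\mathcal{I}_{j,n}^{(2)}$, $\mathcal{I}_{i,n}^{(1)}\cap\mathcal{I}_{j,n}^{(2)}$, $\mathcal{I}_{j,n}^{(2)}\setminus\mathcal{I}_{i,n}^{(1)}$ combined with the multinomial theorem, which produces precisely the index set $L(p_1,p_2)$, the combinatorial factors and the functions $H^n_{k,m,p_1+p_2}$. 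That part is fine.

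The genuine gap is the concentration step, which you yourself flag as ``the most delicate'' and then justify only heuristically. Your claim that the diagonal and overlapping pairs contribute terms ``of total degree $p_1+p_2+2$ in the normalized lengths, hence of smaller order'' is not a valid argument in the asynchronous setting: the covariance of two summands indexed by overlapping quadruples is only bounded by $K\,|\mathcal{I}_{i,n}^{(1)}|^{p_1/2}|\mathcal{I}_{j,n}^{(2)}|^{p_2/2}|\mathcal{I}_{i',n}^{(1)}|^{p_1/2}|\mathcal{I}_{j',n}^{(2)}|^{p_2/2}$, which has total degree $2(p_1+p_2)$ against the normalization $n^{p_1+p_2-2}$ --- exactly the same order as the square of the first moment --- and the number of quadruples $(i',j')$ overlapping a fixed $(i,j)$ is unbounded when one process is sampled much more frequently than the other. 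What actually makes these terms negligible is not a degree count but the fact that for fixed $(i,j)$ all overlapping $(i',j')$ live in a time window of width $O(|\pi_n|_T)$, so the off-diagonal contribution is bounded by $G^n_{p_1,p_2}(T)\cdot\sup_{|t-s|\le K|\pi_n|_T}|G^n_{p_1,p_2}(t)-G^n_{p_1,p_2}(s)|$, which vanishes only because $G^n_{p_1,p_2}$ converges uniformly to a \emph{continuous} limit. The paper obtains precisely this bound by grouping the summands into blocks over a dyadic partition and invoking the martingale-difference criterion (Lemma 2.2.12 of \cite{JacPro12}), as in Step 2 of the proof of Theorem \ref{theo_norm_2d_uncor}; your direct pair-expansion can be made to work, but only by supplying this small-window/modulus-of-continuity argument, which is missing from your sketch and is the crux of the proof.
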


\begin{example}\label{theo_norm_2d_integer_example} In this example we will use Theorem \ref{theo_norm_2d_integer} to find the limit of ${\overline{V}(p_1+p_2,f_{(p_1,p_2)},\pi_n)_T}$ for a few non-trivial cases with small $p_1,p_2$. For $p_1=p_2=1$ the set $L(1,1)$ contains only $(0,0,0)$ and we get 
\begin{align*}
\overline{V}(2,f_{(1,1)},\pi_n)\overset{\mathbb{P}}{\longrightarrow}\int_0^T \rho_s \sigma^{(1)}_s \sigma^{(2)}_s d{H_{0,0,2}}(s)=\int_0^T \rho_s \sigma^{(1)}_s \sigma^{(2)}_s ds
\end{align*}
as $H_{0,0,2}(t)=t$. Hence $\overline{V}(2,f_{(1,1)},\pi_n)$ converges to the covariation of $X^{(1)},X^{(2)}$ for continuous processes $X$ and we have retrieved \eqref{hayyos_est} for continuous semimartingales $X$. For $p_1=1$, $p_2=2$ we get $L(1,2)=\emptyset$ as $k,l,m$ and $3-(k+l+m)$ cannot be all divisible by $2$. Hence $\overline{V}(3,f_{(1,2)},\pi_n)\overset{\mathbb{P}}{\longrightarrow}0$. This holds for all $(p_1,p_2)$ where $p_1+p_2$ is odd.

Further we define
\begin{align*}
G_{k,m,p}^n(t)= n^{p/2-1}\sum_{i,j:t_{i,n}^{(1)} \vee t_{j,n}^{(2)}\leq t}|\mathcal{I}_{i,n}^{(1)} |^{k/2}|\mathcal{I}_{j,n}^{(2)} |^{m/2} 
|\mathcal{I}_{i,n}^{(1)}\cap\mathcal{I}_{j,n}^{(2)}|^{(p-k-m)/2}
\mathds{1}_{\{\mathcal{I}_{i,n}^{(1)}\cap \mathcal{I}_{j,n}^{(2)} \neq \emptyset\}}
\end{align*}
and define $G_{k,m,p}(t)$ as the limit of $G_{k,m,p}^n(t)$ in probability as $n \rightarrow \infty$ if it exists. For $p_1=p_2=2$ we have to consider the set $L(2,2)=\{0,2\}\times \{(0,0),(0,2),(2,0)\}$ and then obtain using the above notation 
\begin{align}\label{theo_norm_2d_integer_example_22}
\overline{V}(4,f_{(2,2)},\pi_n)&\overset{\mathbb{P}}{\longrightarrow}\int_0^T (\sigma^{(1)}_s \sigma^{(2)}_s)^2(3\rho_s^2d{H_{0,0,4}}(s)+d{H_{0,2,4}}(s)+(1-\rho_s^2)d{H_{0,0,4}}(s) \nonumber
\\&~~~~~~~~~~~~~~~~~~~~~~~~~~+\rho_s^2d{H_{2,0,4}}(s)+d{H_{2,2,4}}(s)+(1-\rho_s^2)d{H_{2,0,4}}(s)) \nonumber
\\&=\int_0^T (\sigma^{(1)}_s \sigma^{(2)}_s)^2(2\rho_s^2d{H_{0,0,4}}(s)+d{G_{2,2,4}}(s))
\end{align}
where we used $G_{2,2,p}(s)=H_{0,0,p}(s)+H_{0,2,p}(s)+H_{2,0,p}(s)+H_{2,2,p}(s)$, $p \geq 4$, which follows from the identity
\begin{align*}
|\mathcal{I}_{i,n}^{(1)} ||\mathcal{I}_{j,n}^{(2)} |
=(|\mathcal{I}_{i,n}^{(1)}\cap\mathcal{I}_{j,n}^{(2)}|+|\mathcal{I}_{i,n}^{(1)}\setminus \mathcal{I}_{j,n}^{(2)}|)(|\mathcal{I}_{i,n}^{(1)}\cap\mathcal{I}_{j,n}^{(2)}|+|\mathcal{I}_{j,n}^{(2)}\setminus \mathcal{I}_{i,n}^{(1)}|).
\end{align*}
The convergence \eqref{theo_norm_2d_integer_example_22} has already been shown in Proposition A.2 of \cite{MarVet18a}. 

Without presenting detailed computations we state two more results to demonstrate that the limit in Theorem \ref{theo_norm_2d_integer} after simplification sometimes has a much simpler representation compared to the general form in \eqref{theo_norm_2d_integer_cons}. For $p_1=p_2=3$ we have $L(3,3)=L(2,2)$ and we get after simplification
\begin{align*}
\overline{V}(6,f_{(3,3)},\pi_n)\overset{\mathbb{P}}{\longrightarrow}\int_0^T (\sigma^{(1)}_s \sigma^{(2)}_s)^3(6\rho_s^3d{H_{0,0,6}}(s)+9 \rho_sd{G_{2,2,6}}(s))
\end{align*}
and for $p_1=p_2=4$ we obtain
\begin{multline*}
\overline{V}(8,f_{(4,4)},\pi_n)
\overset{\mathbb{P}}{\longrightarrow}\int_0^T (\sigma^{(1)}_s \sigma^{(2)}_s)^4(24\rho_s^4d{H_{0,0,8}}(s)+72\rho_s^2d{G_{2,2,8}}(s)+9 d{G_{4,4,8}}
(s)).
\end{multline*} \qed
\end{example}

Two not very difficult generalizations can be made for the statements in Theorems \ref{theo_norm_2d_sync}, \ref{theo_norm_2d_uncor} and \ref{theo_norm_2d_integer}. The previous results were only stated in a more specific form to keep the notation and the proofs clearer and to direct the reader's focus to the key aspects. First throughout this section the rate $n$ was chosen rather arbitrarily as the appropriate scaling factor by which the average interval lengths decrease and such that we obtain convergence for the functions $G^{(l),n}_p(t)$, $G^{n}_{p_1,p_2}(t)$ and $H^{n}_{k,m,p}(t)$.

\begin{remark}\label{remark_general_rate}
Let $r: \mathbb{N} \rightarrow [0, \infty)$ be a function with $r(n) \rightarrow \infty$ for $n \rightarrow \infty$. Then we obtain the same result as in Corollary \ref{theo_norm_1d} if we set
\begin{align*}
&\overline{V}^{(l)}(p,g,\pi_n)_T=(r(n))^{p/2-1}\sum_{i:t_{i,n}^{(l)}\leq T} g(\Delta_{i,n}^{(l)}X^{(l)}),
\\&G^{(l),n}_p(t)=(r(n))^{p/2-1}\sum_{i:t_{i,n}^{(l)}\leq t} |\mathcal{I}_{i,n}^{(l)}X^{(l)}|^{p/2}.
\end{align*}
Similarly the results from Theorems \ref{theo_norm_2d_sync}, \ref{theo_norm_2d_uncor} and \ref{theo_norm_2d_integer} hold as well if we replace $n$ by $r(n)$ in the definition of the functional $\overline{V}(p,f,\pi_n)_T$ and the functions $G^{n}_{p_1,p_2}(t)$, $H^{n}_{k,m,p}(t)$. The proofs for these claims are identical to the proofs in the more specific case $r(n)=n$. Hence we only need that the observation scheme scales with a deterministic rate $r(n)$ to obtain the results in this section. \qed
\end{remark}

Further, only increments of the continuous martingale part of $X$ contribute to the limits in Theorems \ref{theo_norm_2d_sync}, \ref{theo_norm_2d_uncor} and \ref{theo_norm_2d_integer} and the increments of the continuous part of $X$ tend to get very small as the observation intervals become shorter. Hence only function evaluations $f(x)$ at very small $x$ and especially the behaviour of $f(x)$ for $x \rightarrow 0$ has an influence on the asymptotics. These arguments motivate that the convergences in the above theorems do not only hold for positively homogeneous functions but as the following corollary shows also for functions $f$ which are very close to being positively homogeneous for $x \rightarrow 0$; compare Corollary 3.4.3 in \cite{JacPro12}.

\begin{corollary}\label{cor_norm_almost_poshom}
Suppose that the convergence in one of the Theorems \ref{theo_norm_2d_sync}, \ref{theo_norm_2d_uncor} and \ref{theo_norm_2d_integer} holds for the function $f:\mathbb{R}^2 \rightarrow \mathbb{R}$. Then the corresponding convergence also holds for all functions $\tilde{f}:\R^2 \rightarrow \R$ which can be written as 
\begin{align*}
\tilde{f}(x)=L(x)f(x)
\end{align*}
for a locally bounded function $L(x)$ that fulfills $\lim_{x \rightarrow 0}L(x)=1$.
\end{corollary}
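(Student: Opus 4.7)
The plan is to treat the statement as a perturbation result: since $\tilde f=f+(L-1)f$, what we really need to show is $\overline V(p,(L-1)f,\pi_n)_T \overset{\mathbb{P}}{\longrightarrow} 0$, upon which the limit of $\overline V(p,\tilde f,\pi_n)_T$ coincides with the one already furnished for $f$ by the invoked theorem. The natural route---modelled on Corollary 3.4.3 of \cite{JacPro12} in the synchronous setting---is a truncation argument separating small from large increments.

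Define $\psi(r):=\sup_{\|y\|\leq r}|L(y)-1|$. By $\lim_{x\to 0}L(x)=1$ and local boundedness of $L$, $\psi$ is non-decreasing, $\psi(r)\to 0$ as $r\to 0$, and $\psi$ is finite on every compact set. A standard localisation lets us assume that $X$ is uniformly bounded on $[0,T]$, so $\psi\leq K$ on the a.s.\ range of the increments. For fixed $\delta>0$ I would split each summand of $\overline V(p,(L-1)f,\pi_n)_T$ according to whether $\|(\Delta_{i,n}^{(1)} X^{(1)},\Delta_{j,n}^{(2)} X^{(2)})\|\leq \delta$ or $>\delta$. The small-increment part is dominated in absolute value by $\psi(\delta)\cdot \overline V(p,|f|,\pi_n)_T$. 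In the settings of Theorems \ref{theo_norm_2d_sync} and \ref{theo_norm_2d_uncor} this is harmless: $|f|$ satisfies the same hypotheses as $f$, so the theorem applies with $f$ replaced by $|f|$ and $\overline V(p,|f|,\pi_n)_T$ converges, in particular it is $O_\mathbb{P}(1)$. In the setting of Theorem \ref{theo_norm_2d_integer} we have $|f_{(p_1,p_2)}|=f_{(p_1,p_2)}$ when $p_1,p_2$ are both even; when both are odd (so that $p_1+p_2$ is still even) the elementary inequality $|x_1 x_2|\leq \tfrac{1}{2}(x_1^2+x_2^2)$ yields $|f_{(p_1,p_2)}|\leq \tfrac{1}{2}\bigl(f_{(p_1+1,p_2-1)}+f_{(p_1-1,p_2+1)}\bigr)$, reducing the $O_\mathbb{P}(1)$ bound to integer-power monomials with even exponents of the \emph{same} total degree $p_1+p_2$, which are governed by Theorem \ref{theo_norm_2d_integer}; the mixed-parity cases, where $p_1+p_2$ is odd and the reference limit is zero, are treated analogously.

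The large-increment part has to vanish as $n\to\infty$ for each fixed $\delta>0$. In case (b) of each of the three theorems $X$ is continuous, hence uniformly continuous on $[0,T]$ after localisation, so $\max_{i,j:\mathcal I_{i,n}^{(1)}\cap \mathcal I_{j,n}^{(2)}\neq\emptyset}\|(\Delta_{i,n}^{(1)} X^{(1)},\Delta_{j,n}^{(2)} X^{(2)})\|\overset{\mathbb{P}}{\longrightarrow} 0$ and the large-increment part is eventually empty. In case (a) $X$ may jump, but the restriction on the total degree ($p<2$ in Theorems \ref{theo_norm_2d_sync}, \ref{theo_norm_2d_uncor}, or $p_1+p_2\in\{0,1\}$ in Theorem \ref{theo_norm_2d_integer}) together with the fact that $X$ almost surely has only finitely many jumps of absolute value exceeding $\delta/\sqrt 2$ on $[0,T]$ renders the corresponding contribution of order $O_\mathbb{P}(n^{p/2-1})$, hence asymptotically negligible. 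Taking $\limsup_n$ first and then sending $\delta\to 0$ closes the argument. The delicate point---and the main obstacle---is verifying the $O_\mathbb{P}(1)$ bound on $\overline V(p,|f|,\pi_n)_T$ in the framework of Theorem \ref{theo_norm_2d_integer}, because naive degree-shifting manipulations such as $|x|^{p_1}=x^{p_1-1}|x|$ distort the total homogeneity degree and break the matching with the scaling factor $n^{(p_1+p_2)/2-1}$; one must therefore work with bounds like $|x_1 x_2|\leq \tfrac{1}{2}(x_1^2+x_2^2)$ that preserve the total degree.
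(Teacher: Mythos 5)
Your skeleton coincides with the paper's: write $\tilde f-f=(L-1)f$, split each summand according to whether $\|(\Delta^{(1)}_{i,n}X^{(1)},\Delta^{(2)}_{j,n}X^{(2)})\|\le\delta$ or $>\delta$, bound the small-increment part by $\sup_{\|y\|\le\delta}|L(y)-1|\cdot\overline V(p,|f|,\pi_n)_T$, and kill the large-increment part separately. The genuine gap is in how you certify $\overline V(p,|f|,\pi_n)_T=O_{\mathbb P}(1)$ in the setting of Theorem \ref{theo_norm_2d_integer}. The paper never re-invokes the limit theorems for $|f|$: it uses $|f(x_1,x_2)|\le K|x_1|^{p_1}|x_2|^{p_2}$ from \eqref{ineq_pos_hom_abs_2d} and the Cauchy--Schwarz bound $\mathbb E\big[|\Delta^{(1)}_{i,n}C^{(1)}|^{p_1}|\Delta^{(2)}_{j,n}C^{(2)}|^{p_2}\,\big|\,\mathcal S\big]\le K|\mathcal I^{(1)}_{i,n}|^{p_1/2}|\mathcal I^{(2)}_{j,n}|^{p_2/2}$ to get $\mathbb E[\overline C^*(p,|f|,\pi_n)_T\,|\,\mathcal S]\le K\widetilde G^{(d_1,d_2),n}_{p_1,p_2}(T)=O_{\mathbb P}(1)$, and then Proposition \ref{lemma_norm_sync} to pass from $C$ to $X$. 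Your route instead bounds $|f_{(p_1,p_2)}|$ (for $p_1,p_2$ odd) by the even monomials $f_{(p_1+1,p_2-1)},f_{(p_1-1,p_2+1)}$ and applies Theorem \ref{theo_norm_2d_integer} to those. But the theorem applied to the shifted monomials requires the convergences \eqref{theo_norm_2d_integer_cond} for the pairs $(k,m)$ coming from $L(p_1+1,p_2-1)$ and $L(p_1-1,p_2+1)$, which are not part of the corollary's hypotheses, and redistributing powers between the two arguments really does change which scheme functionals control the sum. Concretely, take $p_1=p_2=1$ (only $H^n_{0,0,2}(t)\to t$ is involved), $X$ a bivariate Brownian motion, $t^{(1)}_{i,n}=i/n$, and let $X^{(2)}$ be sampled at spacing $n^{-(1+\gamma)}$ on $[0,n^{-\gamma/2}]$ and at spacing $1/n$ elsewhere: then $G^n_{1,1}(T)=O(1)$, so the corollary's setting is in force and the paper's bound makes $\overline V(2,|f_{(1,1)}|,\pi_n)_T$ tight, while $\overline V(2,f_{(2,0)},\pi_n)_T=\sum_{i,j}(\Delta^{(1)}_{i,n}X^{(1)})^2\mathds 1_{\{\mathcal I^{(1)}_{i,n}\cap\mathcal I^{(2)}_{j,n}\neq\emptyset\}}$ diverges in probability (each squared increment on $[0,n^{-\gamma/2}]$ is counted roughly $n^{\gamma}$ times, and $H^n_{2,0,2}$ diverges as well). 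So the inequality $|f_{(1,1)}|\le\tfrac12(f_{(2,0)}+f_{(0,2)})$ cannot deliver the needed $O_{\mathbb P}(1)$ bound under the stated assumptions.

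Two further points. For mixed parity the total degree $p_1+p_2$ is odd, and even exponents always sum to an even number, so there is no degree-preserving bound by even-exponent monomials at all; the announced ``analogous'' treatment cannot be carried out (this already bites for $(p_1,p_2)=(1,0)$ in case (a)), and one is forced back to a direct moment estimate as in the paper. Also, your case-(a) treatment of the large-increment part is too quick in the asynchronous setting: a single jump of $X^{(1)}$ exceeding the threshold enters as many summands as there are overlapping intervals $\mathcal I^{(2)}_{j,n}$, a number that need not stay bounded, so the contribution is not simply $O_{\mathbb P}(n^{p/2-1})$; the paper controls it by isolating the big jumps on a set $\Omega(n,q,N,\delta)$ and reusing the H\"older estimate \eqref{hoelder_trick} from the proof of Proposition \ref{lemma_norm_sync}, exploiting $p_1+p_2<2$. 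For Theorems \ref{theo_norm_2d_sync} and \ref{theo_norm_2d_uncor} your idea of re-applying the theorem to $|f|$ is legitimate, but the uniform and assumption-free way to close the argument in all three settings is the paper's conditional-moment bound.
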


\begin{example}\label{example_norm_pois}
In the setting of Poisson sampling the assumptions of Theorems \ref{theo_norm_2d_sync}, \ref{theo_norm_2d_uncor} and \ref{theo_norm_2d_integer} are fulfilled. Indeed the functions $G_p^{(l),n},G_{p_1,p_2}^{n}, H_{k,m,p}^n$ converge using similar arguments as in the proof of Lemma 6.5 in \cite{MarVet18b} to deterministic linear and hence continuous functions. Although the functions $G_p^{(l)},G_{p_1,p_2}, H_{k,m,p}$ are in general unknown they can easily be estimated by $G_p^{(l),n},G_{p_1,p_2}^{n}, H_{k,m,p}^n$. \qed
\end{example}

\section{Outlook}\label{sec:outlook}
\def\theequation{5.\arabic{equation}}
\setcounter{equation}{0}

We have seen that it is possible to generalize classical results in the field of high-frequency statistics based on synchronous observation data to the setting of asynchronous observations, and the related papers \cite{BibVet15} and \cite{MarVet18a, MarVet18b} demonstrate that statistical inference can be based on functionals using asynchronously observed data as well. Although the results and statistical procedures become more complicated in the setting of irregular and asynchronous observations it is beneficial to develop such methods as they are more efficient in practical applications because no additional synchronization steps are needed. In this context we believe that it is worthwhile to further investigate statistical methods which directly work with irregular and asynchronous data, and many more generalizations of well-known results from the setting of equidistant and synchronous data are needed, as the functionals $V(f,\pi_n)_T$ and $\overline{V}(p,f,\pi_n)_T$ are among the simplest statistics discussed in the context of high-frequency observations.

Various generalizations and extensions of these functionals have been studied theoretically (compare \cite{JacPro12}) and used for numerous applications (compare \cite{AitJac14}). Examples include sums of functionals of truncated increments where $f(\Delta_{i,n} X)$ is only included in the sum if $\|\Delta_{i,n} X\|$ lies below or exceeds a certain threshold which may or may not depend on the length of the corresponding observation interval. Further, sums of functionals evaluated at multiple consecutive observation intervals $f(\Delta_{i-k,n} X,\Delta_{i-k+1,n} X,\ldots,\Delta_{i,n} X)$ have been studied and can e.g.\ be used for the estimation of integrated volatility; compare \cite{Barnetal06}. It would be important to investigate how analogous versions based on asynchronous observations should be defined, and whether these versions behave differently in the asymptotics when compared to the corresponding functionals based on synchronous and often equidistant observations.

Further, the laws of large numbers found here for the functionals $V(f,\pi_n)_T$ and $\overline{V}(p,f,\pi_n)_T$ should be accompanied by corresponding central limit theorems. Such central limit theorems are stated for general functions $f$ for the functionals $V(f,\pi_n)_T$ and $\overline{V}(p,f,\pi_n)_T$ in the setting of equidistant and synchronous observation times in Chapter 5 of \cite{JacPro12} and for the functional $\overline{V}(p,f,\pi_n)_T$ also in the setting of synchronous but irregular observation times in Chapter 14 of \cite{JacPro12}. For the function $f_{(1,1)}(x_1,x_2)=x_1 x_2$ a central limit theorem has been found for $V(f_{(1,1)},\pi_n)_T=\overline{V}(2,f_{(1,1)},\pi_n)$ in \cite{BibVet15}, and in \cite{MarVet18a} a central limit theorem for $f_{(2,2)}(x_1,x_2)=(x_1)^2(x_2)^2$ has been developed for $V(f_{(2,2)},\pi_n)_T$ under the assumption that no common jumps exist (then it holds $V(f^*,\pi_n)_T \rightarrow 0$). In general, central limit theorems in the setting of irregular and asynchronous observation times are much more complex compared to those in the equidistant and synchronous setting. Their asymptotic variances do not only have a more complicated structure depending on the observation scheme but might also contain additional terms which represent how fast the functions $G^{(l),n}_p(t)$, $G^{n}_{p_1,p_2}(t)$ and $H^{n}_{k,m,p}(t)$ converge; compare Theorem 14.3.2 in \cite{JacPro12}.

\section{Proofs}\label{sec:proofs}
\def\theequation{6.\arabic{equation}}
\setcounter{equation}{0}

\subsection{Preliminaries}

In the following we will assume without loss of generality that the processes $b_t,\sigma_t,\Gamma_t$ are bounded. They are locally bounded by Condition \ref{cond_consistency}. A localization
procedure then shows that the results for bounded processes can be carried over to the case of locally bounded processes; compare Section 4.4.1 in \cite{JacPro12}.

We introduce the decomposition $X_t=X_0+B(q)_t+C_t+M(q)_t+N(q)_t$ of the It\^o semimartingale \eqref{ItoSemimart} with
\begin{align}\label{decomposition_ito}
\begin{split}
B(q)_t&=\int_0^t \big( b_s -\int_{\mathbb{R}^2}(\delta(s,z)\mathds{1}_{\{\|\delta(s,z)\|\leq 1\}}-\delta(s,z)\mathds{1}_{\{\gamma(z)\leq 1/q\}})\lambda(dz)\big)ds,
\\C_t &= \int_0^t \sigma_s dW_s,
\\M(q)_t&=\int_0^t \int_{\mathbb{R}^2} \delta(s,z) \mathds{1}_{\{\gamma(z)\leq 1/q\}}(\mu-\nu)(ds,dz),
\\N(q)_t&=\int_0^t \int_{\mathbb{R}^2} \delta(s,z) \mathds{1}_{\{\gamma(z)>1/q\}}\mu(ds,dz).
\end{split}
\end{align}
Here $q$ is a parameter which controls whether jumps are classified as small jumps or big jumps. 

Throughout the upcoming proofs we will make repeated use of the estimates in the following lemma.

\begin{lemma}\label{elem_ineq}
If Condition \ref{cond_consistency} is fulfilled and the processes $b_t,\sigma_t,\Gamma_t$ are bounded there exist constants $K_p, K_{p'},K_{p,q},\widetilde{K}_{p,q},e_q \geq 0$ such that
\begin{align}
&\|B(q)_{s+t}-B(q)_s\|^p \leq K_{p,q} t^p, \label{elem_ineq_B}
\\ &\mathbb{E} \big[ \|C_{s+t}-C_{s}\|^p|\mathcal{F}_s\big] \leq K_p t^{\frac{p}{2}}, \label{elem_ineq_C}
\\ &\mathbb{E} \big[ \|M(q)_{s+t}-M(q)_{s}\|^{p}|\mathcal{F}_s\big] \leq K_{p} t^{\frac{p}{2} \wedge 1} (e_q)^{\frac{p}{2} \wedge 1}, \label{elem_ineq_M}
\\ &\mathbb{E} \big[ \|N(q)_{s+t}-N(q)_{s}\|^{p'}|\mathcal{F}_s\big] \leq \widetilde{K}_{p',q} t+K_{p',q}t^{p'},  \label{elem_ineq_N}
\\ &\mathbb{E} \big[ \|X_{s+t}-X_{s}\|^{p}|\mathcal{F}_s\big] \leq K_{p} t^{\frac{p}{2} \wedge 1},  \label{elem_ineq_X}
\end{align}
for all $s , t \geq 0$ with $s+t \leq T$ and all $q >0$, $p \geq 0$, $p' \geq 1$. Here, $e_q$ can be chosen such that $e_q \rightarrow 0$ for $q \rightarrow \infty$. For $p' \geq 2$ the constant $\widetilde{K}_{p',q}$ may be chosen independently of $q$.
\end{lemma}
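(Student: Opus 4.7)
The plan is to dispatch each of the five bounds by applying the right moment inequality to the corresponding piece of the decomposition \eqref{decomposition_ito}. Under the boundedness reduction already in force, we treat $b$, $\sigma$ and $\Gamma$ as uniformly bounded by some constant $K$, and write $\bar\gamma=\sup_z\gamma(z)<\infty$. Estimate \eqref{elem_ineq_B} is essentially trivial: the integrand of $B(q)$ equals $b_s$ plus a compensator correction whose norm is bounded by $\int\|\delta\|(\mathds{1}_{\{\|\delta\|\leq 1,\gamma>1/q\}}+\mathds{1}_{\{\|\delta\|>1,\gamma\leq 1/q\}})\lambda(dz)$, a finite constant $K_q$ by virtue of $\|\delta\|\leq K\gamma$ together with $\int(\gamma^2\wedge 1)\lambda(dz)<\infty$. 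Hence $B(q)$ is globally Lipschitz in time with constant $K+K_q$, which gives \eqref{elem_ineq_B}. Estimate \eqref{elem_ineq_C} follows from the Burkholder-Davis-Gundy inequality applied to the continuous martingale $C$: since $\langle C\rangle_{s+t}-\langle C\rangle_s=\int_s^{s+t}\sigma_u\sigma_u^* du$ has operator norm at most $K^2 t$, BDG gives the bound directly for $p\geq 1$, while the $p<1$ case follows by conditional Jensen from the $p=1$ estimate.

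For the small-jump martingale estimate \eqref{elem_ineq_M}, define $e_q=K^2\int\gamma(z)^2\mathds{1}_{\{\gamma(z)\leq 1/q\}}\lambda(dz)$, which tends to $0$ as $q\to\infty$ by dominated convergence. From $\|\delta\|^2\leq K^2\gamma^2$ one obtains $\mathbb{E}[\|M(q)_{s+t}-M(q)_s\|^2\mid\mathcal{F}_s]\leq te_q$, and for $p\leq 2$ conditional Jensen applied to the concave function $x\mapsto x^{p/2}$ delivers the desired $(te_q)^{p/2}$ bound. For $p\geq 2$ I would invoke Kunita's inequality, which controls the $p$-th moment by $C_p\mathbb{E}[\langle M(q)\rangle_{s+t}^{p/2}-\langle M(q)\rangle_s^{p/2}]$ plus a compensated-sum of $p$-th powers of jumps; the first piece is $(te_q)^{p/2}$ and the second is handled using the uniform jump bound $\|\Delta M(q)\|\leq K/q$, via $\int\gamma^p\mathds{1}_{\{\gamma\leq 1/q\}}\lambda(dz)\leq q^{-(p-2)}\int\gamma^2\mathds{1}_{\{\gamma\leq 1/q\}}\lambda(dz)\leq Ke_q$. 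Since $te_q\leq Te_1$ is uniformly bounded for $q\geq 1$, the factor $(te_q)^{p/2-1}$ can be absorbed into a single constant $K_p$, producing the clean $te_q$ bound. For the large-jump estimate \eqref{elem_ineq_N}, the process $N(q)$ has finite activity: the jump count on $(s,s+t]$ is Poisson with rate $\lambda_q t$, where $\lambda_q=\lambda(\gamma>1/q)\leq q^2\int(\gamma^2\wedge 1)\lambda(dz)<\infty$, and each jump is of norm at most $K\bar\gamma$. The standard Poisson moment bound $\mathbb{E}[K(s,t)^{p'}\mid\mathcal{F}_s]\leq C_{p'}(\lambda_q t+(\lambda_q t)^{p'})$ yields \eqref{elem_ineq_N} immediately. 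For $p'\geq 2$, the $q$-independence of $\widetilde{K}_{p',q}$ is recovered by writing $N(q)$ as its compensated martingale plus drift and applying BDG: the quadratic variation of the martingale part is bounded by $t\int\|\delta\|^2\mathds{1}_{\{\gamma>1/q\}}\lambda(dz)\leq tK^2(1+\bar\gamma^2)\int(\gamma^2\wedge 1)\lambda(dz)$, a constant independent of $q$ after splitting the domain into $\{1/q<\gamma\leq 1\}$ and $\{\gamma>1\}$.

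Estimate \eqref{elem_ineq_X} then follows from the four preceding bounds via $\|a+b+c+d\|^p\leq C_p(\|a\|^p+\|b\|^p+\|c\|^p+\|d\|^p)$ after fixing any convenient value of $q$, since the left-hand side is $q$-free. For $p\leq 2$ all four contributions are $O(t^{p/2})$ on $t\leq T$ by the preceding estimates, and for $p\geq 2$ all four are $O(t)$, with the leading linear-in-$t$ coefficient in the $N(q)$ term being $q$-free by the refinement just established. The main obstacle is the $p>2$ case of \eqref{elem_ineq_M}: a naive application of BDG produces a mixed bound of the form $(te_q)^{p/2}+te_q$, and converting this into the clean form linear in $e_q$ requires simultaneously exploiting the uniform jump bound $\|\Delta M(q)\|\leq K/q$ and the fact that $te_q$ remains uniformly bounded as $q$ varies, so that the higher-order power $(te_q)^{p/2-1}$ can be absorbed into the prefactor without destroying the essential property $e_q\to 0$ that will later drive the jump-truncation arguments in which this lemma is used.
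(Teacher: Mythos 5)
Your proof is correct and amounts to the same argument as the paper's: the paper disposes of the lemma in one line by citing the standard moment estimates (2.1.33), (2.1.34), (2.1.37) and (2.1.41) of Jacod and Protter together with Jensen's inequality, and your BDG/Kunita bounds for $C$ and $M(q)$, the Lipschitz bound for $B(q)$, and the Poisson-counting plus compensated-martingale argument for $N(q)$ are precisely the content of those cited inequalities. The only (harmless) loose ends are the step $q^{-(p-2)}\leq 1$ in the $p>2$ case of \eqref{elem_ineq_M}, which needs $q\geq 1$ (the regime $q<1$ being irrelevant since only $q\to\infty$ matters), and the case $p<1$ of \eqref{elem_ineq_X}, where \eqref{elem_ineq_N} is not directly applicable and one should pass through the $p=1$ bound via conditional Jensen as you already do elsewhere.
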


\begin{proof}
The inequalities \eqref{elem_ineq_B}--\eqref{elem_ineq_X} follow from Condition \ref{cond_consistency}, inequalities (2.1.33), (2.1.34), (2.1.37), (2.1.41) in \cite{JacPro12} and Jensen's inequality.
\end{proof}

\subsection{Proofs for Section \ref{sec:nonnormFunc}}\label{subsection:lln_nonnorm_proof}

As a preparation for the proof of Theorem \ref{general_22} we prove \eqref{cons_22} for functions $f$ that vanish in a neighbourhood of the two axes $\{(x,y)\in \mathbb{R}^2|xy=0\}$.

\begin{lemma}\label{lemma_vanish} Under Condition \ref{cond_consistency} we have
\begin{align*}
V(f,\pi_n)_T \overset{\mathbb{P}}{\longrightarrow} B^*(f)_T
\end{align*}
for all continuous functions $f:\mathbb{R}^2 \rightarrow \mathbb{R}$ which vanish on the set \mbox{$\{(x,y) \in \mathbb{R}^2:|xy|< \rho\}$} for some $\rho > 0$.
\end{lemma}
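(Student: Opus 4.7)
The plan is to exploit the jump/non-jump decomposition \eqref{decomposition_ito}. After the localization reducing to bounded $b_t,\sigma_t,\Gamma_t$, I fix $\eta>0$ and take $K=K_\eta$ with $\mathbb{P}(\sup_{t\le T}\|X_t\|>K)<\eta$; all subsequent estimates are carried out on the event $\{\sup_{t\le T}\|X_t\|\le K\}$. I choose $q$ so large that $\|\Gamma\|_\infty/q<\sqrt{\rho}$, so that every jump of $M(q)$ satisfies $|\Delta X^{(1)}_s\Delta X^{(2)}_s|\le(\|\Gamma\|_\infty/q)^2<\rho$ and hence does not contribute to $B^*(f)_T$. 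Writing $S_1,\ldots,S_{K_q}$ for the (finitely many) jump times of $N(q)$ on $[0,T]$, this yields
\[
B^*(f)_T=\sum_{k=1}^{K_q}f(\Delta X^{(1)}_{S_k},\Delta X^{(2)}_{S_k})\mathds{1}_{\{\Delta X^{(1)}_{S_k}\Delta X^{(2)}_{S_k}\neq 0\}}.
\]

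Because $|\pi_n|_T\to 0$ in probability, for $n$ large enough each $S_k$ lies in a unique observation interval $\mathcal{I}^{(1)}_{i_k(n),n}$ and a unique $\mathcal{I}^{(2)}_{j_k(n),n}$, and the $K_q$ pairs $(i_k(n),j_k(n))$ are distinct. I split $V(f,\pi_n)_T$ into the sum over these $K_q$ \emph{jump pairs} and a sum over the \emph{remaining pairs} $(i,j)$ whose intervals overlap but which are not of the form $(i_k(n),j_k(n))$. The key quantitative input is the maximal estimate
\[
\max_{i:t^{(l)}_{i,n}\le T}\big|\Delta^{(l)}_{i,n}(X-N(q))^{(l)}\big|\le o_{\mathbb{P}}(1)+O_{\mathbb{P}}(\sqrt{e_q}),\qquad l=1,2,
\]
where the $o_{\mathbb{P}}(1)$-term is as $n\to\infty$ for fixed $q$ and comes from the uniform continuity on $[0,T]$ of the continuous paths of $B(q)+C$, while the $O_{\mathbb{P}}(\sqrt{e_q})$-term is uniform in $n$ and comes from Lemma \ref{elem_ineq} via $\mathbb{E}[\max_i|\Delta^{(l)}_{i,n}M(q)^{(l)}|^2]\le\sum_i\mathbb{E}[|\Delta^{(l)}_{i,n}M(q)^{(l)}|^2]\le K_2Te_q$. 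Crucially this uses a summed $L^2$-bound rather than a union bound, which is what makes the argument uniform over the $O(n)$ remaining pairs.

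For any remaining pair, at least one of the two increments is an $X-N(q)$-increment: if $\mathcal{I}^{(1)}_{i,n}$ contains some $S_k$, then for $n$ so large that the mesh is smaller than $\min_{k\neq k'}|S_k-S_{k'}|$ the interval $\mathcal{I}^{(2)}_{j,n}$ contains no $S_{k'}$ (otherwise $(i,j)$ would be a jump pair), and symmetrically; if neither interval contains any $S_k$ then both increments are $X-N(q)$-increments. Combined with the bound $|\Delta^{(l)}_{\cdot,n}X^{(l)}|\le 2K$ on the other increment (valid on the event above), this gives
\[
\max_{(i,j)\text{ remaining}}\big|\Delta^{(1)}_{i,n}X^{(1)}\,\Delta^{(2)}_{j,n}X^{(2)}\big|\le 2K\bigl(o_{\mathbb{P}}(1)+O_{\mathbb{P}}(\sqrt{e_q})\bigr),
\]
which, by first choosing $q$ then $n$ large, can be made smaller than $\rho$ with probability greater than $1-\eta$. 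The vanishing property of $f$ on $\{|xy|<\rho\}$ then forces every remaining summand to be $0$.

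For the $K_q$ jump pairs the same estimate gives $\Delta^{(1)}_{i_k(n),n}X^{(1)}=\Delta X^{(1)}_{S_k}+o_{\mathbb{P}}(1)+O_{\mathbb{P}}(\sqrt{e_q})$ and likewise for $X^{(2)}$. If $\Delta X^{(1)}_{S_k}\Delta X^{(2)}_{S_k}\neq 0$, continuity of $f$ on a compact neighbourhood of $(\Delta X^{(1)}_{S_k},\Delta X^{(2)}_{S_k})$ yields convergence of the summand to $f(\Delta X^{(1)}_{S_k},\Delta X^{(2)}_{S_k})$ after sending $n\to\infty$ and then $q\to\infty$; if one of the jump sizes vanishes, the product of the two increments is again bounded by $2K(o_{\mathbb{P}}(1)+O_{\mathbb{P}}(\sqrt{e_q}))<\rho$ for $q,n$ large, so $f$ vanishes and the summand is $0$. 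Summing over $k$ gives $B^*(f)_T$ in the limit. The main obstacle I anticipate is precisely the uniformity over the $O(n)$ remaining pairs; a pair-by-pair bound would fail, and one must instead combine the summed martingale $L^2$-estimate on $M(q)$ with the vanishing of $f$ near the axes.
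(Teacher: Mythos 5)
Your proof is correct, but it follows a genuinely different route from the paper's. The paper removes from $X$ the finitely many jumps with $|\Delta X^{(1)}_s\Delta X^{(2)}_s|\geq\rho/2$ and then argues \emph{purely pathwise} (in the spirit of Lemma 3.3.7 of \cite{JacPro12}): for the remaining c\`adl\`ag process the supremum of $|(\widetilde X^{(1)}_{t_1}-\widetilde X^{(1)}_{s_1})(\widetilde X^{(2)}_{t_2}-\widetilde X^{(2)}_{s_2})|$ over overlapping intervals of length at most $\theta$ has $\limsup_{\theta\to0}$ strictly below $\rho$, for every $\omega$; no moment estimates, no truncation level $q$, and no second limit are needed. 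You instead invoke the decomposition \eqref{decomposition_ito}, isolate the $N(q)$-jumps, control $B(q)+C$ by pathwise uniform continuity and $M(q)$ by the summed $L^2$-bound $\sum_i\mathbb{E}[|\Delta^{(l)}_{i,n}M(q)^{(l)}|^2]\leq KTe_q$, and then take $\lim_{q\to\infty}\limsup_{n\to\infty}$. This works: your identification of the ``remaining'' pairs as those in which at least one increment is an $(X-N(q))$-increment is exactly right, and the summed $L^2$-bound (rather than a union bound) is indeed what gives uniformity over the $O(n)$ pairs. What your route buys is a quantitative estimate of the type used elsewhere in the paper (e.g.\ in the proof of Theorem \ref{general_1plusBeta}); what it costs is the extra parameter $q$ and the double limit, plus two small points you should make explicit: (i) Lemma \ref{elem_ineq} is stated for deterministic times, so over the stopping-time intervals $\mathcal{I}^{(l)}_{i,n}$ the bound $\sum_i\mathbb{E}[|\Delta^{(l)}_{i,n}M(q)^{(l)}|^2]\leq\mathbb{E}[\langle M(q)^{(l)}\rangle_T]\leq KTe_q$ should be obtained by telescoping the predictable quadratic variation; (ii) the boundary effect at $T$ (a jump $S_k$ within $|\pi_n|_T$ of $T$ whose covering intervals end after $T$, so that the pair is excluded from $V(f,\pi_n)_T$), which the paper absorbs into the event $\Omega(n)$ using $\mathbb{P}(\Delta X_T=0)=1$. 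Both are routine and do not affect the validity of your argument.
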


\begin{proof}
The following arguments are similar to the proof of Lemma 3.3.7 in \cite{JacPro12}: Note that as $X$ is c\`adl\`ag there can only exist countably many jump times $s \geq 0$ with $|\Delta X^{(1)}_{s}\Delta X^{(2)}_{s}|\geq\rho/2$ and in each compact time interval there are only finitely many such jumps. Denote by $(S_p)_{p \in \mathbb{N}}$ an enumeration of those jump times and let 
\begin{align*}
\widetilde{X}_t=X_t-\int_0^t \int_{\mathbb{R}^2} \delta(s,z) \mathds{1}_{\{|\delta^{(1)}(s,z)\delta^{(2)}(s,z)|\geq \rho/2\}}\mu(ds,dz)
\end{align*}
denote the process $X$ without those jumps. This yields $|\Delta \widetilde{X}^{(1)}_s\Delta \widetilde{X}^{(2)}_s|< \rho/2$ for all $s \in [0,T]$. Hence
\begin{gather*}
~~\limsup_{\theta \rightarrow 0} \sup_{0 \leq s_l \leq t_l \leq T-|\pi_n|_T, t_l-s_l \leq \theta, (s_1,t_1]\cap (s_2,t_2] \neq \emptyset} |(\widetilde{X}^{(1)}_{t_1}(\omega)- \widetilde{X}^{(1)}_{s_1}(\omega))( \widetilde{X}^{(2)}_{t_2}(\omega)- \widetilde{X}^{(2)}_{s_2}(\omega))|  < \frac{\rho}{2}~~
\end{gather*}
for all $\omega \in \Omega$. Then there exists $\theta':\Omega \rightarrow (0, \infty)$ such that 
\[
\sup_{0 \leq s_l \leq t_l \leq T, t_l-s_l \leq \theta'(\omega), (s_1,t_1]\cap (s_2,t_2] \neq \emptyset} |(\widetilde{X}^{(1)}_{t_1}(\omega)- \widetilde{X}^{(1)}_{s_1}(\omega))( \widetilde{X}^{(2)}_{t_2}(\omega)- \widetilde{X}^{(2)}_{s_2}(\omega))|    < \rho. \] 
Denote by $\Omega(n)$ the subset of $\Omega$ which is defined as the intersetion of the set ${\{|\pi_n|_T \leq \theta'\}}$ and the set on which any two different jump times $S_p \neq S_{p'}$ with $S_{p},S_{p'} \leq T$ satisfy $|S_{p'}-S_{p}|>2 |\pi_n|_T$ and on which $|T-S_p|>|\pi_n|_T$ for any $S_p \leq T$. Then we have
\begin{align}\label{lemma_vanish_proof1}
V(f,\pi_n)_T \mathds{1}_{\Omega(n)}=\sum_{p:S_p \leq T-|\pi_n|_T} f\big(\Delta_{i_n^{(1)}(S_p),n}^{(1)}X^{(1)},\Delta_{i_n^{(2)}(S_p),n}^{(2)} X^{(2)} \big) \mathds{1}_{\Omega(n)}
\end{align}
where $i_n^{(l)}(s)$ denotes the index of the interval characterized by $s \in \mathcal{I}_{i_n^{(l)}(s),n}^{(l)}$. Further we get from Condition \ref{cond_consistency} 
\begin{align*}
f\big(\Delta_{i_n^{(1)}(S_p),n}^{(1)}X^{(1)},\Delta_{i_n^{(2)}(S_p),n}^{(2)} X^{(2)} \big)\mathds{1}_{\{S_p \leq T\}} \overset{\mathbb{P}}{\longrightarrow} f \big(\Delta X^{(1)}_{S_p},\Delta X^{(2)}_{S_p}\big)\mathds{1}_{\{S_p \leq T\}}
\end{align*}
for any $p \in \mathbb{N}$ because $X$ is c\`adl\`ag and $f$ is continuous. Using this convergence, the fact that there exist almost surely only finitely many $p \in \mathbb{N}$ with $S_p \leq T$ and $\mathbb{P}(\Delta X_T=0)=1$ we obtain 
\begin{multline*}
\sum_{p:S_p \leq T-|\pi_n|_T}f\big(\Delta_{i_n^{(1)}(S_p),n}^{(1)}X^{(1)},\Delta_{i_n^{(2)}(S_p),n}^{(2)} X^{(2)} \big) 
\\ \overset{\mathbb{P}}{\longrightarrow}
\sum_{s \leq T} f\big(\Delta X^{(1)}_s,\Delta X^{(2)}_s \big) \mathds{1}_{\{|\Delta X^{(1)}_s\Delta X^{(2)}_s| \geq \rho/2 \}}=B^*(f)_T, 
\end{multline*}
where the last equality holds because of $f(x,y)=0$ for $|xy|< \rho$. This yields the claim because of \eqref{lemma_vanish_proof1} and $\mathbb{P}(\Omega(n)) \rightarrow 1$ as $n \rightarrow \infty$. 
\end{proof}

\begin{proof}[Proof of Theorem \ref{general_22}]
Define $f_\rho(x,y)=f(x,y)\psi(|xy|/\rho)$ where $\psi: [0, \infty) \rightarrow [0, 1]$ is continuous with $\psi(u)=0$ for $u\leq 1/2$ and $\psi(u)=1$ for $u \geq 1$. By Lemma \ref{lemma_vanish} we have
\begin{align}\label{proof_general22_1}
V(f_\rho,\pi_n)_T \overset{\mathbb{P}}{\longrightarrow} B^*(f_\rho)_T
\end{align}
for all $\rho>0$. Because of $f(x,y)=O(x^2y^2)$ as $|xy|\rightarrow 0$ there exist constants $K_\rho>0$ with $|f_\rho(x,y)|\leq|f(x,y)| \leq K_\rho |x|^2|y|^2$ for all $(x,y)$ with $|xy|<\rho$ and $\rho \mapsto K_\rho$ is non-increasing as $\rho \rightarrow 0$. Hence it holds
\begin{align}\label{proof_general22_2}
|B^*(f)_T-B^*(f_\rho)_T|\leq 2 \sum_{s \leq T} K_\rho \big|\Delta X^{(1)}_s\big|^2\big|\Delta X^{(2)}_s\big|^2 \mathds{1}_{\{|\Delta X^{(1)}_s \Delta X^{(2)}_s|<\rho\}} \rightarrow 0
\end{align}
as $\rho \rightarrow 0$. To conclude \eqref{cons_22} we have to show
\begin{align}\label{proof_general22_3}
\lim_{\rho \rightarrow 0} \limsup_{n \rightarrow \infty}\mathbb{P}(|V(f,\pi_n)_T-V(f_\rho,\pi_n)_T|> \delta)=0
\end{align}
for any $\delta>0$ in addition to \eqref{proof_general22_1} and \eqref{proof_general22_2}. To this end we consider the following inequality which is obtained as in \eqref{proof_general22_2}
\begin{align}\label{proof_general22_3b}
|V(f,\pi_n)_T-V(f_\rho,\pi_n)_T| \nonumber
&\leq \sum_{i,j:t^{(1)}_{i,n} \vee t^{(2)}_{j,n} \leq T} 2K_\rho \big|\Delta_{i,n}^{(1)}X^{(1)} \big|^2 \big|\Delta_{j,n}^{(2)}X^{(2)} \big|^2 \\&~~~~~~~~~~~~~~~~\times \mathds{1}_{\{|\Delta_{i,n}^{(1)}X^{(1)}\Delta_{j,n}^{(2)}X^{(2)}|<\rho\}} 
\mathds{1}_{\{\mathcal{I}_{i,n}^{(1)} \cap \mathcal{I}_{j,n}^{(2)}\neq \emptyset \}}.~~~~~~~
\end{align}
Looking at the proof of (A.6) in \cite{MarVet18a} we conclude that \eqref{proof_general22_3b} vanishes in probability as first $n \rightarrow \infty$ and then $\rho \rightarrow 0$ if 
\begin{multline}\label{proof_general22_4}
\lim_{\rho \rightarrow 0} \lim_{q \rightarrow \infty} \limsup_{n \rightarrow \infty} \mathbb{P}\big(\sum_{i,j:t^{(1)}_{i,n} \vee t^{(2)}_{j,n} \leq T} 2K_\rho \big|\Delta_{i,n}^{(1)}N^{(1)}(q) \big|^2 \big|\Delta_{j,n}^{(2)}N^{(2)}(q) \big|^2 
\\ \times \mathds{1}_{\{|\Delta_{i,n}^{(1)}X^{(1)}\Delta_{j,n}^{(2)}X^{(2)}|<\rho\}} 
\mathds{1}_{\{\mathcal{I}_{i,n}^{(1)} \cap \mathcal{I}_{j,n}^{(2)}\neq \emptyset \}}   >\delta \big)=0
\end{multline}
holds for any $\delta>0$. Denote by $S_p$ the jump times of $N(q)$ and let $\Omega(n,q,\rho)$ be the set on which no two jump times $S_p,S_{p'} \leq T$ fulfill $|S_p-S_{p'}|\leq 2 |\pi_n|_T$, it holds $S_p \leq T-|\pi_n|_T$ and $$|\Delta N^{(1)}(q)_{{S}_p}\Delta N^{(2)}(q)_{{S}_p}-\Delta_{i_n^{(1)}({S}_p),n}X^{(1)}\Delta_{i_n^{(2)}({S}_p),n}X^{(2)}|\leq \rho$$ for any ${S_p \leq T}.$ Further we denote by $\widetilde{S}_p^{\rho}$ the jump times of $N(q)$ with $${|\Delta N^{(1)}(q)_{\widetilde{S}_p^{\rho}}\Delta N^{(2)}(q)_{\widetilde{S}_p^{\rho}}|<2\rho}.$$ Then it holds
\begin{align*}
 &\sum_{i,j:t^{(1)}_{i,n} \vee t^{(2)}_{j,n} \leq T} 2K_\rho \big|\Delta_{i,n}^{(1)}N^{(1)}(q) \big|^2 \big|\Delta_{j,n}^{(2)}N^{(2)}(q) \big|^2 
\\&~~~~~~~~~~~~~~~~~~~~~~~~~~~~~~~~~~~~~~~~\times \mathds{1}_{\{|\Delta_{i,n}^{(1)}X^{(1)}\Delta_{j,n}^{(2)}X^{(2)}|<\rho\}} 
\mathds{1}_{\{\mathcal{I}_{i,n}^{(1)} \cap \mathcal{I}_{j,n}^{(2)}\neq \emptyset \}} \mathds{1}_{\Omega(n,q,\rho)}
\\&~~~~\leq  2K_\rho\sum_{p:\widetilde{S}_p^{\rho}  \leq T}  \big|\Delta N^{(1)}(q)_{\widetilde{S}_p^{\rho}} \big|^2 \big|\Delta N^{(2)}(q)_{\widetilde{S}_p^{\rho}} \big|^2 
\\&~~~~~~~~~~~~~~~~~~~~~~~~~~~~~~~~~~~~~~~~\times \mathds{1}_{\{|\Delta_{i_n^{(1)}(\widetilde{S}_p^{\rho}),n}^{(1)}X^{(1)}\Delta_{i_n^{(2)}(\widetilde{S}_p^{\rho}),n}^{(2)}X^{(2)}|<\rho\}}  \mathds{1}_{\Omega(n,q,\rho)}
 \\&~~~~ \leq  2K_\rho\sum_{p:\widetilde{S}_p^{\rho}  \leq T}  \big|\Delta N^{(1)}(q)_{\widetilde{S}_p^{\rho}} \big|^2 \big|\Delta N^{(2)}(q)_{\widetilde{S}_p^{\rho}} \big|^2  \mathds{1}_{\{|\Delta N^{(1)}(q)_{\widetilde{S}_p^{\rho}} \Delta N^{(2)}(q)_{\widetilde{S}_p^{\rho}}|< 2\rho\}} \mathds{1}_{\Omega(n,q,\rho)}
  \\&~~~~ \leq  2K_\rho\sum_{s  \leq T}  \big|\Delta X^{(1)}_{s} \big|^2 \big|\Delta X^{(2)}_{s} \big|^2  \mathds{1}_{\{|\Delta X^{(1)}_{s} \Delta X^{(2)}_{s}|< 2\rho\}} 
\end{align*}
where the expression in the last line vanishes as $\rho \rightarrow 0$. Together with $\mathbb{P}(\Omega(n,q,\rho))\rightarrow 1$ as $n \rightarrow \infty $ for any $q, \rho >0$ this yields \eqref{proof_general22_4} and hence  \eqref{proof_general22_3}.
\end{proof}

\begin{proof}[Proof of Theorem \ref{general_1plusBeta}]
Set $f_\rho(x,y)=f(x,y)\psi(|xy|/\rho)$ like in the proof of Theorem \ref{general_22}. As in the proof of Theorem \ref{general_22} we obtain $V(f_\rho,\pi_n)_T \overset{\mathbb{P}}{\longrightarrow} B^*(f_\rho)_T$ and
\begin{align}\label{muirhead_trick}
&\lim_{\rho \rightarrow 0}|B^*(f)_T-B^*(f_\rho)_T|\leq \lim_{\rho \rightarrow 0}\sum_{s \leq T} 2 K_\rho |\Delta X^{(1)}_s|^{p_1}|\Delta X^{(2)}_s|^{p_2}\mathds{1}_{\{|\Delta X^{(1)}_s\Delta X^{(2)}_s|<\rho \}} \nonumber
\\&~~~~~~~~~~~~ \leq \lim_{\rho \rightarrow 0} K_\rho \sum_{s \leq T} \big( |\Delta X^{(1)}_s|^{p_1+p_2}+|\Delta X^{(2)}_s|^{p_1+p_2} \big)\mathds{1}_{\{|\Delta X^{(1)}_s\Delta X^{(2)}_s|<\rho \}} \nonumber
\\&~~~~~~~~~~~~=0
\end{align}
because of $p_1+p_2 \geq 2$ where we used Muirhead's inequality as in Theorem 45 of \cite{HaLiPo52}, which yields $2a^{p_1}b^{p_2} \leq a^{p_1+p_2}+b^{p_1+p_2}$ for any $a,b \geq0$. Hence it remains to show
\begin{align}\label{proof_general_1plusBeta_1}
\lim_{\rho \searrow 0} \limsup_{n \rightarrow \infty} \mathbb{P}(|V(f,\pi_n)_T-V(f_\rho,\pi_n)_T|> \varepsilon)=0~~~ \forall \varepsilon>0.
\end{align}
Because of $f(x,y)=o(|x|^{p_1}|y|^{p_2})$ as $|xy|\rightarrow 0$ we obtain as in \eqref{proof_general22_3b}
\begin{gather}\label{proof_general_1plusBeta_2}
|V(f,\pi_n)_T-V(f_\rho,\pi_n)_T|\leq 2K_\rho  \sum_{i,j:t_{i,n}^{(1)} \vee t_{j,n}^{(2)}\leq T} \big|\Delta_{i,n}^{(1)} X^{(1)}\big|^{p_1}\big|\Delta_{j,n}^{(2)} X^{(2)} \big|^{p_2}\mathds{1}_{\{\mathcal{I}_{i,n}^{(1)} \cap \mathcal{I}_{j,n}^{(2)}\neq \emptyset \}}
\end{gather}
with $K_\rho \rightarrow 0$ as $\rho \rightarrow 0$. Define stopping times $(T_k^n)_{k \in \mathbb{N}_0}$ via $T_0^n=0$ and $$T_k^n=\inf\{t_{i,n}^{(l)}>T_{k-1}^n|i \in \mathbb{N}_0, l=1,2\}, \quad k \geq 1.$$ Hence the $T_k^n$ mark the times, where at least one of the processes $X^{(1)}$ or $X^{(2)}$ is newly observed. Further we set 
\begin{align*}
&\tau_-^{(l)}(s)=\sup\{t_{i,n}^{(l)}\leq s| i \in \mathbb{N}_0\},
\\&\tau_+^{(l)}(s)=\inf\{t_{i,n}^{(l)}\geq s| i \in \mathbb{N}_0\}
\end{align*}
for the observation times of $X^{(l)}$, $l=1,2$, immediately before and after $s$. Then we denote
\begin{align*}
&\Delta_k^n X^{(l)}=X^{(l)}_{T_k}-X^{(l)}_{T_{k-1}},
\\&\Delta_k^{n,l,-}X^{(l)}=X^{(l)}_{T_{k-1}}-X^{(l)}_{\tau_-^{(l)}(T_{k-1}^n)},
\\&\Delta_k^{n,l,+}X^{(l)}=X^{(l)}_{\tau_+^{(l)}(T_{k}^n)}-X^{(l)}_{T_k}
\end{align*}
for $l=1,2$. Then
\begin{align*}
&\sum_{i,j:t_{i,n}^{(1)} \vee t_{j,n}^{(2)}\leq T} \big|\Delta_{i,n}^{(1)} X^{(1)}\big|^{p_1}\big|\Delta_{j,n}^{(2)} X^{(2)} \big|^{p_2}\mathds{1}_{\{\mathcal{I}_{i,n}^{(1)} \cap \mathcal{I}_{j,n}^{(2)}\neq \emptyset \}}
\\&~~~~~~ \leq \sum_{k:T_k^n \leq T} \prod_{l=1}^2 \big|\Delta_k^{n,l,-}X^{(l)}+\Delta_k^{n}X^{(l)}+\Delta_k^{n,l,+}X^{(l)} \big|^{p_l}
\\&~~~~~~ \leq \sum_{k:T_k^n \leq T}  \prod_{l=1}^2 K_{p_l}\big(\big|\Delta_k^{n,l,-}X^{(l)} \big|^{p_l}+\big|\Delta_k^{n}X^{(l)} \big|^{p_l}+\big|\Delta_k^{n,l,+}X^{(l)} \big|^{p_l}\big).
\end{align*}

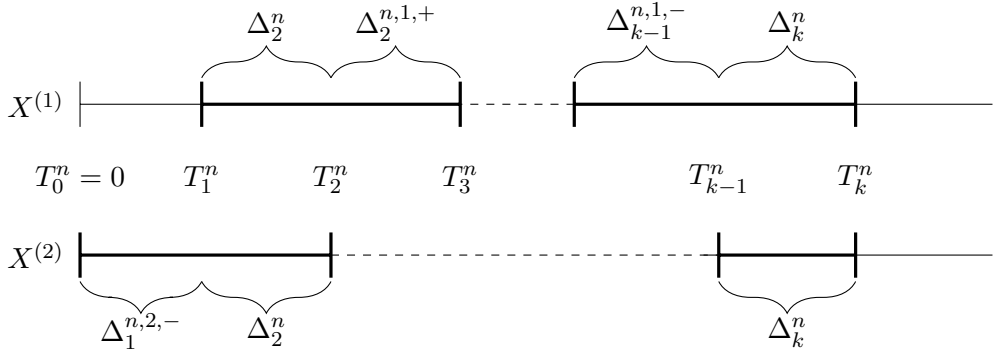
\begin{figure}[tb]
\centering
\hspace{-0.6cm}
\begin{tikzpicture}
\draw[dashed] (5,1) -- (8,1)
			(3.8,-1) -- (8.9,-1);
\draw[very thick] (2.1,1) -- (5.5,1)
		(7,1) -- (10.7,1)
		(2.1,0.7) -- (2.1,1.3)
		(5.5,0.7) -- (5.5,1.3)
		(7,0.7) -- (7,1.3)
		(10.7,0.7) -- (10.7,1.3)
		(0.5,-1) -- (3.8,-1)
		(8.9,-1) -- (10.7,-1)
		(0.5,-0.7) -- (0.5,-1.3)
		(3.8,-0.7) -- (3.8,-1.3)
		(8.9,-0.7) -- (8.9,-1.3)
		(10.7,-0.7) -- (10.7,-1.3);
\draw (0.5,1) -- (5,1)
	  (8,1) -- (12.5,1)
		(0.5,-1) -- (3.7,-1)
		(8.7,-1) -- (12.5,-1)
		(0.5,0.7) -- (0.5,1.3);
\draw (0.5,0) node{$T_0^n=0$}
		(2.1,0) node{$T_1^n$}
		(3.8,0) node{$T_2^n$}
		(5.5,0) node{$T_3^n$}
		(8.9,0) node{$T_{k-1}^n$}
		(10.7,0) node{$T_{k}^n$};	
\draw[decorate,decoration={brace,amplitude=12pt}]
	(2.1,1.35)--(3.8,1.35) node[midway, above,yshift=10pt,]{$\Delta_2^n$};
	\draw[decorate,decoration={brace,amplitude=12pt}]
	(3.8,1.35)--(5.5,1.35) node[midway, above,yshift=10pt,]{$\Delta_2^{n,1,+}$};
	\draw[decorate,decoration={brace,amplitude=12pt}]
	(7,1.35)--(8.9,1.35) node[midway, above,yshift=10pt,]{$\Delta_{k-1}^{n,1,-}$};
	\draw[decorate,decoration={brace,amplitude=12pt}]
	(8.9,1.35)--(10.7,1.35) node[midway, above,yshift=10pt,]{$\Delta_k^{n}$};
\draw[decorate,decoration={brace,amplitude=12pt}]
	(10.7,-1.35)--(8.9,-1.35) node[midway, below,yshift=-10pt,]{$\Delta_k^{n}$};
	\draw[decorate,decoration={brace,amplitude=12pt}]
	(3.8,-1.35)--(2.1,-1.35) node[midway, below,yshift=-10pt,]{$\Delta_2^{n}$};
	\draw[decorate,decoration={brace,amplitude=12pt}]
	(2.1,-1.35)--(0.5,-1.35) node[midway, below,yshift=-8pt,]{$\Delta_1^{n,2,-}$};
\draw (0.5,1) node[left,xshift=-2pt]{$X^{(1)}$}
(0.5,-1) node[left,xshift=-2pt]{$X^{(2)}$};
\end{tikzpicture}
\caption{Merged observation times and interval lengths to previous and upcoming observation times.}
\end{figure}

The $\mathcal{S}$-conditional expectation of this quantity is bounded by
\begin{align}\label{proof_general_1plusBeta_3}
&\sum_{k:T_k^n \leq T} \mathbb{E} [\prod_{l=1}^2 K_{p_l}\big(|\Delta_k^{n,l,-}X^{(l)} |^{p_l}+\big|\Delta_k^{n}X^{(l)} |^{p_l}+|\Delta_k^{n,l,+}X^{(l)} |^{p_l}\big)| \mathcal{S}] \nonumber
\\&~~ =K_{p_1}K_{p_2}\sum_{k:T_k^n \leq T} \mathbb{E} [|\Delta_k^{n}X^{(1)} |^{p_1}|\Delta_k^{n}X^{(2)} |^{p_2}| \mathcal{S}] \nonumber
\\&~~~~+K_{p_1}K_{p_2} \sum_{l=1,2} \sum_{k:T_k^n \leq T} \mathbb{E} \big[|\Delta_k^{n,l,-}X^{(l)} |^{p_l}  \nonumber
\\&~~~~~~~~~~~~~~~~~~~~\times \mathbb{E}[|\Delta_k^{n}X^{(3-l)} |^{p_{3-l}}+|\Delta_k^{n,3-l,+}X^{(3-l)} |^{p_{3-l}}|\sigma(\mathcal{F}_{T_{k-1}^n},\mathcal{S})]\big| \mathcal{S}\big]~~~~~ \nonumber
\\&~~~~+K_{p_1}K_{p_2} \sum_{l=1,2}\sum_{k:T_k^n \leq T} \mathbb{E} \big[|\Delta_k^{n}X^{(l)} |^{p_l} \mathbb{E}[|\Delta_k^{n,3-l,+}X^{(3-l)} |^{p_{3-l}}|\sigma(\mathcal{F}_{T_{k}^n},\mathcal{S})]\big| \mathcal{S}\big]
\end{align}
where we used $$|\Delta_k^{n,l,-}X^{(l)} |^{p_{l}}|\Delta_k^{n,3-l,-}X^{(3-l)} |^{p_{3-l}}=|\Delta_k^{n,l,+}X^{(l)} |^{p_{l}}|\Delta_k^{n,3-l,+}X^{(3-l)} |^{p_{3-l}}=0$$ which holds because one of each two increments is always zero. Further, using $$|\Delta_k^{n}X^{(1)}|^{p_1}|\Delta_k^{n}X^{(2)} |^{p_2}\leq 2\|\Delta_k^n X\|^{p_1+p_2},$$ which we obtain as in \eqref{muirhead_trick}, and inequality \eqref{elem_ineq_X}, \eqref{proof_general_1plusBeta_3} is bounded by
\begin{align*}
&K_{p_1}K_{p_2}\sum_{k:T_k^n \leq T} \mathbb{E} [\|\Delta_k^{n}X \|^{p_1+p_2}| \mathcal{S}] 
\\&~~~~~+K_{p_1}K_{p_2} \sum_{l=1,2} \sum_{k:T_k^n \leq T} \mathbb{E} [|\Delta_k^{n,l,-}X^{(l)} |^{p_l}   K  (\tau_+^{(3-l)}(T_{k}^n)- T_{k-1}^n)^{\frac{p_{3-l}}{2} \wedge 1}| \mathcal{S}]
\\&~~~~~+K_{p_1}K_{p_2} \sum_{l=1,2}\sum_{k:T_k^n \leq T} \mathbb{E} [|\Delta_k^{n}X^{(l)} |^{p_l} K  (\tau_+^{(3-l)}(T_{k}^n)- T_{k}^n)^{\frac{p_{3-l}}{2} \wedge 1}| \mathcal{S}]
\\&~\leq K_{p_1}K_{p_2}\sum_{k:T_k^n \leq T} K(T_{k}^n-T_{k-1}^n)
\\&~~~~~+4 K_{p_1}K_{p_2}\sum_{k:T_k^n \leq T}\prod_{l=1,2}
(\tau_+^{(l)}(T_{k}^n)-\tau_-^{(l)}(T_{k-1}^n))^{\frac{p_{l}}{2} \wedge 1}
\\&~ \leq K_{p_1,p_2}T+ K_{p_1,p_2} \sum_{i,j:t_{i,n}^{(1)} \vee t_{j,n}^{(2)}\leq T} K |\mathcal{I}_{i,n}^{(1)}|^{\frac{p_1}{2}\wedge 1}|\mathcal{I}_{j,n}^{(2)} |^{\frac{p_2}{2}\wedge 1}\mathds{1}_{\{\mathcal{I}_{i,n}^{(1)} \cap \mathcal{I}_{j,n}^{(2)}\neq \emptyset \}}.
\end{align*}
This expression is bounded in probability by condition \eqref{cons_cond_1plusBeta} and hence the right hand side of \eqref{proof_general_1plusBeta_2} vanishes for $\rho \rightarrow 0$ due to $K_\rho \rightarrow 0$ as $\rho \rightarrow 0$ which yields  \eqref{proof_general_1plusBeta_1}.
\end{proof}

\begin{proof}[Proof of Theorem \ref{general_1plusBeta_end1}]
Comparing the proof of Theorem \ref{general_1plusBeta} it is sufficient to show that 
\begin{align}\label{proof_general_1plusBeta_end1_1}
Y(n)=\sum_{i,j:t_{i,n}^{(1)} \vee t_{j,n}^{(2)}\leq T} \big|\Delta_{i,n}^{(1)} X^{(1)}\big|^{p_1}\big|\Delta_{j,n}^{(2)} X^{(2)} \big|^{p_2} \mathds{1}_{\{\mathcal{I}_{i,n}^{(1)} \cap \mathcal{I}_{j,n}^{(2)}\neq \emptyset \}}
\end{align}
is bounded in probability as $n \rightarrow \infty$. To this end fix $\delta>0$, choose $\varepsilon>0$ such that $3\varepsilon<\delta$ and define $$\Omega(n,\varepsilon)=\{\sup_{i:t_{i,n}^{(l)} \leq T} \sum_{j \in \mathbb{N}} \mathds{1}_{\{\mathcal{I}_{i,n}^{(l)} \cap \mathcal{I}_{j,n}^{(3-l)}\neq \emptyset \}}> N_\varepsilon,~l=1,2\}$$ with $N_\varepsilon$ as in \eqref{cons_cond_1plusBeta_end1}. We then obtain using H\"older's inequality 
\begin{align*}
Y(n) \mathds{1}_{\Omega(n,\varepsilon)^C} &\leq \sum_{i,j:t_{i,n}^{(1)} \vee t_{j,n}^{(2)}\leq T} \big(\big(\Delta_{i,n}^{(1)} X^{(1)}\big)^2+\big(\Delta_{j,n}^{(2)} X^{(2)} \big)^2\big)\mathds{1}_{\{\mathcal{I}_{i,n}^{(1)} \cap \mathcal{I}_{j,n}^{(2)}\neq \emptyset \}}\mathds{1}_{\Omega(n,\varepsilon)^C}
\\&\leq N_\varepsilon \sum_{l=1,2} \sum_{i:t_{i,n}^{(l)} \leq T} \big(\Delta_{i,n}^{(l)} X^{(l)}\big)^2
\end{align*}
where the sums in the last line converge to the quadratic variations $[X^{(l)},X^{(l)}]_T$, $l=1,2$. If we further choose $K_\varepsilon>0$ such that $\mathbb{P}([X^{(l)},X^{(l)}]_T> K_\varepsilon(1+\xi))<\varepsilon$, $l=1,2$, for some $\xi>0$ we obtain
\begin{align*}
&\limsup_{n \rightarrow \infty} \mathbb{P}(Y(n)>2N_\varepsilon K_\varepsilon)
\\&~~~~~\leq \limsup_{n \rightarrow \infty} \mathbb{P}(\Omega(n,\varepsilon)) +\limsup_{n \rightarrow \infty} \sum_{l=1,2}\mathbb{P}\big(\sum_{i:t_{i,n}^{(l)} \leq T} \big(\Delta_{i,n}^{(l)} X^{(l)}\big)^2> K_\varepsilon\big)<\delta.
\end{align*}
As $\delta>0$ can be chosen arbitrarily this yields the boundedness in probability of $Y(n)$, $n \in \mathbb{N}$.
\end{proof}

\subsection{Proofs for Section \ref{sec:normFunc}}\label{subsection:lln_norm_proof}

The following lemma contains estimates for positively homogeneous functions which will be used in the upcoming proofs. The proof is elementary and therefore skipped here.

\begin{lemma}\label{lemma_prop_pos_hom_2d} 
Let $f:\mathbb{R}^{d_1}\times \mathbb{R}^{d_2} \rightarrow \mathbb{R}$, $d_1,d_2 \in \mathbb{N}_0$, be a continuous function which is positively homogeneous with degree $p_1 \geq 0$ in the first argument and with degree $p_2 \geq 0$ in the second argument. Then there exists a constant $K$ with
\begin{align}\label{ineq_pos_hom_abs_2d}
|f(x_1,x_2)| \leq K\|x_1\|^{p_1}\|x_2\|^{p_2}~~\forall x_1 \in \mathbb{R}^{d_1}, x_2 \in \mathbb{R}^{d_2}.
\end{align}
Further there exists a function $\theta:[0,\infty)\rightarrow [0,\infty)$ depending on $f$ and $p$ with $\theta(\varepsilon)\rightarrow 0$ for $\varepsilon \rightarrow 0$ and a constant $K_{p,\varepsilon}$ which may depend on $f$ such that
\begin{multline}\label{ineq_pos_hom_inc_2d}
|f(x_1+y_1,x_2+y_2)-f(x_1,x_2)|\leq \theta(\varepsilon)\|x_1\|^{p_1}\|x_2\|^{p_2}\\+K_{p_1,p_2,\varepsilon}(\|y_1\|^{p_1}(\|x_2\|^{p_2}+\|y_2\|^{p_2})+\|y_2\|^{p_2}(\|x_1\|^{p_1}+\|y_1\|^{p_1}))
\end{multline}
holds for all $x_1,y_1 \in \mathbb{R}^{d_1}$ and $x_2,y_2 \in \mathbb{R}^{d_2}$. In the case $p_2=0$ the inequality \eqref{ineq_pos_hom_inc_2d} can be replaced by
\begin{align}\label{ineq_pos_hom_inc_1d}
|f(x_1+y_1,x_2+y_2)-f(x_1,x_2)|\leq \theta(\varepsilon)\|x_1\|^{p_1}+K_{p_1,p_2,\varepsilon}\|y_1\|^{p_1}
\end{align}
for all $x_1,y_1 \in \mathbb{R}^{d_1}$ and $x_2,y_2 \in \mathbb{R}^{d_2}$. The analogous result holds if $p_1=0$.
\end{lemma}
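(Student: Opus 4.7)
The plan is to reduce both estimates to compactness of products of unit spheres once norms are factored out via positive homogeneity. For \eqref{ineq_pos_hom_abs_2d}, whenever $x_1,x_2\neq 0$ positive homogeneity in each argument separately gives $f(x_1,x_2)=\|x_1\|^{p_1}\|x_2\|^{p_2}f(\hat x_1,\hat x_2)$ with $\hat x_l := x_l/\|x_l\|$. Since $f$ is continuous on the compact set $S^{d_1-1}\times S^{d_2-1}$, the constant $K:=\sup_{\|u\|=\|v\|=1}|f(u,v)|$ is finite and yields the bound. On the coordinate axes positive homogeneity forces $f(0,x_2)=0$ whenever $p_1>0$ (and symmetrically for $x_2=0$, $p_2>0$); the cases $p_l=0$ are reduced below to a function of the other variable alone.

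For \eqref{ineq_pos_hom_inc_2d} I fix $\varepsilon>0$ and split according to a threshold $\delta=\delta(\varepsilon)>0$ to be chosen. In the \emph{main case}, where $\|y_l\|\leq\delta\|x_l\|$ for both $l=1,2$, every relevant vector is nonzero, the normalized vectors satisfy $\|\widehat{x_l+y_l}-\hat x_l\|\leq 2\delta/(1-\delta)$, and the norm ratios obey $|\|x_l+y_l\|^{p_l}/\|x_l\|^{p_l}-1|\leq C_{p_l}\delta$. Using the factorization from the first part and adding and subtracting the hybrid term $\|x_1\|^{p_1}\|x_2\|^{p_2}f(\widehat{x_1+y_1},\widehat{x_2+y_2})$, the difference is bounded by
\[
\bigl(C_{p_1,p_2}\,\delta\cdot K+K\,\omega_f(2\delta/(1-\delta))\bigr)\|x_1\|^{p_1}\|x_2\|^{p_2},
\]
where $\omega_f$ is the modulus of uniform continuity of $f$ on $S^{d_1-1}\times S^{d_2-1}$. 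Setting $\theta(\varepsilon)$ to be this prefactor for a suitable choice $\delta(\varepsilon)$ gives $\theta(\varepsilon)\to 0$ as $\varepsilon\to 0$.

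In the \emph{complementary case}, say $\|y_1\|>\delta\|x_1\|$ (the subcase $\|y_2\|>\delta\|x_2\|$ is symmetric), the triangle inequality combined with \eqref{ineq_pos_hom_abs_2d} yields
\[
|f(x_1+y_1,x_2+y_2)-f(x_1,x_2)|\leq K\|x_1+y_1\|^{p_1}\|x_2+y_2\|^{p_2}+K\|x_1\|^{p_1}\|x_2\|^{p_2}.
\]
Using $\|x_1\|^{p_1}\leq\delta^{-p_1}\|y_1\|^{p_1}$, $\|x_1+y_1\|^{p_1}\leq(1+\delta^{-1})^{p_1}\|y_1\|^{p_1}$ and the elementary bound $\|x_2+y_2\|^{p_2}\leq 2^{p_2}(\|x_2\|^{p_2}+\|y_2\|^{p_2})$, every term acquires a factor $\|y_1\|^{p_1}$ and fits into $K_{p_1,p_2,\varepsilon}\|y_1\|^{p_1}(\|x_2\|^{p_2}+\|y_2\|^{p_2})$; the symmetric subcase contributes $K_{p_1,p_2,\varepsilon}\|y_2\|^{p_2}(\|x_1\|^{p_1}+\|y_1\|^{p_1})$.

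Finally, the degenerate case $p_2=0$ is handled first by observing that continuity of $f$ combined with $f(x_1,\lambda x_2)=f(x_1,x_2)$ for every $\lambda>0$ forces, on letting $\lambda\downarrow 0$, $f(x_1,x_2)=f(x_1,0)=:g(x_1)$, where $g$ is continuous and positively homogeneous of degree $p_1$; applying the one-variable analog of the case analysis above to $g$ alone then delivers \eqref{ineq_pos_hom_inc_1d}, and the case $p_1=0$ is symmetric. The main obstacle is the bookkeeping in the complementary case: one must verify that the products arising from expanding $\|x_1+y_1\|^{p_1}\|x_2+y_2\|^{p_2}$ and absorbing $\|x_1\|^{p_1}$ into $\delta^{-p_1}\|y_1\|^{p_1}$ land precisely in the form $\|y_1\|^{p_1}(\|x_2\|^{p_2}+\|y_2\|^{p_2})+\|y_2\|^{p_2}(\|x_1\|^{p_1}+\|y_1\|^{p_1})$ quoted in the lemma rather than a strictly larger expression; everything else is routine continuity on compacta.
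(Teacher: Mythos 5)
The paper itself gives no proof of this lemma (it is declared ``elementary and therefore skipped''), so there is nothing to compare against; judged on its own, your argument is correct and is the standard one: factor out norms by homogeneity, use compactness of $S^{d_1-1}\times S^{d_2-1}$ for \eqref{ineq_pos_hom_abs_2d}, and split into the regime $\|y_l\|\leq\delta\|x_l\|$ (uniform continuity on the sphere gives the $\theta(\varepsilon)$-term) versus $\|y_l\|>\delta\|x_l\|$ (absorb $\|x_l\|^{p_l}$ into $\delta^{-p_l}\|y_l\|^{p_l}$), exactly as in the one-dimensional version underlying Lemma 3.4.4 of \cite{JacPro12}. The only point worth a sentence in a written-out version is the boundary of your ``main case'': if $x_l=0$ there then $y_l=0$ as well, and the difference is zero when $p_l>0$ (since $f$ vanishes on that axis) while the case $p_l=0$ is covered by your reduction to a function of the other variable, so no gap arises.
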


To discuss the synchronous setting and the asynchronous setting simultaneously we consider a $(d_1+d_2)$-dimensional It\^o semimartingale $\widetilde{X}$ of the form \eqref{ItoSemimart}. $\widetilde{X}^{(1)}$ denotes the vector-valued process containing the first $d_1$ components of $\widetilde{X}$ and is observed at the observation times $t_{i,n}^{(1)}$, $\widetilde{X}^{(2)}$ contains the remaining $d_2$ components of $\widetilde{X}$ and is observed at the observation times $t_{i,n}^{(2)}$. Then the synchronous setting and the asynchronous setting discussed in Section \ref{sec:normFunc} correspond to $d_1=2,~d_2=0$, $p_1=p,~p_2=0$, and $d_1=d_2=1$ respectively. Here the notion of a zero-dimensional semimartingale remains ambiguous. However, we will only plug in increments of this zero-dimensional process into the argument of $f$ in which $f$ is positively homogeneous of degree $p_2=0$ and then $f$ is by Definition \ref{def_pos_hom} constant in this argument. Hence it is not necessary to specify the notion of a zero-dimensional semimartingale as we are going to use it only to indicate the case where the function $f$ solely depends on the first argument. Further we define
\begin{align}\label{V_star}
\overline{V}^*(p,f,\pi_n)_T=n^{p/2-1}\sum_{i,j \geq 0 :t_{i,n}^{(1)}\vee t_{j,n}^{(2)} \leq T} f\big(\Delta_{i,n}^{(1)}\widetilde{X}^{(1)},\Delta_{j,n}^{(2)}\widetilde{X}^{(2)}\big) \mathds{1}^{*,n}_{d_1,d_2}(i,j)
\end{align}
for all functions $f:\mathbb{R}^{d_1}\times \mathbb{R}^{d_2} \rightarrow \mathbb{R}$ where we set $\mathcal{I}_{0,n}^{(l)}= \emptyset$, $\Delta_{0,n}^{(l)}X^{(l)}=0$, $l=1,2$, and
\begin{align}\label{star_indicator}
\mathds{1}^{*,n}_{d_1,d_2}(i,j)= \begin{cases} \mathds{1}_{\{\mathcal{I}_{i,n}^{(1)} \cap \mathcal{I}_{j,n}^{(2)} \neq \emptyset\}}, &d_1>0, d_2 >0,
\\ \mathds{1}_{\{i>0,j=0\}}, & d_1>0,d_2=0,
\\ \mathds{1}_{\{i=0,j>0\}}, & d_1=0,d_2>0. \end{cases}
\end{align}
That means, we start the sum in \eqref{V_star} at zero and define the indicator $\mathds{1}^{*,n}_{d_1,d_2}(i,j)$ in such a way that, whenever $d_{3-l}=0$, for any $i \in \mathbb{N}$ with $t_{i,n}^{(l)} \leq T$ exactly one summand depending on $\Delta_{i,n}^{(l)}\widetilde{X}^{(l)}$ occurs in the sum. Hence $\overline{V}^*(p,f,\pi_n)_T$ corresponds to $\overline{V}(p,f,\pi_n)_T$ in the asynchronous setting for $d_1=d_2=1$ and $\overline{V}^*(p,f,\pi_n)_T$ corresponds to $\overline{V}(p,f,\pi_n)_T$ in the synchronous setting for $d_l=2$, $d_{3-l}=0$.

\begin{definition} We denote by 
\begin{align*}
\overline{C}^*(p,f,\pi_n)_T=n^{p/2-1}\sum_{i,j:t_{i,n}^{(1)}\vee t_{j,n}^{(2)} \leq T} f\big(\Delta_{i,n}^{(1)}\widetilde{C}^{(1)},\Delta_{j,n}^{(2)}\widetilde{C}^{(2)}\big) \mathds{1}^{*,n}_{d_1,d_2}(i,j)
\end{align*}
the functional $\overline{V}^*(p,f,\pi_n)_T$ evaluated at the continuous martingale part $\widetilde{C}$ instead of at $\widetilde{X}$ itself. $\overline{C}(p,f,\pi_n)_T$ is defined as $\overline{C}^*(p,f,\pi_n)_T$ above only with $\widetilde{C}$ replaced by $C$. \qed
\end{definition}

As for the indicator in \eqref{star_indicator} we also define a unifying notation for the functions $G^n_{p_1,p_2}(t)$ and $G^{(l),n}_p$ via
\begin{align}\label{star_G}
\widetilde{G}^{(d_1,d_2),n}_{p_1,p_2}(t)= \begin{cases} G^n_{p_1,p_2}(t), &d_1>0, d_2 >0,
\\ G^{(1),n}_{p_1}(t), & d_1>0,d_2=0,
\\ G^{(2),n}_{p_2}(t), & d_1=0,d_2>0. \end{cases}
\end{align}

The following proposition yields that by specifying $d_1,d_2$ appropriately as discussed above, it suffices to prove the convergences in Theorems \ref{theo_norm_2d_sync}, \ref{theo_norm_2d_uncor} and \ref{theo_norm_2d_integer} for $\overline{C}(p,f,\pi_n)_T$, $\overline{C}(p_1+p_2,f,\pi_n)_T$ instead of $\overline{V}(p,f,\pi_n)_T$, $\overline{V}(p_1+p_2,f,\pi_n)_T$. 

\begin{proposition}\label{lemma_norm_sync}
Let $f:\mathbb{R}^{d_1} \times\mathbb{R}^{d_2} \rightarrow \mathbb{R}$ be a function as in Lemma \ref{lemma_prop_pos_hom_2d}. Suppose that ${\widetilde{G}^{(d_1,d_2),n}_{p_1,p_2}(T)=O_{\mathbb{P}}(1)}$ and let either $p_1+p_2 \in [0,2)$ or $p_1+p_2 \geq 2$ and assume that $\widetilde{X}$ is continuous. Further we assume $d_l=0 \Rightarrow p_l=0$, $l=1,2$. Then it holds that
\begin{align}\label{lemma_norm_sync_equation}
\overline{V}^*(p_1+p_2,f,\pi_n)_T-\overline{C}^*(p_1+p_2,f,\pi_n)_T \overset{\mathbb{P}}\longrightarrow 0
\end{align}
as $n \rightarrow \infty$.
\end{proposition}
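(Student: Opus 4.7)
The plan is to pass from $\widetilde{X}$ to its continuous martingale part $\widetilde{C}$ inside $f$ by invoking the inequality \eqref{ineq_pos_hom_inc_2d} of Lemma \ref{lemma_prop_pos_hom_2d} (or \eqref{ineq_pos_hom_inc_1d} when $p_1=0$ or $p_2=0$) with $x_l = \Delta^{(l)}_{i,n}\widetilde{C}^{(l)}$ and $y_l = \Delta^{(l)}_{i,n}\widetilde{X}^{(l)} - \Delta^{(l)}_{i,n}\widetilde{C}^{(l)}$, where $Y := \widetilde{X}-\widetilde{C}$ denotes the residual. Summing the pointwise bound over $(i,j)$ and multiplying by $n^{p/2-1}$ with $p=p_1+p_2$ leaves a principal term of size $\theta(\varepsilon)\,\widetilde{G}^{(d_1,d_2),n}_{p_1,p_2}(T)$ together with cross terms of the form $n^{p/2-1}\sum_{i,j}\|\Delta Y^{(l)}\|^{p_l}\|\Delta\widetilde{C}^{(3-l)}\|^{p_{3-l}}\mathds{1}^{*,n}_{d_1,d_2}(i,j)$ and $n^{p/2-1}\sum_{i,j}\|\Delta Y^{(1)}\|^{p_1}\|\Delta Y^{(2)}\|^{p_2}\mathds{1}^{*,n}_{d_1,d_2}(i,j)$.

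The principal term is controlled by the hypothesis $\widetilde{G}^{(d_1,d_2),n}_{p_1,p_2}(T)=O_\mathbb{P}(1)$: conditioning on $\mathcal{S}$, using \eqref{elem_ineq_C} together with Jensen's inequality applied to the concave map $u\mapsto u^{p_l/2}$ for $p_l<2$, one gets $\mathbb{E}[\|\Delta\widetilde{C}^{(l)}\|^{p_l}\mid\mathcal{S}]\leq K|\mathcal{I}_{i,n}^{(l)}|^{p_l/2}$, and hence the $\mathcal{S}$-conditional expectation of the principal term is bounded by $K\theta(\varepsilon)\,\widetilde{G}^{(d_1,d_2),n}_{p_1,p_2}(T)$, which becomes negligible in probability by letting $\varepsilon \to 0$ at the very end of the argument.

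In the continuous regime $p_1+p_2\geq 2$ one has $Y=B$, and \eqref{elem_ineq_B} yields $\|\Delta^{(l)}_{i,n} B\|^{p_l}\leq K|\mathcal{I}^{(l)}_{i,n}|^{p_l}$, which is an extra factor $|\mathcal{I}^{(l)}_{i,n}|^{p_l/2}$ beyond the moment bound for $\|\Delta^{(l)}_{i,n}\widetilde{C}^{(l)}\|^{p_l}$; the cross terms are therefore dominated by $K|\pi_n|_T^{\eta}\,\widetilde{G}^{(d_1,d_2),n}_{p_1,p_2}(T)$ for some $\eta>0$ and vanish in probability (when $p_l=0$ the $l$-th factor drops out via \eqref{ineq_pos_hom_inc_1d}, which is why the hypothesis $d_l=0\Rightarrow p_l=0$ matters). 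In the low-exponent regime $p_1+p_2<2$ I use the full decomposition $Y=B(q)+M(q)+N(q)$ from \eqref{decomposition_ito}: the $B(q)+M(q)$ piece is handled exactly as $B$ above but with \eqref{elem_ineq_M} now providing a prefactor which is a positive power of $e_q$ tending to $0$ as $q\to\infty$; the big-jump piece $N(q)$ has only finitely many jumps on $[0,T]$ for each fixed $q$, so its cross sums are $O(1)$ in $n$ and are killed by the normalisation $n^{p/2-1}\to 0$. Sending $n\to\infty$ first, then $q\to\infty$, and finally $\varepsilon\to 0$ gives \eqref{lemma_norm_sync_equation}.

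The main obstacle is the bookkeeping of the double sum under $\mathds{1}^{*,n}_{d_1,d_2}(i,j)$: in the asynchronous regime a fixed interval $\mathcal{I}^{(1)}_{i,n}$ may overlap with arbitrarily many $\mathcal{I}^{(2)}_{j,n}$, so one cannot rely on a pathwise bound on the overlap multiplicity. The conditional moment estimates must be applied summand by summand and then reassembled into $\widetilde{G}^{(d_1,d_2),n}_{p_1,p_2}(T)$, on which the whole argument rests through the sole abstract hypothesis $\widetilde{G}^{(d_1,d_2),n}_{p_1,p_2}(T)=O_{\mathbb{P}}(1)$ rather than any more concrete condition on the observation mesh.
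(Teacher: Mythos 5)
Your overall strategy coincides with the paper's: apply \eqref{ineq_pos_hom_inc_2d} (resp.\ \eqref{ineq_pos_hom_inc_1d} when some $p_l=0$) with $x_l=\Delta^{(l)}\widetilde{C}^{(l)}$, $y_l=\Delta^{(l)}(\widetilde{X}-\widetilde{C})^{(l)}$, control the $\theta(\varepsilon)$-term by $\widetilde{G}^{(d_1,d_2),n}_{p_1,p_2}(T)$, treat the case $p_1+p_2\geq 2$ via the pathwise bound \eqref{elem_ineq_B} on the drift, and in the case $p_1+p_2<2$ decompose the residual into $B(q)+M(q)$ and $N(q)$. The principal term and the drift/small-jump pieces are handled essentially as in the paper (modulo the Cauchy--Schwarz/H\"older step needed to separate the two dependent increments over overlapping intervals before applying the conditional moment bounds, which you gloss over but which is routine).

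There is, however, a genuine gap in your treatment of the big-jump cross terms when $p_1+p_2<2$. You claim that because $N(q)$ has finitely many jumps on $[0,T]$, "its cross sums are $O(1)$ in $n$ and are killed by the normalisation $n^{p/2-1}\to 0$." This is only valid for the term in which \emph{both} factors involve $N(q)$, where the number of contributing pairs $(i,j)$ is indeed eventually bounded. For the mixed terms, say $\|\Delta^{(1)}_{i,n}\widetilde{N}^{(1)}(q)\|^{p_1}\,\|\Delta^{(2)}_{j,n}\widetilde{C}^{(2)}\|^{p_2}$, the index $i$ ranges over finitely many intervals but $j$ ranges over \emph{all} intervals $\mathcal{I}^{(2)}_{j,n}$ overlapping $\mathcal{I}^{(1)}_{i,n}$, and the inner sum $\sum_j\|\Delta^{(2)}_{j,n}\widetilde{C}^{(2)}\|^{p_2}$ over these $j$ can diverge polynomially in $n$ when $p_2<2$ --- this is precisely the mechanism exhibited in Example \ref{ex_min_req2}, and the factor $n^{p/2-1}$ alone does not compensate for it. The argument must invoke the hypothesis $\widetilde{G}^{(d_1,d_2),n}_{p_1,p_2}(T)=O_{\mathbb{P}}(1)$ for this term as well: the paper does so via H\"older's inequality with exponent $p'=(2-\delta)/p_1$ for a suitably small $\delta>0$ together with the moment bound \eqref{elem_ineq_N}, which yields a factor $|\mathcal{I}^{(1)}_{i,n}|^{p_1/(2-\delta)}$ strictly better than $|\mathcal{I}^{(1)}_{i,n}|^{p_1/2}$; the surplus $(|\pi_n|_T)^{p_1/(2-\delta)-p_1/2}$ then multiplies $\widetilde{G}^{(d_1,d_2),n}_{p_1,p_2}(T)$ and forces the term to vanish. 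Without some such device your bound on the big-jump contribution does not close.
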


\begin{proof} In the following, we denote by $\widetilde{B}(q),\widetilde{C},\widetilde{M}(q),\widetilde{N}(q)$ a decomposition of $\widetilde{X}$ similar to \eqref{decomposition_ito}.  Using \eqref{ineq_pos_hom_inc_2d} we obtain
\begin{align}
&\big|\overline{V}^*(p_1+p_2,f,\pi_n)_T-\overline{C}^*(p_1+p_2,f,\pi_n)_T \big| \nonumber
\\&~ \leq n^{(p_1+p_2)/2-1}\sum_{i,j:t_{i,n}^{(1)} \vee t_{j,n}^{(2)}\leq T} \mathds{1}^{*,n}_{d_1,d_2}(i,j) \big[ \theta(\varepsilon)\|\Delta_{i,n}^{(1)}\widetilde{C}^{(1)}\|^{p_1}\|\Delta_{j,n}^{(2)}\widetilde{C}^{(2)}\|^{p_2} \label{lemma_norm_uncor_proof1}
\\&~~~+K_{p_1,p_2,\varepsilon}\|\Delta_{i,n}^{(1)}(\widetilde{X}-\widetilde{C})^{(1)}\|^{p_1}(\|\Delta_{j,n}^{(2)}\widetilde{C}^{(2)}\|^{p_2}+\|\Delta_{j,n}^{(2)}(\widetilde{X}-\widetilde{C})^{(2)}\|^{p_2}) \label{lemma_norm_uncor_proof2}
\\&~~~+K_{p_1,p_2,\varepsilon}\|\Delta_{j,n}^{(2)}(\widetilde{X}-\widetilde{C})^{(2)}\|^{p_2}(\|\Delta_{i,n}^{(1)}\widetilde{C}^{(1)}\|^{p_1}+\|\Delta_{i,n}^{(1)}(\widetilde{X}-\widetilde{C})^{(1)}\|^{p_1}) \big]. \label{lemma_norm_uncor_proof3}
\end{align}
For \eqref{lemma_norm_uncor_proof1} we get using the Cauchy-Schwarz inequality and inequality \eqref{elem_ineq_C}, as Lemma \ref{elem_ineq} holds for It\^o semimartingales of arbitrary dimension,
\begin{align*}
&\mathbb{E} \big[n^{(p_1+p_2)/2-1}\sum_{i,j:t_{i,n}^{(1)} \vee t_{j,n}^{(2)}\leq T}  \theta(\varepsilon)\|\Delta_{i,n}^{(1)}\widetilde{C}^{(1)}\|^{p_1}\|\Delta_{j,n}^{(2)}\widetilde{C}^{(2)}\|^{p_2} \mathds{1}^{*,n}_{d_1,d_2}(i,j) \big| \mathcal{S} \big] 
\\&~~\leq
\theta(\varepsilon) K \widetilde{G}^{(d_1,d_2),n}_{p_1,p_2}(T).
\end{align*}
Hence by Lemma 6.2 from \cite{MarVet18b} we obtain
\begin{gather*}
~\lim_{\varepsilon \rightarrow 0} \limsup_{n \rightarrow 0} \mathbb{P}\big(\theta(\varepsilon)n^{(p_1+p_2)/2-1}\sum_{i,j:t_{i,n}^{(1)} \vee t_{j,n}^{(2)}\leq T}\|\Delta_{i,n}^{(1)}\widetilde{C}^{(1)}\|^{p_1}\|\Delta_{j,n}^{(2)}\widetilde{C}^{(2)}\|^{p_2}  \mathds{1}^{*,n}_{d_1,d_2}(i,j)> \delta\big)=0~
\end{gather*}
for any $\delta>0$. To prove \eqref{lemma_norm_sync_equation} it now remains to show that \eqref{lemma_norm_uncor_proof2} vanishes as $n \rightarrow \infty$ for any $\varepsilon>0$ because then \eqref{lemma_norm_uncor_proof3} can be dealt with analogously by symmetry. We will separately discuss the cases $p_1>0$ and $p_1=0$.

\textit{Case 1.} We first consider the case where $p_1>0$. In the situation $p_1+p_2 \geq 2$ we have $\widetilde{X}=\widetilde{B}+\widetilde{C}$ with $\widetilde{B}_t=\int_0^t \tilde{b}_sds$ for some bounded process $\tilde{b}$. Hence we get that the $\mathcal{S}$-conditional expectation of \eqref{lemma_norm_uncor_proof2} is bounded by
\begin{align*}
 &K_{p_1,p_2,\varepsilon} n^{(p_1+p_2)/2-1}\sum_{i,j:t_{i,n}^{(1)} \vee t_{j,n}^{(2)}\leq T} |\mathcal{I}_{i,n}^{(1)}|^{p_1} (|\mathcal{I}_{j,n}^{(2)}|^{p_2}+|\mathcal{I}_{j,n}^{(2)}|^{p_2/2})\mathds{1}^{*,n}_{d_1,d_2}(i,j)
\\&~~ \leq K_{p_1,p_2,\varepsilon} (|\pi_n|_T)^{p_1/2} \widetilde{G}^{(d_1,d_2),n}_{p_1,p_2}(T)
\end{align*}
which vanishes as $n \rightarrow \infty$ for $p_1>0$. 

Next we consider \eqref{lemma_norm_uncor_proof2} in the case $p_1+p_2 < 2$. Using $$\|\Delta_{i,n}^{(1)}(\widetilde{X}-\widetilde{C})^{(1)}\|^{p_1} \leq K_{p_1} (\|\Delta_{i,n}^{(1)}(\widetilde{B}(q)+\widetilde{M}(q))^{(1)}\|^{p_1}+\|\Delta_{i,n}^{(1)}\widetilde{N}^{(1)}(q)\|^{p_1})$$ allows to treat the different components of $\Delta_{i,n}^{(1)}(\widetilde{X}-\widetilde{C})$ separately. Applying H\"older's inequality for $p'=2/(2-p_2)$, $q'=2/p_2$ and using the inequalities from Lemma \ref{elem_ineq} (note that $p_1<2-p_2$) yields
\begin{align}
&\mathbb{E}\big[ n^{(p_1+p_2)/2-1}\sum_{i,j:t_{i,n}^{(1)} \vee t_{j,n}^{(2)}\leq T}K_{p_1,p_2,\varepsilon}\|\Delta_{i,n}^{(1)}(\widetilde{B}(q)+\widetilde{M}(q))^{(1)}\|^{p_1} \nonumber
\\&~~~~~~\times (\|\Delta_{j,n}^{(2)}\widetilde{C}^{(2)}\|^{p_2}+\|\Delta_{j,n}^{(2)}(\widetilde{X}-\widetilde{C})^{(2)}\|^{p_2})\mathds{1}^{*,n}_{d_1,d_2}(i,j)
\big| \mathcal{S} \big]\nonumber
\\&~~ \leq K_{p_1,p_2,\varepsilon} n^{(p_1+p_2)/2-1}\sum_{i,j:t_{i,n}^{(1)} \vee t_{j,n}^{(2)}\leq T}\mathbb{E}[\|\Delta_{i,n}^{(1)}(\widetilde{B}(q)+\widetilde{M}(q))^{(1)}\|^{\frac{2p_1}{2-p_2}}|\mathcal{S}]^{\frac{2-p_2}{2}} \nonumber
\\&~~~~~~\times (\mathbb{E}[\|\Delta_{j,n}^{(2)}\widetilde{C}^{(2)}\|^2|\mathcal{S}]^{p_2/2}+\mathbb{E}[\|\Delta_{j,n}^{(2)}(\widetilde{X}-\widetilde{C})^{(2)}\|^2|\mathcal{S}]^{p_2/2}) \mathds{1}^{*,n}_{d_1,d_2}(i,j)\label{hoelder_trick}
\\& ~~\leq K_{p_1,p_2,\varepsilon} (K_q (|\pi_n|_T)^{p_1/2}+(e_q)^{p_1/2}) \widetilde{G}^{(d_1,d_2),n}_{p_1,p_2}(T)\nonumber
\end{align}
which vanishes as $n,q \rightarrow \infty$ for any $\varepsilon >0$ if $p_1>0$.

Finally consider
\begin{align}
&n^{(p_1+p_2)/2-1}K_{p_1,p_2,\varepsilon}\sum_{i,j:t_{i,n}^{(1)} \vee t_{j,n}^{(2)}\leq T}\|\Delta_{i,n}^{(1)}\widetilde{N}^{(1)}(q)\|^{p_1}  \nonumber
\\&~~~~~~~~~~~~~\times(\|\Delta_{j,n}^{(2)}\widetilde{C}^{(2)}\|^{p_2}+\|\Delta_{j,n}^{(2)}(\widetilde{X}-\widetilde{C})^{(2)}\|^{p_2})\mathds{1}^{*,n}_{d_1,d_2}(i,j) \nonumber
\\&~~ \leq n^{(p_1+p_2)/2-1}K_{p_1,p_2,\varepsilon}\sum_{i,j:t_{i,n}^{(1)} \vee t_{j,n}^{(2)}\leq T}\|\Delta_{i,n}^{(1)}\widetilde{N}^{(1)}(q)\|^{p_1}  \nonumber
\\&~~~~~~~~~~~~~\times(\|\Delta_{j,n}^{(2)}\widetilde{B}^{(2)}(q)\|^{p_2}+\|\Delta_{j,n}^{(2)}\widetilde{C}^{(2)}\|^{p_2}+\|\Delta_{j,n}^{(2)}\widetilde{M}^{(2)}(q)\|^{p_2})\mathds{1}^{*,n}_{d_1,d_2}(i,j)\label{lemma_norm_uncor_proof4}
\\&~~~~+n^{(p_1+p_2)/2-1}K_{p_1,p_2,\varepsilon}\sum_{i,j:t_{i,n}^{(1)} \vee t_{j,n}^{(2)}\leq T}\|\Delta_{i,n}^{(1)}\widetilde{N}^{(1)}(q)\|^{p_1}  
\|\Delta_{j,n}^{(2)}\widetilde{N}^{(2)}(q)\|^{p_2}\mathds{1}^{*,n}_{d_1,d_2}(i,j).\label{lemma_norm_uncor_proof4b}
\end{align}
\eqref{lemma_norm_uncor_proof4b} vanishes as $n \rightarrow \infty$ due to $p_1+p_2<2$ and because the finitely many jumps of $N(q)$ are asymptotically separated by the observation scheme. Further choose $\delta>0$ such that $p_1\vee 1 < 2-\delta$, $2(p_1+p_2)+(2-p_2)\delta<4$. Then the $\mathcal{S}$-conditional expectation of \eqref{lemma_norm_uncor_proof4} is by H\"older's inequality for $p'=(2-\delta)/p_1$ and $q'=p'/(p'-1)$  and inequalities \eqref{elem_ineq_B}--\eqref{elem_ineq_N} bounded by
\begin{align*}
&n^{(p_1+p_2)/2-1}K_{p_1,p_2,\varepsilon}\sum_{i,j:t_{i,n}^{(1)} \vee t_{j,n}^{(2)}\leq T} \big(\mathbb{E}[\|\Delta_{i,n}^{(1)}\widetilde{N}^{(1)}(q)\|^{2-\delta}|\mathcal{S}] \big)^{\frac{p_1}{2-\delta}}\mathds{1}^{*,n}_{d_1,d_2}(i,j)
\\&\times \big(\mathbb{E}[\|\Delta_{j,n}^{(2)}\widetilde{B}^{(2)}(q)\|^{p_2\frac{2-\delta}{2-\delta-p_1}}+\|\Delta_{j,n}^{(2)}\widetilde{C}^{(2)}\|^{p_2\frac{2-\delta}{2-\delta-p_1}}+\|\Delta_{j,n}^{(2)}\widetilde{M}^{(2)}(q)\|^{p_2\frac{2-\delta}{2-\delta-p_1}}|\mathcal{S}] \big)^{\frac{2-\delta-p_1}{2-\delta}}
\\&~\leq n^{(p_1+p_2)/2-1}K_{p_1,p_2,\varepsilon}\sum_{i,j:t_{i,n}^{(1)} \vee t_{j,n}^{(2)}\leq T} \big(|\mathcal{I}_{i,n}^{(1)}|^{\frac{p_1}{2-\delta}}+K_q |\mathcal{I}_{i,n}^{(1)}|^{p_1} \big)K_q |\mathcal{I}_{j,n}^{(2)}|^{\frac{p_2}{2}} \mathds{1}^{*,n}_{d_1,d_2}(i,j)
\\&~\leq K_{p_1,p_2,\varepsilon,q}(|\pi_n|_T)^{\frac{p_1}{2-\delta}-\frac{p_1}{2}} \widetilde{G}^{(d_1,d_2),n}_{p_1,p_2}(T)
\end{align*}
where we used
\begin{align*}
p_2q'=p_2\frac{p'}{p'-1}=p_2 \frac{2-\delta}{2-\delta-p_1}<2 \Leftrightarrow 2(p_1+p_2)+(2-p_2)\delta<4.
\end{align*}
Hence \eqref{lemma_norm_uncor_proof4} and then also \eqref{lemma_norm_uncor_proof2} vanish by Lemma 6.2 from \cite{MarVet18b} as $n\rightarrow \infty$ for any $\varepsilon$ and any $q>0$ if $p_1>0$.

\textit{Case 2.} Now we consider the case $p_1=0$. As \eqref{lemma_norm_sync_equation} is trivial for $p_1=p_2=0$ it remains to discuss $p_1=0$, $p_2>0$. In that case $${|\overline{V}^*(p_1+p_2,f,\pi_n)_T-\overline{C}^*(p_1+p_2,f,\pi_n)_T |}$$ is by \eqref{ineq_pos_hom_inc_1d} bounded by
\begin{gather*}
~n^{(p_1+p_2)/2-1}\sum_{i,j:t_{i,n}^{(1)} \vee t_{j,n}^{(2)}\leq T} \big[ \theta(\varepsilon)\|\Delta_{j,n}^{(2)}\widetilde{C}^{(2)}\|^{p_2}+K_{p_1,p_2,\varepsilon} \|\Delta_{i,n}^{(2)}(\widetilde{X}-\widetilde{C})^{(2)}\|^{p_2} \big]\mathds{1}^{*,n}_{d_1,d_2}(i,j).~
\end{gather*}
Here the first term in the sum corresponds to \eqref{lemma_norm_uncor_proof1} and the second term to \eqref{lemma_norm_uncor_proof3}. Hence the term \eqref{lemma_norm_uncor_proof2} does not have to be dealt with if $p_1=0$ because such a term simply does not occur in the upper bound in this situation.

By symmetry \eqref{lemma_norm_uncor_proof3} can be discussed in the same way as \eqref{lemma_norm_uncor_proof2} with the difference that for \eqref{lemma_norm_uncor_proof3} we have to discuss the cases $p_2>0$ and $p_2=0$ separately. Hence \eqref{lemma_norm_sync_equation} follows because we have shown that \eqref{lemma_norm_uncor_proof1}--\eqref{lemma_norm_uncor_proof3} vanish.
\end{proof}

Further we define discretizations of $\sigma$ and $C$ by 
\begin{align}\label{discret_r}
\begin{split}
&\sigma(r)_s=\sigma_{(k-1)T/2^r}, \quad s \in [(k-1)T/2^r,kT/2^r),
\\& C(r)_t=\int_0^t \sigma_s(r) ds.
\end{split}
\end{align}
Similarly we define discretizations of $\tilde{\sigma}$ and $\widetilde{C}$ for the $(d_1+d_2)$-dimensional process $\widetilde{X}$ and denote
\begin{align*}
\overline{C}^{*,r}(p,f,\pi_n)_T=n^{p/2-1}\sum_{i,j:t_{i,n}^{(1)}\vee t_{j,n}^{(2)} \leq T} f\big(\Delta_{i,n}^{(1)}\widetilde{C}^{(1)}(r),\Delta_{j,n}^{(2)}\widetilde{C}^{(2)}(r)\big) \mathds{1}^{*,n}_{d_1,d_2}(i,j).
\end{align*}

\begin{proposition}\label{lemma_norm_sync_r}
Let $f:\mathbb{R}^{d_1} \times\mathbb{R}^{d_2} \rightarrow \mathbb{R}$ be a function as in Lemma \ref{lemma_prop_pos_hom_2d} and assume that ${\widetilde{G}^{(d_1,d_2),n}_{p_1,p_2}(T)=O_{\mathbb{P}}(1)}$. Further we assume $d_l=0 \Rightarrow p_l=0$, $l=1,2$. Then
\begin{align*}
\lim_{r \rightarrow \infty} \limsup_{n \rightarrow \infty} \mathbb{P}(|\overline{C}^*(p_1+p_2,f,\pi_n)_T-\overline{C}^{*,r}(p_1+p_2,f,\pi_n)_T|>\delta )= 0
\end{align*}
holds for any $\delta >0$.
\end{proposition}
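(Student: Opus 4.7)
The plan is to adapt the strategy of the proof of Proposition~\ref{lemma_norm_sync}, with the discretization error $\widetilde{C}-\widetilde{C}(r)$ now playing the role previously played by the ``non-continuous part'' $\widetilde{X}-\widetilde{C}$. The starting point is the local Lipschitz-type inequality~\eqref{ineq_pos_hom_inc_2d} (or~\eqref{ineq_pos_hom_inc_1d} when $p_1=0$ or $p_2=0$), applied summand by summand with $x_l = \Delta_{i,n}^{(l)}\widetilde{C}^{(l)}(r)$ and $y_l = \Delta_{i,n}^{(l)}(\widetilde{C}-\widetilde{C}(r))^{(l)}$ for $l=1,2$. For any $\varepsilon>0$ this produces a pointwise upper bound on $|\overline{C}^*(p_1+p_2,f,\pi_n)_T - \overline{C}^{*,r}(p_1+p_2,f,\pi_n)_T|$ consisting of a principal $\theta(\varepsilon)$-term together with several mixed cross-terms, each containing at least one factor of the form $\|\Delta_{i,n}^{(l)}(\widetilde{C}-\widetilde{C}(r))^{(l)}\|^{p_l}$.

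For the principal term I would condition on $\mathcal{S}$ and apply Cauchy--Schwarz together with the BDG-type bound $\mathbb{E}[\|\Delta_{i,n}^{(l)}\widetilde{C}^{(l)}(r)\|^{p}\,|\,\mathcal{S}] \leq K|\mathcal{I}_{i,n}^{(l)}|^{p/2}$, which is valid uniformly in $r$ since $\tilde\sigma$ (and therefore every $\tilde\sigma(r)$) is bounded. The resulting upper bound is $K\,\theta(\varepsilon)\,\widetilde{G}^{(d_1,d_2),n}_{p_1,p_2}(T) = O_{\mathbb{P}}(\theta(\varepsilon))$ by hypothesis, and invoking Lemma 6.2 of \cite{MarVet18b} this contribution vanishes in probability as $\varepsilon \to 0$, uniformly in $n$ and $r$.

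The key ingredient for the cross-terms is the deterministic convergence $\int_0^T\|\tilde{\sigma}_s - \tilde{\sigma}(r)_s\|^{p}\,ds \to 0$ almost surely as $r \to \infty$, which follows from dominated convergence together with boundedness and the càdlàg property of $\tilde\sigma$. For $p_l \geq 2$, conditional BDG and Hölder yield
\[
\mathbb{E}\bigl[\|\Delta_{i,n}^{(l)}(\widetilde{C}-\widetilde{C}(r))^{(l)}\|^{p_l}\,\big|\,\mathcal{F}_{t_{i-1,n}^{(l)}}\bigr] \leq K_{p_l}|\mathcal{I}_{i,n}^{(l)}|^{p_l/2 - 1}\mathbb{E}\Bigl[\int_{\mathcal{I}_{i,n}^{(l)}}\|\tilde{\sigma}_s - \tilde{\sigma}(r)_s\|^{p_l}\,ds\,\Big|\,\mathcal{F}_{t_{i-1,n}^{(l)}}\Bigr],
\]
while for $p_l \in (0,2)$ a Hölder splitting analogous to~\eqref{hoelder_trick} combined with BDG reduces matters to an estimate of the same form. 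Pairing this with the conditional BDG bound on the companion factor $\|\Delta_{j,n}^{(3-l)}\widetilde{C}^{(3-l)}(r)\|^{p_{3-l}}$ (or $\|\Delta_{j,n}^{(3-l)}(\widetilde{C}-\widetilde{C}(r))^{(3-l)}\|^{p_{3-l}}$ in the pure-$y$ cross-term) via Cauchy--Schwarz shows that each cross-term's $\mathcal{S}$-conditional expectation is dominated by a constant times $\widetilde{G}^{(d_1,d_2),n}_{p_1,p_2}(T)$ times a quantity $\eta_r$ depending only on $\int_0^T\|\tilde\sigma_s - \tilde\sigma(r)_s\|^{p}ds$ that tends to zero almost surely. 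Sending first $n \to \infty$ and then $r \to \infty$, after a preliminary choice of $\varepsilon$ to absorb the principal term, closes the argument. The main obstacle is keeping the Hölder book-keeping clean in the regime $p_l \in (0,2)$ where direct BDG is unavailable and the exponents must be balanced as in~\eqref{hoelder_trick}; the degenerate cases $p_l = 0$ are treated separately via~\eqref{ineq_pos_hom_inc_1d}, which simply removes the corresponding cross-terms.
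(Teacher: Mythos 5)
Your overall skeleton is the same as the paper's: apply \eqref{ineq_pos_hom_inc_2d} (resp.\ \eqref{ineq_pos_hom_inc_1d}) summand by summand, absorb the principal term into $\theta(\varepsilon)\widetilde{G}^{(d_1,d_2),n}_{p_1,p_2}(T)$ and invoke Lemma 6.2 of \cite{MarVet18b}, and reduce the cross-terms via conditional BDG/H\"older to integrals of $\|\tilde\sigma_s-\tilde\sigma(r)_s\|$ over single observation intervals. Up to that point everything is fine. The gap is in your final claim that each cross-term's $\mathcal{S}$-conditional expectation is bounded by $K\,\widetilde{G}^{(d_1,d_2),n}_{p_1,p_2}(T)\cdot\eta_r$ with $\eta_r$ determined by $\int_0^T\|\tilde\sigma_s-\tilde\sigma(r)_s\|^p\,ds$ alone. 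After your BDG step the cross-term has the shape
\begin{align*}
n^{(p_1+p_2)/2-1}\sum_{i,j}|\mathcal{I}_{i,n}^{(l)}|^{p_l/2}\,\big(A_i(r)\big)^{p_l/2}\,|\mathcal{I}_{j,n}^{(3-l)}|^{p_{3-l}/2}\,\mathds{1}^{*,n}_{d_1,d_2}(i,j),
\qquad A_i(r)=\frac{1}{|\mathcal{I}_{i,n}^{(l)}|}\int_{\mathcal{I}_{i,n}^{(l)}}\|\tilde\sigma_s-\tilde\sigma(r)_s\|^{2}\,ds,
\end{align*}
so to factor out $\widetilde{G}^{(d_1,d_2),n}_{p_1,p_2}(T)$ you must control $\sup_i A_i(r)$, i.e.\ a supremum of \emph{local averages}. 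That supremum is dominated by $\sup_{s\le T}\|\tilde\sigma_s-\tilde\sigma(r)_s\|^2$, not by the total integral, and it does not tend to zero when $\tilde\sigma$ has jumps (only c\`adl\`ag is assumed): if $\tilde\sigma$ jumps at $\tau$, then for every $r$ and $n$ there are intervals $\mathcal{I}_{i,n}^{(l)}$ lying just after $\tau$ but inside the dyadic block containing $\tau$, on which $A_i(r)\approx\|\Delta\tilde\sigma_\tau\|^2$. Applying Cauchy--Schwarz across the double sum instead does not rescue the argument either, since one interval of one grid may intersect arbitrarily many intervals of the other grid under the standing assumptions.

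This is exactly the point where the paper's proof does more work: it first linearizes the concave power via $a^{x}\le\tilde\eta^{x}+\tilde\eta^{x-1}a$ (the step leading from \eqref{theo_norm_2d_uncor_proof1} to \eqref{theo_norm_2d_uncor_proof2}), and then splits on the event $\Omega(N,n,r,\delta)$: intervals on which $\sup_{s\in\mathcal{I}_{i,n}^{(l)}}\|\tilde\sigma_s-\tilde\sigma_s(r)\|\le\delta$ have local average at most $\delta^2$ and reassemble into $\delta^{2p_l\vee 1}\widetilde{G}^{(d_1,d_2),n}_{p_1,p_2}(T)$, while the remaining intervals number at most $N$ (one per large jump of $\tilde\sigma$) and contribute at most $N$ times the modulus of continuity of $\widetilde{G}^{(d_1,d_2),n}_{p_1,p_2}$ over windows of length $2^{-r}+|\pi_n|_T$. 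Some version of this small-oscillation/few-big-jumps dichotomy is indispensable; dominated convergence of the time integral alone cannot close the argument. (Your argument would be complete as written only under the additional hypothesis that $\tilde\sigma$ is continuous.)
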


\begin{proof}
We obtain using inequality \eqref{ineq_pos_hom_inc_2d}
\begin{align}\label{theo_norm_2d_uncor_proof1}
&\mathbb{E}\big[|\overline{C}^*(p_1+p_2,f,\pi_n)_T-\overline{C}^{*,r}(p_1+p_2,f,\pi_n)_T|\big| \mathcal{S} \big] \nonumber
\\ &~~\leq n^{(p_1+p_2)/2-1} \sum_{i,j:t_{i,n}^{(1)} \vee t_{j,n}^{(2)} \leq T}
 \mathbb{E}\big[|f(\Delta_{i,n}^{(1)}\widetilde{C}^{(1)},\Delta_{j,n}^{(2)}\widetilde{C}^{(2)}) \nonumber
 \\&~~~~~~~~~~~~~~~~~~~~~~~~~~~~~~~~~~ -f(\Delta_{i,n}^{(1)}\widetilde{C}^{(1)}(r),\Delta_{j,n}^{(2)}\widetilde{C}^{(2)}(r)) |\big|\mathcal{S} \big] \mathds{1}^{*,n}_{d_1,d_2}(i,j)\nonumber
 \\ &~~\leq  n^{(p_1+p_2)/2-1} \sum_{i,j:t_{i,n}^{(1)} \vee t_{j,n}^{(2)} \leq T} \mathds{1}^{*,n}_{d_1,d_2}(i,j)\Big(
 \mathbb{E}\big[\theta(\varepsilon)\|\Delta_{i,n}^{(1)}\widetilde{C}^{(1)}\|^{p_1}|\Delta_{j,n}^{(2)}\widetilde{C}^{(2)}\|^{p_2}\big|\mathcal{S} \big] \nonumber
 \\&~~~~~~~~~~~~+K_{\varepsilon}\mathbb{E}\big[\|\Delta_{i,n}^{(1)}(\widetilde{C}-\widetilde{C}(r))^{(1)}\|^{p_1}(\|\Delta_{j,n}^{(2)}\widetilde{C}^{(2)}\|^{p_2}+\|\Delta_{j,n}^{(2)}\widetilde{C}^{(2)}(r)\|^{p_2}) \big|\mathcal{S} \big]\nonumber
 \\&~~~~~~~~~~~~+K_{\varepsilon}\mathbb{E}\big[\|\Delta_{j,n}^{(2)}(\widetilde{C}-\widetilde{C}(r))^{(2)}\|^{p_2}(\|\Delta_{i,n}^{(1)}\widetilde{C}^{(1)}\|^{p_1}+\|\Delta_{i,n}^{(1)}\widetilde{C}^{(1)}(r)\|^{p_1}) \big|\mathcal{S} \big] \Big)\nonumber
 \\&~~ \leq \theta(\varepsilon)\widetilde{G}^{(d_1,d_2),n}_{p_1,p_2}(T) \nonumber+K_{\varepsilon}n^{(p_1+p_2)/2-1}\sum_{l=1,2} \sum_{i,j:t_{i,n}^{(l)} \vee t_{j,n}^{(3-l)} \leq T}
 \\&~~~~~~ \mathbb{E}\big[\big(\int_{t_{i-1,n}^{(l)}}^{t_{i,n}^{(l)}} \|\tilde{\sigma}_s-\tilde{\sigma}_s(r)\|^2ds \big)^{p_l \vee \frac{1}{2}}\big|\mathcal{S} \big]^{p_l \wedge \frac{1}{2}} K |\mathcal{I}_{j,n}^{(3-l)}|^{\frac{p_{3-l}}{2}} \mathds{1}^{*,n}_{d_1,d_2}(i,j)
\end{align}
where we applied the Cauchy-Schwarz inequality, inequality \eqref{elem_ineq_B} and (2.1.34) from \cite{JacPro12} together with Jensen's inequality for $p_l <1/2$. Using the trivial inequality $a^{x} \leq \tilde{\eta}^{x}+ \tilde{\eta}^{{x}-1}a$ which holds for any $\tilde{\eta},a>0$ and ${x} \in [0,1]$ for $x=p_l \wedge (1/2) \leq 1$, 
\begin{align*}
\tilde{\eta}=(\eta |\mathcal{I}_{i,n}^{(l)}|)^{p_l/(2 p_l \wedge 1)}, \quad a=\mathbb{E} \big[\big(\int_{t_{i-1,n}^{(l)}}^{t_{i,n}^{(l)}} \|\tilde{\sigma}_s-\tilde{\sigma}_s(r)\|^2ds \big)^{p_l \vee \frac{1}{2}} \big| \mathcal{S} \big]
\end{align*}
yields that \eqref{theo_norm_2d_uncor_proof1} is bounded by (note $\frac{(p_l \wedge (1/2) -1)p_l}{2 p_l \wedge 1}+(p_l \vee \frac{1}{2})=\frac{p_l}{2}$)
\begin{align}\label{theo_norm_2d_uncor_proof2}
&(\theta(\varepsilon)+K_{\varepsilon} \eta^{p_l/2} )\widetilde{G}^{(d_1,d_2),n}_{p_1,p_2}(T)
+K_{\varepsilon,\eta}n^{(p_1+p_2)/2-1}\sum_{l=1,2} \sum_{i,j:t_{i,n}^{(l)} \vee t_{j,n}^{(3-l)} \leq T}|\mathcal{I}_{i,n}^{(l)}|^{\frac{(p_l \wedge (1/2) -1)p_l}{2 p_l \wedge 1}} \nonumber
 \\&~~~~~~ 
\times
 \mathbb{E}\big[\big(\int_{t_{i-1,n}^{(l)}}^{t_{i,n}^{(l)}} \|\tilde{\sigma}_s-\tilde{\sigma}_s(r)\|^2ds \big)^{p_l \vee \frac{1}{2}}\big|\mathcal{S} \big] |\mathcal{I}_{j,n}^{(3-l)}|^{\frac{p_{3-l}}{2}} \mathds{1}^{*,n}_{d_1,d_2}(i,j)\nonumber
 \\&~ \leq (\theta(\varepsilon)+K_{\varepsilon} \eta^{p_l/2}+K_{\varepsilon,\eta}(\delta^{2p_1 \vee 1}+\delta^{2p_2 \vee 1}) )\widetilde{G}^{(d_1,d_2),n}_{p_1,p_2}(T)+K_{\varepsilon,\eta}n^{(p_1+p_2)/2-1}\nonumber
 \\&~~~~~\times \sum_{l=1,2} \mathbb{E}\Big[\sum_{i,j:t_{i,n}^{(l)} \vee t_{j,n}^{(3-l)} \leq T}|\mathcal{I}_{i,n}^{(l)}|^{\frac{(p_l \wedge (1/2) -1)p_l}{2 p_l \wedge 1}} |\mathcal{I}_{j,n}^{(3-l)}|^{\frac{p_{3-l}}{2}} \mathds{1}^{*,n}_{d_1,d_2}(i,j) \nonumber
 \\&~~~~~~~~~~~ 
\times
\big(\int_{t_{i-1,n}^{(l)}}^{t_{i,n}^{(l)}} \|\tilde{\sigma}_s-\tilde{\sigma}_s(r)\|^2ds \big)^{p_l \vee \frac{1}{2}}\mathds{1}_{\{\sup_{s \in (t_{i-1,n}^{(l)},t_{i,n}^{(l)}]}\|\tilde{\sigma}_s-\tilde{\sigma}_s(r)\|>\delta \}}\Big|\mathcal{S} \Big]. 
\end{align}
Denote by $\Omega(N,n,r,\delta)$ the set where $\tilde{\sigma}$ has at most $N$ jump times $S_p$ in $[0,T]$ with $\|\Delta \tilde{\sigma}_{S_p}\|>\delta/2$, two different such jumps are further apart than $|\pi_n|_T$ and ${\|\tilde{\sigma}_t-\tilde{\sigma}_s\|\leq \delta}$ for all $s,t \in [0,T]$ with $s<t$, $|t-s|<2^{-r}+|\pi_n|_T$ and $\nexists p:S_p \in [t,s]$. Then $\mathbb{P}(\Omega(N,n,r,\delta))\rightarrow 1$ as $N,n,r \rightarrow \infty$ for any $\delta>0$ because $\sigma$ is c\`adl\`ag. Using the assumption that $\tilde{\sigma}$ is bounded we get that \eqref{theo_norm_2d_uncor_proof2} is less or equal than
\begin{align*}
&\big(\theta(\varepsilon)+K_{\varepsilon} \eta^{p_l/2}+K_{\varepsilon,\eta}[\delta^{2p_1 \vee 1}+\delta^{2p_2 \vee 1}+\mathbb{P}((\Omega(N,n,r,\delta))^c|\mathcal{S})] \big)\widetilde{G}^{(d_1,d_2),n}_{p_1,p_2}(T)
\\&~~~
+K_{\varepsilon,\eta} N \sup_{0\leq s < t,|t-s|\leq 2^{-r}+|\pi_n|_T} \big(\widetilde{G}^{(d_1,d_2),n}_{p_1,p_2}(t)-\widetilde{G}^{(d_1,d_2),n}_{p_1,p_2}(s)  \big)
\end{align*}
which yields 
\begin{multline*}
\lim_{\varepsilon \rightarrow 0} \limsup_{\eta \rightarrow 0}\limsup_{\delta \rightarrow 0}\limsup_{N \rightarrow \infty} \\ \limsup_{r \rightarrow \infty} \limsup_{n \rightarrow \infty} \mathbb{P}(|\overline{C}^{*}(p_1+p_2,f,\pi_n)_T-\overline{C}^{*,r}(p_1+p_2,f,\pi_n)_T|>\varepsilon')=0 
\end{multline*}
for all $\varepsilon'>0$.
\end{proof}

\begin{proof}[Proof of Theorem \ref{theo_norm_2d_sync}]
By Proposition \ref{lemma_norm_sync} it suffices to prove Theorem \ref{theo_norm_2d_sync} only in the case $X_t=C_t$. 

We consider the discretization \eqref{discret_r} and denote $c_s(r)=\sigma_s(r)\sigma_s(r)^*$. Setting
\begin{align*}
&R_n=\overline{C}(p,f,\pi_n)_T, 
\\&R=\int_0^T m_{c_s}(f) d{G_p}(s),
\\&R_n(r)=n^{p/2-1}\sum_{i:t_{i,n}\leq T} f\big(\Delta_{i,n}C(r)\big), 
\\&R(r)=\int_0^T m_{c_s(r)}(f) d{G_p}(s),
\end{align*}
we will prove
\begin{align}\label{theo_norm_2d_sync_proof1}
\lim_{r \rightarrow \infty} \limsup_{n \rightarrow \infty} \mathbb{P} \big( \big|R-R(r) \big|+\big| R(r)-R_n(r)\big|+\big|R_n(r)-R_n \big|>\delta  \big) = 0~~ \forall \delta>0.
\end{align}
\textit{Step 1.} As $c_s$ is c\`adl\`ag and $G_p(s)$ is continuous it holds
\begin{align*}
R=\int_0^T m_{c_{s-}}(f) d{G_p}(s).
\end{align*}
Deonte by $\Phi_{0,I_2}$ the distribution function of a two-dimensional standard normal random variable. Further consider a function $\psi:[0,\infty) \rightarrow[0,1]$ as in the proof of Theorem \ref{general_22} with $\mathds{1}_{[1,\infty)}(x)\leq\psi(x) \leq \mathds{1}_{[1/2,\infty)}(x)$ and define $\psi_A(x)=\psi(x/A)$ and $\psi'_A=1-\psi_A$ for $A>0$. Note that
\begin{align}\label{theo_norm_2d_sync_proof2}
|R-R(r)|& =\Big|  \int_0^T \int_{\mathbb{R}^2} (f(\sigma_{s-} x)-f(\sigma_s(r)x))\Phi_{0,I_2}(dx) d{G_p}(s)\Big| \nonumber
\\ & \leq   \int_0^T \int_{\mathbb{R}^2}|(f  \psi_A)(\sigma_{s-} x)-(f  \psi_A)(\sigma_s(r)x)|\Phi_{0,I_2}(dx)d{G_p}(s)   \nonumber
\\&~~~+\int_0^T \int_{\mathbb{R}^2}|(f  \psi_A')(\sigma_{s-} x)-(f  \psi_A')(\sigma_s(r)x)|\Phi_{0,I_2}(dx)d{G_p}(s).
\end{align}
By \eqref{ineq_pos_hom_abs_2d} for $p_1=p$, $p_2=0$ we obtain $|(f \psi_A')(x)| \leq K A^p $ and hence $(f \psi_A')$ is bounded. Then the fact that $(f \psi_A')$ is continuous together with the pointwise convergence $\sigma_s(r) \rightarrow \sigma_{s-}$ yields by dominated convergence that the second summand in \eqref{theo_norm_2d_sync_proof2} vanishes for any $A>0$ as $r \rightarrow \infty$.

The first summand in \eqref{theo_norm_2d_sync_proof2} is bounded by
\begin{align}\label{theo_norm_2d_sync_proof3}
K\int_0^T \int_{\mathbb{R}^2}\big(\|\sigma_{s-}x\|^p \mathds{1}_{\{\|\sigma_{s-} x\| \geq A/2\}}+\|\sigma_{s}(r)x\|^p \mathds{1}_{\{\|\sigma_{s}(r) x\| \geq A/2\}}\big)\Phi_{0,I_2}(dx)d{G_p}(s)
\end{align}
where the inner integral is increasing in $\sigma_s$. As we assume that $\sigma$ is bounded on $[0,T]$ this yields that there exists a constant $$K'=\esssup_{s \in [0,T],\omega \in \Omega} (|\sigma_s^{(1)}(\omega)|+|\sigma_s^{(2)}(\omega)|)$$ such that \eqref{theo_norm_2d_sync_proof3} is bounded by
\begin{align*}
K\int_{\mathbb{R}^2}\big(\|K'x\|^p \mathds{1}_{\{\|K' x\| \geq A/2\}}+\|K'x\|^p \mathds{1}_{\{\|K' x\| \geq A/2\}}\big)\Phi_{0,I_2}(dx) \int_0^T d{G_p}(s)
\end{align*}
which vanishes as $A \rightarrow \infty$. Hence we have shown $|R-R(r)|\rightarrow 0$ almost surely as $r \rightarrow \infty$. 

\textit{Step 2.} In order to prove $|R(r)-R_n(r)| \overset{\mathbb{P}}{\longrightarrow} 0$ as $n \rightarrow \infty$ we apply Lemma 2.2.12 in \cite{JacPro12} with 
\begin{align*}
\xi_k^n=n^{p/2-1}\sum_{i \in L(n,k,T) } f(\Delta_{i,n}C(r) )
, 
\end{align*}
$L(n,k,T)=\{i:t_{i-1,n}   \in  [(k-1)T/2^{r_n},kT/2^{r_n})\}$, $k=1,2,\ldots,2^{r_n}$, and $\mathcal{G}_k^n=\sigma\big(\mathcal{F}_{(k-1)T/2^{r_n}} \cup \mathcal{S}\big)$. Here, $r_n$ is a sequence of real numbers with $r_n \geq r$, $r_n \rightarrow \infty$ and
\begin{align}\label{r_n}
\begin{split}
&2^{r_n} \sup_{s \in [0,T]} |G_p(s)-G_p^{(l),n}(s)|=o_{\mathbb{P}}(1), 
\\&2^{r_n}\sup_{s,t \in [0,T],|t-s|\leq |\pi_n|_T} |G_p^{(l),n}(t)-G_p^{(l),n}(s)|=o_{\mathbb{P}}(1). 
\end{split}
\end{align}
Such a sequence exists, because $G_p^{(l),n}$ and hence $G_p$ are nondecreasing functions such that pointwise convergence implies uniform convergence on $[0,T]$ to the continuous function $G_p$. We then get
\begin{align}\label{O_p_ntimespi_n}
&\mathbb{E}\big[\xi_k^n \big| \mathcal{G}_{k-1}^n  \big]=
n^{p/2-1}\sum_{i \in L(n,k,T) } \mathbb{E}\big[f(\Delta_{i,n} C(r))\big| \mathcal{G}_{k-1}^n  \big] \nonumber
\\&=n^{p/2-1} \sum_{i \in L(n,k,T) } |\mathcal{I}_{i,n}|^{p/2} \mathbb{E}\big[f(|\mathcal{I}_{i,n}|^{-1/2}\Delta_{i,n} C(r) )\big| \mathcal{G}_{k-1}^n  \big]\nonumber
\\&=n^{p/2-1} m_{c_{(k-1)T/2^{r_n}}(r)}(f) \sum_{i :\mathcal{I}_{i,n} \subset ((k-1)T/2^{r_n},kT/2^{r_n}] } |\mathcal{I}_{i,n}|^{p/2} \nonumber
\\&~~~~+K\sup_{s,t \in [0,T],|t-s|\leq |\pi_n|_T} |G_p^{(l),n}(t)-G_p^{(l),n}(s)|\nonumber
\\&=m_{c_{(k-1)T/2^{r_n}}(r)}(f)\big( G_p^{(l),n}(kT/2^{r_n})-G_p^{(l),n}((k-1)T/2^{r_n}) \big) \nonumber
\\&~~~~+K\sup_{s,t \in [0,T],|t-s|\leq |\pi_n|_T} |G_p^{(l),n}(t)-G_p^{(l),n}(s)|
\end{align}
because $|\mathcal{I}_{i,n}|^{-1/2}\Delta_{i,n} C(r) $ is conditional on $\mathcal{G}_{k-1}^n$ centered normal distributed with covariance matrix $c_{(k-1)T/2^{r_n}}(r)$. The term $$K\sup_{s,t \in [0,T],|t-s|\leq |\pi_n|_T} |G_p^{(l),n}(t)-G_p^{(l),n}(s)|$$ is due to the summand with ${kT/2^{r_n}\in \mathcal{I}_{i,n}}$ which has to be treated separately as in the corresponding interval the process $\sigma(r)$ might jump. Further as in Step 1 the boundedness of $\sigma_s$ together with $|f(x)|\leq K \|x \|^p$ yields that $m_{c_s}(f)$ is also bounded which together with the previous computations yields
\begin{multline*}
\big|R(r)-\sum_{k=1}^{2^{r_n}} \mathbb{E}[\xi_k^n | \mathcal{G}_{k-1}^n  ]\big| \\
\leq K2^{r_n}\sup_{s \in [0,T]} |G_p(s)-G_p^n(s)|+K2^{r_n}\sup_{s,t \in [0,T],|t-s|\leq |\pi_n|_T} |G_p^{(l),n}(t)-G_p^{(l),n}(s)|
\end{multline*}
where the right hand side is $o_{\mathbb{P}}(1)$ by \eqref{r_n}. Hence the sum over the $\mathbb{E}[\xi_k^n \big| \mathcal{G}_{k-1}^n  ]$ converges in probability to $R(r)$.

Using the Cauchy-Schwarz inequality, inequality \eqref{elem_ineq_C}, the definition of $G_p^n$ and telescoping sums we also get
\begin{align*}
&\sum_{k=1}^{2^{r_n}}\mathbb{E}[|\xi_k^n|^2 | \mathcal{G}_{k-1}^n  ] \leq  \sum_{k=1}^{2^{r_n}} \mathbb{E}\big[\big( n^{p/2-1}\sum_{i \in L(n,k,T) }
K \|\Delta_{i,n} C(r) \|^p  \big)^2\big| \mathcal{G}_{k-1}^n  \big]
\\&~~=K \sum_{k=1}^{2^{r_n}}n^{p-2}\sum_{i \in L(n,k,T) }\sum_{j \in L(n,k,T) }\mathbb{E}\big[ \|\Delta_{i,n} C(r) \|^p \|\Delta_{j,n} C(r) \|^p \big| \mathcal{G}_{k-1}^n  \big]
\\&~~\leq K \sum_{k=1}^{2^{r_n}}n^{p-2}\sum_{i \in L(n,k,T) }\sum_{j \in L(n,k,T) }\big(\prod_{m=i,j} \mathbb{E}\big[ \|\Delta_{m,n} C(r) \|^{2p}\big| \mathcal{G}_{k-1}^n  \big]\big)^{1/2}
\\&~~\leq K \sum_{k=1}^{2^{r_n}}n^{p-2}\sum_{i \in L(n,k,T) }\sum_{j \in L(n,k,T) }|\mathcal{I}_{i,n}|^{p/2}|\mathcal{I}_{j,n}|^{p/2}
\\&~~\leq K \sum_{k=1}^{2^{r_n}} \big(n^{p/2-1}\sum_{i \in L(n,k,T) }|\mathcal{I}_{i,n}|^{p/2}\big)^2
\\ &~~\leq K G_p^n(T) \sup_{u,s \in [0,T], |u-s|\leq T 2^{-r_n}+|\pi_n|_T}\big|G_p^{(l),n}(u)-G_p^{(l),n}(s)\big|
\end{align*}
where the right hand side converges to zero in probability, since $G_p^n$ converges uniformly to a continuous function $G_p$. Hence we have shown 
$$
\sum_{k=1}^{2^{r_n}} \mathbb{E}[\xi_k^n | \mathcal{G}_{k-1}^n  ] \overset{\mathbb{P}}{\longrightarrow} R(r),~~~\sum_{k=1}^{2^{r_n}}\mathbb{E}[|\xi_k^n|^2 | \mathcal{G}_{k-1}^n  ] \overset{\mathbb{P}}{\longrightarrow} 0.
$$
Further the $\xi_k^n$ are $\mathcal{G}_k^n$-measurable and hence $\xi_k^n-\mathbb{E}[\xi_k^n|G_{k-1}^n]$ are martingale differences. Lemma 2.2.12 from \cite{JacPro12} then yields
$$
R_n(r)=\sum_{k=1}^{2^{r_n}} \xi_k^n  \overset{\mathbb{P}}{\longrightarrow} R(r)
$$
for any $r \in \mathbb{N}$.

\textit{Step 3.}
Finally we obtain
\begin{align*}
\lim_{r \rightarrow \infty} \limsup_{n \rightarrow \infty} \mathbb{P}(|R_n-R_n(r)|>\delta)=0
\end{align*}
for any $\delta >0$ from Proposition \ref{lemma_norm_sync_r} with $d_1=2$, $d_2=0$, $p_1=p$ and $p_2=0$.
\end{proof}

\begin{proof}[Proof of Corollary \ref{theo_norm_1d}]
This is Theorem \ref{theo_norm_2d_sync} with the function $$f(x_1,x_2)=\text{sgn}(g(x_1))|g(x_1)g(x_2)|^{1/2}$$ applied to the process $\widetilde{X}_t=(X^{(l)}_t,X^{(l)}_t)^*$. Note that any positively homogeneous function $g$ in dimension $1$ is continuous because it holds $$g(x)=|x|g(1)\mathds{1}_{\{x>0\}}+|x|g(-1)\mathds{1}_{\{x<0\}}.$$ 
\end{proof}

\begin{proof}[Proof of Theorem \ref{theo_norm_2d_uncor}] By Proposition \ref{lemma_norm_sync} it suffices to prove Theorem \ref{theo_norm_2d_uncor} if ${X_t=C_t}$. We will proceed as in the proof of Theorem \ref{theo_norm_2d_sync} and define
\begin{align*}
&R_n=\overline{C}(p_1+p_2,f,\pi_n)_T, 
\\&R=m_{I_2}(f)\int_0^T (\sigma_s^{(1)})^{p_1} (\sigma_s^{(2)})^{p_2} d{G_{(p_1,p_2)}}(s),
\\&R_n(r)=n^{(p_1+p_2)/2-1}\sum_{i,j:t_{i,n}^{(1)}\vee t_{j,n}^{(2)} \leq T} f\big(\Delta_{i,n}^{(1)}C^{(1)}(r),\Delta_{j,n}^{(2)}C^{(2)}(r)\big) \mathds{1}_{\{\mathcal{I}_{j,n}^{(2)} \cap \mathcal{I}_{j,n}^{(2)} \neq \emptyset\}}, 
\\&R(r)=m_{I_2}(f)\int_0^T (\sigma_s^{(1)}(r))^{p_1} (\sigma_s^{(2)}(r))^{p_2} d{G_{(p_1,p_2)}}(s).
\end{align*}

\textit{Step 1.} As $\sigma$ is c\`adl\`ag we obtain $\sigma_s(r) \rightarrow \sigma_{s-}$ pointwise for $r \rightarrow \infty$. As $\sigma$ is further bounded we derive by dominated convergence
\begin{align*}
R(r) \rightarrow m_{I_2}(f)\int_0^T (\sigma_{s-}^{(1)})^{p_1} (\sigma_{s-}^{(2)})^{p_2} d{G_{(p_1,p_2)}}(s)
\end{align*}
where the right hand side equals $R$ because $G_{p_1,p_2}$ is continuous.

\textit{Step 2.} Comparing Step 2 in the proof of Theorem \ref{theo_norm_2d_sync} we define
\begin{align*}
\xi_k^n=n^{(p_1+p_2)/2-1}\sum_{(i,j) \in L(n,k,T) } f(\Delta_{i,n}^{(1)} C^{(1)}(r) ,\Delta_{j,n}^{(2)} C^{(2)}(r) )
\mathds{1}_{\{\mathcal{I}_{i,n}^{(1)} \cap\mathcal{I}_{j,n}^{(2)} \neq \emptyset\}}
\end{align*}
where the sequence $r_n$ fulfills similar properties as in the proof of Theorem \ref{theo_norm_2d_uncor} and ${L(n,k,T)=\{(i,j):t_{i-1,n}^{(1)} \wedge t_{j-1,n}^{(2)}  \in  [(k-1)T/2^{r_n},kT/2^{r_n})\}}$, $k=1,\ldots, 2^{r_n}$. Then we obtain the identity
\begin{align*}
\mathbb{E}[\xi_k^n|\mathcal{G}_{k-1}^n]
&=n^{(p_1+p_2)/2-1}\sum_{(i,j) \in L(n,k,T) } (\sigma_{(k-1)T/2^{r_n}}^{(1)}(r))^{p_1} (\sigma_{(k-1)T/2^{r_n}}^{(2)}(r))^{p_2}
\\&~~~~~~~~~~~~~~~~~~~~~~~~~~~~~~\times|\mathcal{I}_{i,n}^{(1)}|^{p_1/2} |\mathcal{I}_{j,n}^{(2)}|^{p_2/2} m_{I_2}(f)
\mathds{1}_{\{\mathcal{I}_{i,n}^{(1)} \cap\mathcal{I}_{j,n}^{(2)} \neq \emptyset\}}
\\&~~~~+ K\sup_{s,t \in [0,T],|t-s|\leq 3|\pi_n|_T} |G_{p_1,p_2}^{n}(t)-G_{p_1,p_2}^{n}(s)|
\end{align*}
were we used that $f$ is positively homogeneous in both arguments and that
\begin{align*}
\big(\Delta_{i,n}^{(1)} C^{(1)}(r)/(\sigma_{(k-1)T/2^{r_n}}^{(1)}(r)|\mathcal{I}_{i,n}^{(1)}|^{1/2}),\Delta_{j,n}^{(2)} C^{(2)}(r)/(\sigma_{(k-1)T/2^{r_n}}^{(2)}(r)|\mathcal{I}_{j,n}^{(2)}|^{1/2})\big)
\end{align*}
is for $\mathcal{I}^{(1)}_{i,n} \cup \mathcal{I}_{j,n}^{(2)}\subset [(k-1)T/2^{r_n},kT/2^{r_n})$ conditionally on $\mathcal{G}_{k-1}^n$ standard normally distributed due to $\rho \equiv 0$. The term $$K\sup_{s,t \in [0,T],|t-s|\leq 3|\pi_n|_T} |G_{p_1,p_2}^{n}(t)-G_{p_1,p_2}^{n}(s)|$$ originates similarly as before from summands with $kT/2^{r_n}\in \mathcal{I}^{(1)}_{i,n} \cup \mathcal{I}_{j,n}^{(2)}$ which have to be treated separately as in the corresponding intervals the process $\sigma(r)$ might jump.

This yields
\begin{multline*}
\big|R(r)-\sum_{k=1}^{2^{r_n}} \mathbb{E}[\xi_k^n | \mathcal{G}_{k-1}^n  ]\big|
 \leq K2^{r_n}\sup_{s \in [0,T]} |G_{p_1,p_2}(s)-G_{p_1,p_2}^n(s)|\\+K2^{r_n}\sup_{s,t \in [0,T],|t-s|\leq 3|\pi_n|_T} |G_{p_1,p_2}^{n}(t)-G_{p_1,p_2}^{n}(s)|
\end{multline*}
and we get as in Step 2 in the proof of Theorem \ref{theo_norm_2d_sync}
\begin{align*}
&\sum_{k=1}^{2^{r_n}}\mathbb{E}[|\xi_k^n|^2 | \mathcal{G}_{k-1}^n  ] 
\\&~\leq \sum_{k=1}^{2^{r_n}} \mathbb{E}\big[  \big(n^{(p_1+p_2)/2-1}
\sum_{(i,j) \in L(n,k,T)} K |\Delta_{i,n}^{(1)} C^{(1)}(r)|^{p_1}|\Delta_{j,n}^{(2)} C^{(2)}(r)|^{p_2} \mathds{1}_{\{\mathcal{I}_{i,n}^{(1)} \cap \mathcal{I}_{j,n}^{(2)} \neq \emptyset\}}\big)^2
\big| \mathcal{G}_{k-1}^n  \big] 
\\&~\leq 
 K \sum_{k=1}^{2^{r_n}} n^{p_1+p_2-2}\sum_{(i,j),(i',j') \in L(n,k,T)}\big(\mathbb{E}[|\Delta_{i,n}^{(1)}C^{(1)}(r)|^{4p_1}|\mathcal{G}_{k-1}^n ]\mathbb{E}[|\Delta_{j,n}^{(2)}C^{(2)}(r)|^{4p_1}|\mathcal{G}_{k-1}^n ]\big)^{1/4}
\\&~~~~\times \big(\mathbb{E}[|\Delta_{i',n}^{(1)}C^{(1)}(r)|^{4p_1}|\mathcal{G}_{k-1}^n ]\mathbb{E}[|\Delta_{j',n}^{(2)}C^{(2)}(r)|^{4p_1}|\mathcal{G}_{k-1}^n ]\big)^{1/4}
 \mathds{1}_{\{\mathcal{I}_{i,n}^{(1)} \cap \mathcal{I}_{j,n}^{(2)} \neq \emptyset\}}\mathds{1}_{\{\mathcal{I}_{i',n}^{(1)} \cap \mathcal{I}_{j',n}^{(2)} \neq \emptyset\}}
 \\&~\leq 
 K \sum_{k=1}^{2^{r_n}} n^{p_1+p_2-2}\sum_{(i,j),(i',j') \in L(n,k,T)}
|\mathcal{I}_{i,n}^{(1)}|^{p_1/2} |\mathcal{I}_{j,n}^{(2)}|^{p_2/2}
 |\mathcal{I}_{i',n}^{(1)}|^{p_1/2} |\mathcal{I}_{j',n}^{(2)}|^{p_2/2}
 \\&~~~~~~~~~~~~~~~~~~~~~~~~~~~~~~~~~~~~~~~~~~~~~~~~~~~~~~~~~~~~\times \mathds{1}_{\{\mathcal{I}_{i,n}^{(1)} \cap \mathcal{I}_{j,n}^{(2)} \neq \emptyset\}}\mathds{1}_{\{\mathcal{I}_{i',n}^{(1)} \cap \mathcal{I}_{j',n}^{(2)} \neq \emptyset\}}
 \\&~=K \sum_{k=1}^{2^{r_n}} \big(n^{(p_1+p_2)/2-1}\sum_{(i,j) \in L(n,k,T)}
|\mathcal{I}_{i,n}^{(1)}|^{p_1/2} |\mathcal{I}_{j,n}^{(2)}|^{p_2/2}
 \mathds{1}_{\{\mathcal{I}_{i,n}^{(1)} \cap \mathcal{I}_{j,n}^{(2)} \neq \emptyset\}}\big)^2
\\&~ \leq K G_{p_1,p_2}^n(T) \sup_{u,s \in [0,T], |u-s|\leq T 2^{-r_n}+|\pi_n|_T}\big|G_{p_1,p_2}^n(u)-G_{p_1,p_2}^n(s)\big|
\end{align*}
which then yields $R_n(r) \overset{\mathbb{P}}{\longrightarrow} R(r)$ as $n \rightarrow \infty$ by Lemma 2.2.12 in \cite{JacPro12}.

\textit{Step 3.} Finally we obtain
\begin{align*}
\lim_{r \rightarrow \infty} \limsup_{n \rightarrow \infty} \mathbb{P}(|R_n-R_n(r)|>\delta)=0
\end{align*}
for any $\delta >0$ from Proposition \ref{lemma_norm_sync_r} with $d_1=d_2=1$.
\end{proof}

\begin{proof}[Proof of Theorem \ref{theo_norm_2d_integer}]
Proposition \ref{lemma_norm_sync} yields that it suffices to prove Theorem \ref{theo_norm_2d_integer} in the case $X_t=C_t$. Further, following the proof of Theorem \ref{theo_norm_2d_uncor} we observe that Step 1 and Step 3 do not make use of the assumption $\rho=0$.  Hence the arguments therein also apply here. The only difference occurs if we want to adapt Step 2 in the proof of Theorem \ref{theo_norm_2d_uncor} for the proof of Theorem \ref{theo_norm_2d_integer}. In fact, if we look at 
\begin{align*}
\xi_k^n=n^{(p_1+p_2)/2-1}\sum_{(i,j) \in L(n,k,T)} (\Delta_{i,n}^{(1)}C^{(1)}(r))^{p_1}(\Delta_{j,n}^{(2)}C^{(2)}(r))^{p_2} \mathds{1}_{\{\mathcal{I}_{i,n}^{(1)}\cap \mathcal{I}_{j,n}^{(2)} \neq \emptyset\}}
\end{align*}
and denote by $\Phi_{0,I_4}$ the distribution function of a four-dimensional standard normal random variable we get 
\begin{align*}
&\mathbb{E}[\xi_k^n|\mathcal{G}_{k-1}^n]=n^{(p_1+p_2)/2-1} \sum_{(i,j) \in L(n,k,T)}  \mathbb{E}[(\Delta_{i,n}^{(1)}C^{(1)}(r))^{p_1}(\Delta_{j,n}^{(2)}C^{(2)}(r))^{p_2}|\mathcal{G}_k^n] \mathds{1}_{\{\mathcal{I}_{i,n}^{(1)}\cap \mathcal{I}_{j,n}^{(2)} \neq \emptyset\}}
\\&~~=\prod_{l=1,2}(\sigma_{(k-1)T/2^{r_n}}^{(l)})^{p_l}\sum_{(i,j) \in L(n,k,T)} 
\int_{\mathbb{R}^4}\Big(|\mathcal{I}_{i,n}^{(1)}\setminus \mathcal{I}_{j,n}^{(2)}|^{1/2}x_1+|\mathcal{I}_{i,n}^{(1)}\cap \mathcal{I}_{j,n}^{(2)}|^{1/2}x_2 \Big)^{p_1}
\\&~~~~~~~~\times \Big(\rho_{(k-1)T/2^{r_n}}|\mathcal{I}_{i,n}^{(1)}\cap \mathcal{I}_{j,n}^{(2)}|^{1/2} x_2+(1-(\rho_{(k-1)T/2^{r_n}})^2)^{1/2}|\mathcal{I}_{i,n}^{(1)}\cap \mathcal{I}_{j,n}^{(2)}|^{1/2}x_3
\\& ~~~~~~~~~~~~+|\mathcal{I}_{j,n}^{(2)} \setminus \mathcal{I}_{i,n}^{(1)}|^{1/2} x_4\Big)^{p_2}\Phi_{0,I_4}(dx) \mathds{1}_{\{\mathcal{I}_{i,n}^{(1)}\cap \mathcal{I}_{j,n}^{(2)} \neq \emptyset\}}
\\&~~~~~~~~~~+  K\sup_{s,t \in [0,T],|t-s|\leq 3|\pi_n|_T} |G_{p_1,p_2}^{n}(t)-G_{p_1,p_2}^{n}(s)|
\\&~~=(\sigma_{(k-1)T/2^{r_n}}^{(1)})^{p_1}(\sigma_{(k-1)T/2^{r_n}}^{(2)})^{p_2}\sum_{(i,j) \in L(n,k,T)} \sum_{k=0}^{p_1} \sum_{m,l=0}^{p_2} \binom{p_1}{k}\binom{p_2}{l,m}
\\&~~~~~~ \times \int_{\mathbb{R}^4}|\mathcal{I}_{i,n}^{(1)}\setminus \mathcal{I}_{j,n}^{(2)}|^{k/2} (x_1)^k (\rho_{(k-1)T/2^{r_n}})^{p_2-(l+m)}|\mathcal{I}_{i,n}^{(1)}\cap \mathcal{I}_{j,n}^{(2)}|^{(p_1+p_2-(k+l+m))/2}
\\&~~~~~~~~~~~ \times (x_2)^{p_1+p_2-(k+l+m)}(1-(\rho_{(k-1)T/2^{r_n}})^2)^{l/2} |\mathcal{I}_{i,n}^{(1)}\cap \mathcal{I}_{j,n}^{(2)}|^{l/2} (x_3)^{l}
\\&~~~~~~~~~~~ \times  |\mathcal{I}_{j,n}^{(2)} \setminus \mathcal{I}_{i,n}^{(1)}|^{m/2} (x_4)^{m} \Phi_{0,I_4}(dx) \mathds{1}_{\{\mathcal{I}_{i,n}^{(1)}\cap \mathcal{I}_{j,n}^{(2)} \neq \emptyset\}}
\\&~~~~~~~~~~+  K\sup_{s,t \in [0,T],|t-s|\leq 3|\pi_n|_T} |G_{p_1,p_2}^{n}(t)-G_{p_1,p_2}^{n}(s)|
\\&~~=(\sigma_{(k-1)T/2^{r_n}}^{(1)})^{p_1}(\sigma_{(k-1)T/2^{r_n}}^{(2)})^{p_2}\Big[\sum_{(k,l,m) \in L(p_1,p_2)} \binom{p_1}{k}\binom{p_2}{l,m}
 m_1(x^k) m_1(x^l)
 \\&~~~~~~~~~~~ \times m_1(x^m)m_1(x^{p_1+p_2-(k+l+m)}) (\rho_{(k-1)T/2^{r_n}})^{p_2-(l+m)}(1-\rho_{(k-1)T/2^{r_n}}^2)^{l/2} 
 \\&~~~~~~~~~~~ \times\Big(H^n_{k,m,p_1+p_2}(kT/2^{r_n})
 -H^n_{k,m,p_1+p_2}((k-1)T/2^{r_n}   \Big)\Big] 
\\&~~~~~~~~~~+  K\sup_{s,t \in [0,T],|t-s|\leq 3|\pi_n|_T} |G_{p_1,p_2}^{n}(t)-G_{p_1,p_2}^{n}(s)|
\end{align*}
where we used the multinomial theorem, $\mathbb{E}[X^k]=0$ for $X \sim \mathcal{N}(0,1)$, $k$ odd, and
\begin{small}
\begin{align*}
&\begin{pmatrix}
\Delta_{i,n}^{(1)}C^{(1)}(r)
\\ \Delta_{j,n}^{(2)}C^{(2)}(r)
\end{pmatrix}
\\&~\overset{\mathcal{L}_{\mathcal{G}_{k-1}^n}}{=}  
\begin{pmatrix}
\sigma_{(k-1)T/2^{r_n}}^{(1)}|\mathcal{I}_{i,n}^{(1)}\setminus \mathcal{I}_{j,n}^{(2)}|^{1/2} & 0
\\ \sigma_{(k-1)T/2^{r_n}}^{(1)}|\mathcal{I}_{i,n}^{(1)}\cap \mathcal{I}_{j,n}^{(2)}|^{1/2} & \rho_{(k-1)T/2^{r_n}} \sigma_{(k-1)T/2^{r_n}}^{(2)}|\mathcal{I}_{i,n}^{(1)}\cap \mathcal{I}_{j,n}^{(2)}|^{1/2}
\\ 0 &
 (1-(\rho_{(k-1)T/2^{r_n}})^2)^{1/2}\sigma_{(k-1)T/2^{r_n}}^{(2)}|\mathcal{I}_{i,n}^{(1)}\cap \mathcal{I}_{j,n}^{(2)}|^{1/2}
\\ 0 &  (\sigma_{(k-1)T/2^{r_n}}^{(2)})|\mathcal{I}_{j,n}^{(2)} \setminus \mathcal{I}_{i,n}^{(1)}|^{1/2}
\end{pmatrix}^*
U.
\end{align*}
\end{small}for $\mathcal{I}^{(1)}_{i,n} \cup \mathcal{I}_{j,n}^{(2)}\subset [(k-1)T/2^{r_n},kT/2^{r_n})$ where $U=(U_1,U_2,U_3,U_4)^*\sim \mathcal{N}(0,I_4)$-distributed and independent of $\mathcal{F}$ where $\mathcal{L}_{\mathcal{G}_{k-1}^n}$ denotes identity of the $\mathcal{G}_{k-1}^n$-conditional distributions. The rest of Step 2 from the proof of Theorem \ref{theo_norm_2d_uncor} also applies here without modification.
\end{proof}

\begin{proof}[Proof of Corollary \ref{cor_norm_almost_poshom}]
It suffices to prove
\begin{align*}
\overline{V}^*(p_1+p_2,f,\pi_n)_T-\overline{V}^*(p_1+p_2,\tilde{f},\pi_n)_T \overset{\mathbb{P}}{\longrightarrow} 0.
\end{align*}
For $\varepsilon>0$ pick $\delta>0$ such that $|L(x)-1|<\varepsilon$ for all $x$ with $\|x\| \in [0,\delta]$. Then it holds
\begin{align}\label{cor_norm_almost_poshom_proof1}
&\big| \overline{V}^*(p_1+p_2,f,\pi_n)_T-\overline{V}^*(p_1+p_2,\tilde{f},\pi_n)_T \big| \nonumber
\\&~=\big|n^{(p_1+p_2)/2-1}\sum_{i,j:t_{i,n}^{(1)} \vee t_{j,n}^{(2)} \leq T} (1-L(\Delta_{i,n}^{(1)}\widetilde{X}^{(1)},\Delta_{j,n}^{(2)}\widetilde{X}^{(2)} )) \nonumber
\\&~~~~~~~~~~~~~~~~~~~~~~~~~~~~~~~~~~~~~~~~~~~~~~~~~~\times f(\Delta_{i,n}^{(1)}\widetilde{X}^{(1)},\Delta_{j,n}^{(2)}\widetilde{X}^{(2)} )\mathds{1}^{*,n}_{d_1,d_2}(i,j) \big| \nonumber
\\&~ \leq \varepsilon \overline{V}^*(p_1+p_2,|f|,\pi_n)_T
+n^{(p_1+p_2)/2-1}\sum_{i,j:t_{i,n}^{(1)} \vee t_{j,n}^{(2)} \leq T} |1-L(\Delta_{i,n}^{(1)}\widetilde{X}^{(1)},\Delta_{j,n}^{(2)}\widetilde{X}^{(2)} )| \nonumber
\\&~~~~~~~~~~~~~\times|f(\Delta_{i,n}^{(1)}\widetilde{X}^{(1)},\Delta_{j,n}^{(2)}\widetilde{X}^{(2)} )|\mathds{1}_{\{\|(\Delta_{i,n}^{(1)}\widetilde{X}^{(1)},\Delta_{j,n}^{(2)}\widetilde{X}^{(2)} )\|>\delta\}}\mathds{1}^{*,n}_{d_1,d_2}(i,j).
\end{align}
The function $|f|:x \mapsto |f(x)|$ is positively homogeneous of degree $p_1$ in the first argument and positively homogeneous of degree $p_2$ in the second argument. Hence using Proposition \ref{lemma_norm_sync} and
\begin{align*}
&\mathbb{E}[\overline{C}^*(p_1+p_2,|f|,\pi_n)_T|\mathcal{S}]
\\&~~\leq  \sum_{i,j:t_{i,n}^{(1)} \vee t_{j,n}^{(2)} \leq T} K\mathbb{E}[|\Delta_{i,n}^{(1)}\widetilde{C}^{(1)}|^{p_1}|\Delta_{j,n}^{(2)}\widetilde{C}^{(2)} |^{p_2}|\mathcal{S}]\mathds{1}^{*,n}_{d_1,d_2}(i,j)
\\&~~\leq K \widetilde{G}^{(d_1,d_2),n}_{p_1,p_2}(T),
\end{align*}
which follows by \eqref{ineq_pos_hom_abs_2d}, we obtain $\overline{V}^*(p_1+p_2,|f|,\pi_n)_T=O_\mathbb{P}(1)$ as $n \rightarrow \infty$ and hence the first term in \eqref{cor_norm_almost_poshom_proof1} vanishes as $n \rightarrow \infty$ and then $\varepsilon \rightarrow 0$. If $\widetilde{X}$ is continuous, then the second term in \eqref{cor_norm_almost_poshom_proof1} converges almost surely to $0$ as $n \rightarrow \infty$. If $\widetilde{X}$ may be discontinuous we have $p_1+p_2<2$. We then denote by $\Omega(n,q,N,\delta)$ the set where $\|(\Delta_{i,n}^{(1)}\widetilde{X}^{(1)},\Delta_{j,n}^{(2)}\widetilde{X}^{(2)} )\|>\delta$ implies $\Delta_{i,n}^{(1)}N^{(1)}(q)\neq 0$ or $\Delta_{j,n}^{(2)}N^{(2)}(q)\neq 0$, it holds $\|\Delta \widetilde{X}_s \|\leq N$ for all $s \in [0,T]$ and $\|(\Delta_{i,n}^{(1)}\widetilde{X}^{(1)},\Delta_{j,n}^{(2)}\widetilde{X}^{(2)} )\|\leq 2N$ for all $i,j$ with $t_{i,n}^{(1)} \vee t_{j,n}^{(2)} \leq T$. On this set the second term in \eqref{cor_norm_almost_poshom_proof1} is by the local boundedness of $L$ bounded by
\begin{multline*}
n^{(p_1+p_2)/2-1}\sum_{i,j:t_{i,n}^{(1)} \vee t_{j,n}^{(2)} \leq T} K |f(\Delta_{i,n}^{(1)}\widetilde{X}^{(1)},\Delta_{j,n}^{(2)}\widetilde{X}^{(2)} )|
\\ \times\mathds{1}_{\{\|(\Delta_{i,n}^{(1)}\widetilde{N}^{(1)}(q),\Delta_{j,n}^{(2)}\widetilde{N}^{(2)}(q) )\| \neq \emptyset\}}\mathds{1}^{*,n}_{d_1,d_2}(i,j)
\end{multline*}
which vanishes due to $p_1+p_2<2$ using \eqref{ineq_pos_hom_abs_2d} and the arguments used for the discussion of \eqref{hoelder_trick} in the proof of Proposition \ref{lemma_norm_sync}. The proof is then finished by observing $\mathbb{P}(\Omega(n,q,N,\delta))\rightarrow 1$ as $n,q,N \rightarrow \infty$ for any $\delta >0$. 
\end{proof}

\bibliographystyle{chicago}
\bibliography{bibliography}

\end{document}